\documentclass[11pt,letterpaper,reqno]{amsart}

\usepackage{amssymb}
\usepackage{amsmath}
\usepackage{amsthm}
\usepackage{amsfonts}
\usepackage{bbm}
\usepackage{graphicx}
\usepackage[T1]{fontenc}
\usepackage{doi}
\addtolength{\hoffset}{-1.5cm}\addtolength{\textwidth}{3cm}
\addtolength{\voffset}{-1cm}\addtolength{\textheight}{2cm}

\usepackage{bookmark}
\usepackage{hyperref}
\hypersetup{pdfstartview={FitH}}

\newtheorem{thm}{Theorem}[section]
\newtheorem{lem}[thm]{Lemma}
\newtheorem{prop}[thm]{Proposition}
\newtheorem{cor}[thm]{Corollary}
\newtheorem{claim}{Claim}
\theoremstyle{definition}
\newtheorem{remark}[thm]{Remark}
\newtheorem{problem}[thm]{Problem}
\newtheorem{conj}[thm]{Conjecture}
\newtheorem{defn}[thm]{Definition}
\newtheorem{exm}[thm]{Example}
\numberwithin{equation}{section}

\newcommand{\seqnum}[1]{\href{https://oeis.org/#1}{\rm \underline{#1}}}

\begin{document}

\title[On a conjecture of Erd\H{o}s and Graham about the Sylvester's sequence]{On a conjecture of Erd\H{o}s and Graham about the Sylvester's sequence}

\author[Zheng Li]{Zheng Li}
\author[Quanyu Tang]{Quanyu Tang}

\address{School of Mathematics and Statistics, Xi'an Jiaotong University, Xi'an 710049, P. R. China}
\email{math.zhengli@gmail.com}

\address{School of Mathematics and Statistics, Xi'an Jiaotong University, Xi'an 710049, P. R. China}
\email{tang\_quanyu@163.com}

\subjclass[2020]{
Primary 11D68; Secondary 11D75, 11P99} 

\keywords{Sylvester's sequence, Unit fraction, Underapproximation, Greedy algorithm}

\begin{abstract}
Let $\{u_n\}_{n=1}^{\infty}$ be the Sylvester's sequence (sequence A000058 in the OEIS), and let $ a_1 < a_2 < \cdots $ be any other positive integer sequence satisfying $ \sum_{i=1}^\infty \frac{1}{a_i} = 1 $. In this paper, we solve a conjecture of Erd\H{o}s and Graham, which asks whether $$ \liminf_{n\to\infty} a_n^{\frac{1}{2^n}} < \lim_{n\to\infty} u_n^{\frac{1}{2^n}} = c_0 = 1.264085\ldots. $$ We prove this conjecture using a constructive approach. Furthermore, assuming that the unproven claim of Erd\H{o}s and Graham that "all rationals have eventually greedy best Egyptian underapproximations" holds, we establish a generalization of this conjecture using a non-constructive approach.
\end{abstract}

\maketitle

\tableofcontents

\section{Introduction and Main Results}\label{sec1}

The ancient Egyptians developed a distinctive approach to working with fractions, setting their system apart from those of other civilizations. They introduced specific symbols for \textit{unit fractions}, which are reciprocals of integers, such as \( \frac{1}{2}, \frac{1}{3}, \frac{1}{7} \), and so on. However, they lacked symbols for fractions with numerators greater than 1, with the sole exception of \( \frac{2}{3} \). Instead, they expressed such fractions as sums of unit fractions. Although a fraction like \( \frac{3}{53} \) could be represented trivially as \( \frac{1}{53} + \frac{1}{53} + \frac{1}{53} \), the Egyptians preferred representations without repeated terms and sought more concise expressions. Consequently, the term \textit{Egyptian fraction} refers to a representation of a fraction as a sum of \textit{distinct} unit fractions. For example, the number 1 can be expressed as \( \frac{1}{2} + \frac{1}{3} + \frac{1}{6} \).

Egyptian fractions give rise to many problems that are simple to state but challenging to solve, making them a long-standing area of interest for the late Paul Erd\H{o}s \cite[\S 4]{EG80}. Comprehensive survey articles on this topic have been authored by Graham \cite{Gra13} and by Bloom and Elsholtz \cite{BE22}.

In this paper, we primarily focus on representing a number using an Egyptian fraction with an infinite number of terms. This necessitates introducing concepts such as \textit{Egyptian underapproximation} and related notions. For a more detailed discussion, readers are referred to Kova\v{c} \cite{Kovac2025}.

For a positive integer $n$, we say that a  positive integer sequence $\{x_i\}_{i=1}^n$ is an \emph{$n$-term Egyptian underapproximation}\footnote{In other literature, such as Kova\v{c} \cite{Kovac2025} and Nathanson \cite{Nat23}, the definition of Egyptian underapproximation varies: Kova\v{c} defines it as the sum \( \sum_{i=1}^{n} 1/x_i \), while Nathanson defines it as the \( n \)-tuple \( (x_1, x_2, \ldots, x_n) \).} of a rational number $0 < \lambda \leq 1$ if
\begin{equation}\label{eq:under}
\sum_{i=1}^{n} \frac{1}{x_i} < \lambda ,
\end{equation}
where $x_1\leq x_2 \leq \cdots \leq x_n$. We also say a sequence \( \{x_i\}_{i=1}^{\infty} \) is an \emph{infinite Egyptian underapproximation} of \( 0 < \lambda \leq 1 \) if, for every positive integer \( m \), the finite subsequence \( \{x_i\}_{i=1}^{m} \) is an \emph{\( m \)-term Egyptian underapproximation} of \( \lambda \).

A practical approach to approximating a number $0 < \lambda \leq 1$ from below using sums of distinct unit fractions is the greedy algorithm. We define the \emph{greedy \( n \)-term Egyptian underapproximation} of $0 < \lambda \leq 1$ as the sequence $\{x_i\}_{i=1}^n$ determined recursively by \[
x_i := \Big\lfloor \Big(\lambda - \sum_{j=1}^{i-1} \frac{1}{x_j} \Big)^{-1} \Big\rfloor + 1,
\] with the convention that the empty sum \( \sum_{j=1}^{0} \) is taken as \( 0 \). We also define the \emph{infinite greedy Egyptian underapproximation} of \( 0 < \lambda \leq 1 \) as the sequence \( \{x_i\}_{i=1}^{\infty} \) if, for every positive integer \( m \), the finite subsequence \( \{x_i\}_{i=1}^{m} \) is the \emph{greedy \( m \)-term Egyptian underapproximation} of \( \lambda \).

A well-known example is precisely the so-called Sylvester's sequence.
\begin{exm}[Sylvester's Sequence]
Let the sequence \( \{u_n\}_{n=1}^{\infty} \) be the infinite greedy Egyptian underapproximation of 1. That is, \( \{u_n\}_{n=1}^{\infty} \) is defined such that its \( n \)th term \( u_n \) corresponds to the denominator of the \( n \)th term in the following infinite Egyptian fraction representation of 1:
\[
1 = \frac{1}{2} + \frac{1}{3} + \frac{1}{7} + \frac{1}{43} + \frac{1}{1807} + \frac{1}{3263443} + \frac{1}{10650056950807} + \cdots.
\]
We refer to the sequence \( \{u_n\}_{n=1}^{\infty} \) as the \textit{Sylvester’s sequence} \cite{Sylvester1880}, which is defined recursively by the following rule: \[
u_1 = 2, \quad \text{and} \quad u_{n+1} = \prod_{k=1}^{n} u_k + 1, \quad \text{for all } n \geq 1.
\] Sylvester’s sequence is sequence \seqnum{A000058} in the OEIS \cite{OEIS}.
\end{exm}
\begin{remark}
Instead of defining the Sylvester’s sequence \( \{u_n\}_{n=1}^{\infty} \) in terms of all previous terms, \( u_{n+1} \) can be expressed solely in terms of \( u_n \) as follows for \( n \geq 1 \): \[
u_{n+1} = 1 + \prod_{k=1}^{n} u_k = 1 + u_n \prod_{k=1}^{n-1} u_k = 1 + u_n(u_n - 1) = u_n^2 - u_n + 1.
\] Furthermore, it is straightforward to verify that  
\[
\sum_{i=1}^{n-1} \frac{1}{u_i} + \frac{1}{u_n - 1} = 1, \quad \text{for } n \geq 1,
\]
with the convention that the empty sum \( \sum_{k=1}^{0} \) is taken as \( 0 \).
\end{remark}
For additional properties of the Sylvester’s sequence, readers may refer to \cite{chentoufsylvester, chun2011egyptian}. In this paper, we primarily focus on the following conjecture proposed by Erd\H{o}s and Graham in \cite[p.~41]{EG80}:
\begin{conj}\label{conj1}
Let \( u_1 = 2 \) and \(
u_{n+1} = u_n^2 - u_n + 1.
\)\footnote{In \cite{EG80}, this problem is stated with the sequence \( u_1 = 1 \) and \( u_{n+1} = u_n (u_n + 1) \), but this is an obvious typo, since with that choice we do not have \(\sum_{i=1}^{\infty} \frac{1}{u_i} = 1.\) This question with the Sylvester's sequence is the most natural interpretation of what they meant to ask.
} Let \( a_1 < a_2 < \cdots \) be any other positive integer sequence with \[
\sum_{i=1}^\infty \frac{1}{a_i} = 1.
\] Then \[
\liminf_{n\to\infty} a_n^{\frac{1}{2^n}} < \lim_{n\to\infty} u_n^{\frac{1}{2^n}} = c_0\footnote{$c_0$ is called the Vardi constant, whose decimal expansion corresponds to sequence \seqnum{A076393} in the OEIS \cite{OEIS}. See also \cite[p.~444]{Finch2003}.} = 1.264085\ldots.
\]
\end{conj}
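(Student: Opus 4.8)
\emph{Plan.} I would prove the contrapositive: if $\{a_i\}_{i\ge 1}$ is a strictly increasing sequence of positive integers with $\sum_{i\ge 1}1/a_i=1$ and $\liminf_{n\to\infty}a_n^{1/2^n}\ge c_0$, then $a_i=u_i$ for every $i$ (so the only sequence attaining the bound is Sylvester's, and every other one lies strictly below). First I would record three elementary facts: (a) the refined asymptotic $u_n=c_0^{2^n}(1+O(c_0^{-2^n}))$, obtained by iterating $\ln u_{n+1}=2\ln u_n+\ln(1-u_n^{-1}+u_n^{-2})$ and summing the resulting rapidly convergent error series, so that $r_n:=1-\sum_{i\le n}1/u_i=1/(u_{n+1}-1)\asymp c_0^{-2^{n+1}}$; (b) for $\lambda=1$ the greedy $n$-term Egyptian underapproximation $(u_1,\dots,u_n)$ is the \emph{unique} maximizer of $\sum_{i\le n}1/x_i$ among all $n$-term underapproximations of $1$ — this is an easy induction exploiting the self-similarities $\tfrac12=\sum_{i\ge 2}1/u_i$, $\tfrac16=\sum_{i\ge 3}1/u_i$, and so on; (c) under the hypothesis, for every $\varepsilon>0$ and all large $n$ one has $\rho_n:=\sum_{i>n}1/a_i\le\sum_{i>n}(c_0-\varepsilon)^{-2^i}\le 2(c_0-\varepsilon)^{-2^{n+1}}$.

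Next I would locate the first deviation. Suppose $\{a_i\}\ne\{u_i\}$ and let $k$ be least with $a_k\ne u_k$. Since $a_i=u_i$ for $i<k$ we get $\sum_{i\ge k}1/a_i=1/(u_k-1)$, hence $1/a_k<1/(u_k-1)$, so $a_k\ge u_k$ and therefore $a_k\ge u_k+1$. I would measure the lag by $\gamma_n:=(u_{n+1}-1)\rho_n$, which equals $1$ precisely for Sylvester's sequence; using $u_{n+2}-1=u_{n+1}(u_{n+1}-1)$ one gets the exact recursion
\[
\gamma_{n+1}=\gamma_n u_{n+1}-\frac{u_{n+1}(u_{n+1}-1)}{a_{n+1}}=u_{n+1}\cdot\frac{\gamma_n a_{n+1}-(u_{n+1}-1)}{a_{n+1}} .
\]
The deviation gives $\gamma_k\ge 2u_k/(u_k+1)>1$, and in fact $\gamma_m>1$ for all $m\ge k$: by (b), $\gamma_m=1$ would force $(a_1,\dots,a_m)=(u_1,\dots,u_m)$, contradicting $a_k\ne u_k$, while $\gamma_m<1$ would make $\sum_{i\le m}1/a_i$ exceed the maximum, which is impossible.

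The crux is to show that a positive lag blows up doubly exponentially: $\gamma_n\ge\kappa^{\,2^{\,n-k}}$ for all $n\ge k$, for some fixed $\kappa>1$ (heuristically $\gamma_{n+1}\approx\gamma_n^2$). The mechanism is \textbf{integrality}. From the recursion, as $a_{n+1}$ runs through the admissible integers the attainable values of $\gamma_{n+1}$ form an increasing sequence whose consecutive gaps near $\gamma_{n+1}=1$ are $\asymp\gamma_n^2$, so the smallest attainable value exceeding $1$ lies in $(1,\,1+C\gamma_n^2]$. Consequently, keeping $\gamma_n\le\Gamma$ for $m$ further steps would force $\gamma_n\le\Gamma_m$, where $\Gamma_0=\Gamma$ and $\Gamma_{j+1}$ is the largest value with $1+C\Gamma_{j+1}^2\le\Gamma_j$ (roughly $\Gamma_{j+1}=\sqrt{(\Gamma_j-1)/C}$); this sequence drops below $1$ after finitely many steps, contradicting $\gamma_n>1$. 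Hence $\gamma_n\to\infty$ at the stated rate, and then $\rho_n=\gamma_n r_n\gg\kappa^{\,2^{n-k}}c_0^{-2^{n+1}}$, which for a suitably small fixed $\varepsilon$ (depending only on $k$ and $\kappa$) exceeds $2(c_0-\varepsilon)^{-2^{n+1}}$ for all large $n$, contradicting (c). Equivalently — and this is the ``constructive'' reading — one exhibits directly infinitely many $n$ with $a_{n+1}\le (u_{n+1}-1)/\gamma_n$, so that $a_{n}^{1/2^{n}}\le c_0\,\gamma_{n-1}^{-1/2^{n}}$, whose limit is $<c_0$; either way $\liminf_n a_n^{1/2^n}<c_0$, completing the contrapositive.

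The main obstacle is precisely this blow-up step. The dangerous scenario is a sequence that \emph{shadows} Sylvester's, choosing $a_{n+1}$ just below $u_{n+1}$ at every stage so that $\gamma_n$ stays bounded; by the computation above a bounded lag forces $a_n^{1/2^n}\to c_0$, so such a sequence would contradict the conjecture and must be excluded. One cannot rule it out by size estimates alone, since for any fixed $\varepsilon>0$ the bounds $r_n\le\rho_n\le 2(c_0-\varepsilon)^{-2^{n+1}}$ are mutually consistent; the argument must exploit that each $a_{n+1}$ is an integer and that the denominators of the $\gamma_n$ grow, so the ``near-resonance'' condition $\gamma_n a_{n+1}\approx u_{n+1}-1$ cannot be met indefinitely. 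Making this quantitative and uniform over all admissible tails $a_k<a_{k+1}<\cdots$ with $\sum_{i\ge k}1/a_i=1/(u_k-1)$ and $a_k\ge u_k+1$ is the technical heart of the matter.
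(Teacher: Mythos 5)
Your reduction steps are fine (the first-deviation bound $a_k\ge u_k+1$, the recursion for $\gamma_n=(u_{n+1}-1)\rho_n$, the fact that $\gamma_m>1$ for $m\ge k$ via uniqueness of the greedy best $m$-term underapproximation of $1$, and the bookkeeping showing that $\gamma_n\ge\kappa^{2^{n-k}}$ would contradict $\liminf a_n^{1/2^n}\ge c_0$). But the proof has a genuine gap at exactly the step you call the crux: the doubly exponential blow-up of $\gamma_n$, equivalently the exclusion of a ``shadowing'' tail with $\gamma_n$ bounded, is asserted but not proved, and the mechanism you sketch does not deliver it. Knowing that the attainable values of $\gamma_{n+1}=u_{n+1}\bigl(\gamma_n-(u_{n+1}-1)/a_{n+1}\bigr)$ are spaced $\asymp\gamma_n^2$ apart near $1$ only gives an \emph{upper} bound on the least attainable value exceeding $1$ (it lies in $(1,1+C\gamma_n^2]$); your subsequent deduction ``$\gamma_{n+1}\le\Gamma_j$ forces $\gamma_n\le\Gamma_{j+1}$'' secretly uses the \emph{lower} bound $\gamma_{n+1}\ge 1+c\gamma_n^2$, which does not follow: an attainable value can in principle land at $1+\delta$ with $\delta$ far smaller than $\gamma_n^2$, depending on the arithmetic of the rational $\gamma_n$, and ruling out this near-resonance for all admissible integer tails is precisely what you concede in your last paragraph is ``the technical heart of the matter.'' So the contrapositive is not established. (A minor additional slip: $a_{n+1}\le(u_{n+1}-1)/\gamma_n$ can never hold, since $1/a_{n+1}<\rho_n$; you need a constant, e.g. infinitely many $n$ with $a_{n+1}\le 2(u_{n+1}-1)/\gamma_n$, which does follow from the doubly exponential decay of $\rho_n$ — but this is downstream of the missing step.)

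For contrast, the paper's proof never needs to control $\gamma_n$ for the given sequence $\{a_n\}$ at all, and in particular never needs the integrality/resonance analysis. Instead it builds, from the first deviation index $m$, an explicit comparison sequence $\{c_n\}$ with $\sum 1/c_i=1$ which agrees with $\{u_n\}$ up to $m-1$, uses three hand-chosen terms (e.g. $c_m=u_m+1$, $1/c_{m+1}=2/u_m^2$, \dots) so that the remainder is again a unit fraction, and then continues greedily, making $\{c_n\}$ eventually Sylvester; a Nathanson-type product-versus-sum comparison shows $\sum_{i\le k}1/a_i\le\sum_{i\le k}1/c_i$ for all $k\ge m$, whence $\liminf a_n^{1/2^n}\le\lim c_n^{1/2^n}$ by a short contradiction argument. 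The strict inequality $\lim c_n^{1/2^n}<c_0$ then comes from a strict monotonicity lemma for the limit of Sylvester-type recurrences as a function of the starting value (via $f(x)=\log x+\sum_j 2^{-j}\log(1+1/g^{(j)}(x)^2)$ being strictly increasing). If you want to rescue your route, you would either have to prove the quantitative non-resonance statement you identified, or switch to a comparison-sequence argument of the paper's type, which sidesteps it.
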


This conjecture is listed as Problem \#315 on Thomas Bloom's Erd\H{o}s Problems website \cite{EP}.

Inspired by the Sylvester's sequence, we introduce the following concept:

\begin{defn}\label{def1}
Let \(\{a_n\}_{n=1}^{\infty}\) be a sequence of positive integers. We say that \(\{a_n\}_{n=1}^{\infty}\) is \textit{eventually Sylvester} if there exists a positive integer \(N > 0\) such that for all \(n \geq N\), the sequence satisfies the following recurrence relation:\footnote{Badea \cite{Badea1995} recorded several sufficient conditions for a sequence to be eventually Sylvester, such as \cite[Theorem 3--5]{Badea1995}.}
    \[
    a_{n+1} = a_n^2 - a_n + 1.
    \]
\end{defn}

This paper aims to provide proofs of Conjecture \ref{conj1} and its generalization: one constructive and one non-constructive.

\subsection{Constructive Method} To resolve Conjecture \ref{conj1} using a constructive method, we first need to establish the following result:
\begin{thm}\label{mainthm}
Let \( \{u_n\}_{n=1}^{\infty} \) be the Sylvester’s sequence, and let \( N \geq 2 \) be a positive integer. Let \( \{a_n\}_{n=1}^{\infty} \) be any eventually Sylvester sequence of positive real numbers satisfying the recurrence relation  
\[
a_{n+1} = a_n^2 - a_n + 1, \quad \text{for } n \geq N.
\]
Suppose that \( \{a_n\}_{n=1}^{\infty} \) also satisfies the following two conditions:  
\begin{itemize}
    \item[(*)] \(\sum_{i=1}^\infty \frac{1}{a_i} = 1\);
    \item[(**)] \(\sum_{i=1}^{N-1} \frac{1}{a_i} < \sum_{i=1}^{N-1} \frac{1}{u_i}\).
\end{itemize}
Then
\[
\liminf_{n\to\infty} a_n^{\frac{1}{2^n}} = \lim_{n\to\infty} a_n^{\frac{1}{2^n}}< \lim_{n\to\infty} u_n^{\frac{1}{2^n}}.
\]
\end{thm}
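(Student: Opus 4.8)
The plan is to reduce everything to a term‑by‑term comparison of the two sequences from index $N$ onward, followed by a short amplification argument for the logarithmic ratio.

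\emph{Step 1 (normalising the tails).} From the recurrence one has $x_{n+1}-1=x_n(x_n-1)$, valid for $x=u$ (all $n$) and for $x=a$ ($n\geq N$), whence the telescoping identity $\tfrac1{x_n}=\tfrac1{x_n-1}-\tfrac1{x_{n+1}-1}$. I would first note that (*) forces $1/a_i\to 0$, so $a_n\to\infty$; and since $N\geq 2$, $1/a_N\leq 1-1/a_1<1$, so $a_N>1$, hence (as $a_{n+1}-a_n=(a_n-1)^2$) the sequence $\{a_n\}_{n\geq N}$ is strictly increasing with $a_n>1$. Summing the telescoping identity from $N$ to $\infty$ gives $\sum_{i=N}^\infty \tfrac1{a_i}=\tfrac1{a_N-1}$ and $\sum_{i=N}^\infty \tfrac1{u_i}=\tfrac1{u_N-1}$; combined with (*) and $\sum_i 1/u_i=1$ this yields $\tfrac1{a_N-1}=1-\sum_{i=1}^{N-1}\tfrac1{a_i}$ and $\tfrac1{u_N-1}=1-\sum_{i=1}^{N-1}\tfrac1{u_i}$, so condition (**) is \emph{equivalent} to $a_N<u_N$. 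Since $f(x)=x^2-x+1$ is strictly increasing on $[1,\infty)$ and both sequences satisfy $x_{n+1}=f(x_n)$ for $n\geq N$, induction gives $1<a_n<u_n$ for every $n\geq N$.

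\emph{Step 2 (the limit exists).} Put $\phi(x)=1-x^{-1}+x^{-2}=(x^2-x+1)/x^2$, so $\phi(x)\in[\tfrac34,1]$ for $x\geq 1$ and $\log x_{n+1}=2\log x_n+\log\phi(x_n)$. Dividing by $2^{n+1}$ gives, for $n\geq N$, the identity $\tfrac{\log a_{n+1}}{2^{n+1}}=\tfrac{\log a_n}{2^n}+\tfrac{\log\phi(a_n)}{2^{n+1}}$; since $|\log\phi(a_n)|\leq\log\tfrac43$, the series $\sum_n 2^{-n-1}\log\phi(a_n)$ converges absolutely, so $\lim_n 2^{-n}\log a_n$ exists, i.e.\ $\lim_n a_n^{1/2^n}$ exists and equals $\liminf_n a_n^{1/2^n}$. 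The same computation applied to $\{u_n\}$ reconfirms that $c_0=\lim_n u_n^{1/2^n}$ exists.

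\emph{Step 3 (the strict gap).} Set $D_n=\log u_n-\log a_n$ for $n\geq N$; by Step 1, $D_n>0$. Subtracting the two recurrences, $D_{n+1}=2D_n+\bigl(\log\phi(u_n)-\log\phi(a_n)\bigr)$. Because $a_n\to\infty$ and $\{a_n\}_{n\geq N}$ is increasing, there is $M\geq N$ with $a_n>2$ for all $n\geq M$; on $(2,\infty)$ the function $\phi$ is strictly increasing, and $2<a_n<u_n$ there, so $\log\phi(u_n)-\log\phi(a_n)>0$, giving $D_{n+1}\geq 2D_n$ for $n\geq M$. Hence $D_n\geq 2^{n-M}D_M$, i.e.\ $2^{-n}D_n\geq 2^{-M}D_M>0$ for all $n\geq M$. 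Letting $n\to\infty$ (both $2^{-n}\log u_n$ and $2^{-n}\log a_n$ converge by Step 2), $\log c_0-\lim_n 2^{-n}\log a_n=\lim_n 2^{-n}D_n\geq 2^{-M}D_M>0$, which is precisely $\lim_n a_n^{1/2^n}<c_0$; together with Step 2 this is the assertion.

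The only delicate point is upgrading the conclusion from $\leq$ to $<$: term‑by‑term domination $a_n<u_n$ alone only gives $a_n^{1/2^n}\leq u_n^{1/2^n}$ in the limit, and naively it could happen that the two growth constants coincide. The resolution is the observation in Step 3 that past the index $M$ where both sequences exceed $2$, monotonicity of $\phi$ forces the log‑ratio $D_n$ to at least double each step, so the normalised ratio $2^{-n}D_n$ is bounded below by the fixed positive constant $2^{-M}D_M$. Everything else — the telescoping identities, $a_n>1$, $a_n\to\infty$, the absolute convergence in Step 2 — is routine bookkeeping.
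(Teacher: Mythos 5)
Your proof is correct, and while its skeleton (reduce (**) to $a_N<u_N$, show the limit exists by taking $2^{-n}\log$, then separate the two limits) matches the paper's, the decisive step is handled by a genuinely different and more elementary device. For $a_N<u_N$ you use the telescoping identity $\sum_{i\geq N}1/a_i=1/(a_N-1)$ to get an outright equivalence with (**), where the paper argues by contradiction from termwise domination --- same computation, cleaner packaging. For existence of the limit, your identity $2^{-(n+1)}\log x_{n+1}=2^{-n}\log x_n+2^{-(n+1)}\log\phi(x_n)$ with $\phi(x)=(x^2-x+1)/x^2\in[\tfrac34,1]$ is the same telescoping idea as the paper's Lemma 2.10, which instead normalizes via $c_n-\tfrac12$ and $z_n=2c_n-1$. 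The real divergence is the strictness: the paper proves a separate monotonicity lemma (Lemma 2.9), showing that $x\mapsto\log x+\sum_j2^{-j}\log\bigl(1+1/g^{(j)}(x)^2\bigr)$ is strictly increasing on $[3,\infty)$ by differentiating the infinite series (dominated convergence) and an induction establishing $h_j(x)>0$, then applies it to the shifted sequences $2a_{n+N-1}-1<2u_{n+N-1}-1$. You instead track $D_n=\log u_n-\log a_n$ and observe that once both sequences exceed $2$ (where $\phi'>0$) one has $D_{n+1}\geq 2D_n$, so $2^{-n}D_n\geq 2^{-M}D_M>0$, which separates the limits with an explicit quantitative gap and no differentiation under an infinite sum. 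The paper's route buys a reusable general lemma (it is invoked again in the non-constructive part); yours buys brevity and an effective lower bound on $\log c_0-\log\lim a_n^{1/2^n}$. A small remark: since $1/(a_N-1)=1-\sum_{i=1}^{N-1}1/a_i<1$ forces $a_N>2$, you could take $M=N$ and skip the ``eventually $>2$'' step, but as written the argument is already complete.
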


To fully prove Conjecture \ref{conj1}, it remains to establish the following constructive result.

\begin{thm}\label{mainthm0}
Let \( \{u_n\}_{n=1}^{\infty} \) be the Sylvester’s sequence, and let \( N \geq 2 \) be a positive integer. Let \( a_1 < a_2 < \cdots \) be any other sequence of positive integers satisfying  
\[
\sum_{i=1}^\infty \frac{1}{a_i} = 1.
\]
Then, we can construct an eventually Sylvester sequence \( \{c_n\}_{n=1}^{\infty} \) of positive real numbers that satisfies the recurrence relation  
\[
c_{n+1} = c_n^2 - c_n + 1, \quad \text{for } n \geq N,
\]
and the following two conditions:
\begin{itemize}
    \item[(*)] \(\sum_{i=1}^\infty \frac{1}{c_i} = 1\);
    \item[(**)] \(\sum_{i=1}^{N-1} \frac{1}{c_i} < \sum_{i=1}^{N-1} \frac{1}{u_i}\).
\end{itemize}
Moreover, this sequence satisfies
\[
\liminf_{n\to\infty} a_n^{\frac{1}{2^n}} \leq \lim_{n\to\infty} c_n^{\frac{1}{2^n}}.
\]
\end{thm}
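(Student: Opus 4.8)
The plan is to separate an easy constructive part from the substance, which boils down to the single inequality $\liminf_n a_n^{1/2^n}<c_0$. For the construction: for any real $c_N>1$, an eventually Sylvester sequence with $c_{n+1}=c_n^2-c_n+1$ for $n\ge N$ has $c_n\to\infty$ and, by telescoping the identity $\frac1{c_n-1}-\frac1{c_{n+1}-1}=\frac1{c_n}$, satisfies $\sum_{i\ge N}1/c_i=\frac1{c_N-1}$. So if $c_1,\dots,c_{N-1}$ are positive reals with $\sigma:=\sum_{i=1}^{N-1}1/c_i<1$ and we put $c_N:=1+(1-\sigma)^{-1}$, then $\sum_{i=1}^\infty 1/c_i=1$ (giving (*)), and (**) amounts to $\sigma<1-\frac1{u_N-1}$, i.e.\ $c_N<u_N$. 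Also $\lim_n c_n^{1/2^n}$ exists (telescope $\log c_n/2^n$, whose increments are summable since $c_n\to\infty$) and, as a function $g(c_N)$ of $c_N$ alone, is continuous and strictly increasing with $g(u_N)=c_0$ and $g(c_N)\to c_0^{1/2^{N-1}}$ as $c_N\downarrow 2$ (the orbit then degenerates to a shift of Sylvester's sequence); hence $g$ carries $(2,u_N)$ onto $(c_0^{1/2^{N-1}},c_0)$, and for any prescribed $\beta$ in that interval one may take $c_N=g^{-1}(\beta)<u_N$ together with, e.g., $c_1=\dots=c_{N-1}=(N-1)\bigl(1-\tfrac1{c_N-1}\bigr)^{-1}$.

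Granting $\alpha:=\liminf_n a_n^{1/2^n}<c_0$, the interval $\bigl(\max(\alpha,c_0^{1/2^{N-1}}),c_0\bigr)$ is nonempty; picking $\beta$ there and running the construction yields an eventually Sylvester $\{c_n\}$ with (*), (**), and $\liminf_n a_n^{1/2^n}=\alpha<\beta=\lim_n c_n^{1/2^n}$, which is the assertion (and $\beta<c_0$ reconfirms (**)). So the whole theorem reduces to proving $\liminf_n a_n^{1/2^n}<c_0$ for an arbitrary sequence of distinct positive integers with $\sum_i 1/a_i=1$ — which, combined with Theorem \ref{mainthm} applied to the constructed $c$, is exactly Conjecture \ref{conj1}.

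For the inequality $\liminf_n a_n^{1/2^n}<c_0$ I would begin with the optimality of Sylvester's prefix: among all $k$-term Egyptian underapproximations of $1$ the largest is $\sum_{i\le k}1/u_i=1-\frac1{u_{k+1}-1}$, attained only by $\{u_1,\dots,u_k\}$. As $\{a_i\}\neq\{u_i\}$, this forces $\sum_{i\le k}1/a_i<1-\frac1{u_{k+1}-1}$ for every $k$, and rules out "$a_i\ge u_i$ for all $i>k$'' (which would reinstate $a=u$), so $a_i\le u_i-1$ for infinitely many $i$; since $(u_i-1)^{1/2^i}$ increases to $c_0$ this already gives $\liminf_n a_n^{1/2^n}\le c_0$. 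To make it strict I would track the deficit $\delta_k:=\bigl(1-\tfrac1{u_{k+1}-1}\bigr)-\sum_{i\le k}1/a_i\ge0$ and its normalization $\rho_k:=\delta_k/r_k^{(u)}$, where $r_k^{(u)}:=\frac1{u_{k+1}-1}=\sum_{i>k}1/u_i$: one has $\delta_k\to0$, $\delta_{k^*}>0$ at the first deviation index $k^*$ (where necessarily $a_{k^*}\ge u_{k^*}+1$), and, using $r_{k+1}^{(u)}=r_k^{(u)}/u_{k+1}$,
\[
\rho_{k+1}=u_{k+1}\Bigl(\rho_k+\tfrac{u_{k+1}-1}{u_{k+1}}-\tfrac{u_{k+1}-1}{a_{k+1}}\Bigr).
\]
A step with $a_{k+1}\ge u_{k+1}$ inflates $\rho$ by a factor $\asymp u_{k+1}$, whereas the constraint $\rho_{k+1}\ge0$ forces any step that pulls $\rho$ back down to have $a_{k+1}\lesssim u_{k+1}/(1+\rho_k)$; the goal is to show $\rho_k$ must be super-polynomially large in $u_{k+1}$ just before such a step for infinitely many $k$, which yields $a_{k+1}\le c_0^{(1-c)2^{k+1}}$ along an infinite set of $k$ for some fixed $c>0$, hence $\liminf_n a_n^{1/2^n}\le c_0^{\,1-c}<c_0$.

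The main obstacle is precisely this last point: upgrading the free qualitative fact "$a$ falls below Sylvester infinitely often'' to the quantitative "$a$ falls below Sylvester by a super-polynomial factor infinitely often.'' The awkward scenario to exclude is a sequence maintaining $\delta_k\asymp r_k^{(u)}$, so that $\rho_k$ stays bounded and $a_i\sim u_i/(1+\rho)$ yields only $\liminf_n a_n^{1/2^n}=c_0$; showing this cannot persist should exploit that the $a_i$ are \emph{distinct integers} with $\sum_i 1/a_i=1$ exactly, which forbids the delicate balancing the $\rho_k$-recursion would otherwise require.
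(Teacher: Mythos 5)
Your construction of an eventually Sylvester sequence with prescribed limit is fine as far as it goes: the telescoping identity $\frac{1}{c_n-1}-\frac{1}{c_{n+1}-1}=\frac{1}{c_n}$ does give $\sum_{i\geq N}1/c_i=\frac{1}{c_N-1}$, and choosing $c_N<u_N$ with a suitable real prefix yields (*) and (**), with $\lim_n c_n^{1/2^n}=g(c_N)$ ranging over $(c_0^{1/2^{N-1}},c_0)$. But the substance of the theorem is the final inequality $\liminf_n a_n^{1/2^n}\leq\lim_n c_n^{1/2^n}$, and you obtain it only by first assuming $\alpha:=\liminf_n a_n^{1/2^n}<c_0$ so that a $\beta\in(\max(\alpha,c_0^{1/2^{N-1}}),c_0)$ exists. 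That assumption is exactly Conjecture \ref{conj1}, which is what Theorem \ref{mainthm0} (combined with Theorem \ref{mainthm}) is designed to prove; and your sketch of a direct proof of $\alpha<c_0$ is not a proof. The distinct-integer argument gives only the non-strict bound $\alpha\leq c_0$ (via $a_i\leq u_i-1$ infinitely often), and you yourself flag that the decisive quantitative step --- showing the normalized deficit $\rho_k$ must be super-polynomially large in $u_{k+1}$ infinitely often, so as to exclude the scenario $\delta_k\asymp r_k^{(u)}$, $\rho_k$ bounded, $\alpha=c_0$ --- is an open obstacle, not an established lemma. If $\alpha=c_0$ your interval for $\beta$ is empty and the construction cannot be run, so the proposal as written does not prove the statement.

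The paper's route avoids this impasse entirely by tailoring $c$ to $a$ rather than to a target limit. It keeps $c_i=u_i=a_i$ up to the first index $m$ where $a_m>u_m$, inserts three explicit terms $c_m,c_{m+1},c_{m+2}$ (e.g.\ $u_m+1$, $u_m^2/2$, $(u_m^2(u_m^2-1)+2)/2$ for $m>1$) chosen so that the remainder $\frac{1}{u_m-1}-\frac{1}{c_m}-\frac{1}{c_{m+1}}-\frac{1}{c_{m+2}}$ is again a unit fraction and (**) holds, and continues greedily, making $c$ eventually Sylvester. The key lemma (proved by induction with Nathanson's product-versus-sum comparison, Proposition \ref{lemNathanson}) is the prefix domination $\sum_{i\leq k}1/a_i\leq\sum_{i\leq k}1/c_i$ for all $k\geq m$; then if one had $\liminf_n a_n^{1/2^n}>\lim_n c_n^{1/2^n}$, eventually $a_n>c_n$, so the tail of $\sum 1/a_i$ is strictly smaller than the tail of $\sum 1/c_i$, giving $\sum_i 1/a_i<1$, a contradiction. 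This yields $\liminf_n a_n^{1/2^n}\leq\lim_n c_n^{1/2^n}$ with no a priori information about $\liminf_n a_n^{1/2^n}$ versus $c_0$; the strict separation from $c_0$ is then delegated to Theorem \ref{mainthm} via condition (**). Some version of this ``compare $a$ termwise/prefixwise to a $c$ built from $a$'' idea is the missing ingredient in your proposal.
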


As a direct consequence, we obtain the constructive proof of Conjecture \ref{conj1}.
\begin{cor}\label{maincor}
    Conjecture \ref{conj1} is ture.
\end{cor}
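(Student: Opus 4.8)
The plan is to obtain Corollary \ref{maincor} by a direct concatenation of Theorems \ref{mainthm0} and \ref{mainthm}, with no further analytic input required. Fix an arbitrary increasing sequence of positive integers $a_1 < a_2 < \cdots$ with $\sum_{i=1}^\infty 1/a_i = 1$ that is not the Sylvester sequence itself (the word ``other'' in Conjecture \ref{conj1} excludes $\{u_n\}$, for which $\liminf_n u_n^{1/2^n} = c_0$ and the claimed strict inequality obviously fails). Also fix any integer $N \geq 2$.

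First I would invoke Theorem \ref{mainthm0} with this $N$ and this sequence $\{a_n\}$. It supplies an eventually Sylvester sequence $\{c_n\}_{n=1}^\infty$ of positive reals obeying $c_{n+1} = c_n^2 - c_n + 1$ for all $n \geq N$, satisfying the two side conditions
\[
\sum_{i=1}^\infty \frac{1}{c_i} = 1 \qquad\text{and}\qquad \sum_{i=1}^{N-1} \frac{1}{c_i} < \sum_{i=1}^{N-1} \frac{1}{u_i},
\]
and comparing to the original sequence via $\liminf_{n\to\infty} a_n^{1/2^n} \leq \lim_{n\to\infty} c_n^{1/2^n}$.

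Next I would observe that $\{c_n\}$ meets exactly the hypotheses of Theorem \ref{mainthm} — it is an eventually Sylvester sequence of positive reals whose recurrence starts at the same index $N$, and it satisfies $(*)$ and $(**)$ — so that theorem applies verbatim and gives $\lim_{n\to\infty} c_n^{1/2^n} < \lim_{n\to\infty} u_n^{1/2^n} = c_0$. Chaining this with the comparison inequality from the previous step yields
\[
\liminf_{n\to\infty} a_n^{1/2^n} \;\leq\; \lim_{n\to\infty} c_n^{1/2^n} \;<\; c_0 ,
\]
which is precisely the statement of Conjecture \ref{conj1}, and hence of Corollary \ref{maincor}.

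I do not anticipate a genuine obstacle here, since Theorems \ref{mainthm} and \ref{mainthm0} carry all the weight; the only points to check are bookkeeping items — that the value of $N$ used in the two invocations agree, that the output of Theorem \ref{mainthm0} is a legitimate input for Theorem \ref{mainthm} (neither statement requires monotonicity of the $c_n$, only positivity), and that $\lim_{n\to\infty} c_n^{1/2^n}$ genuinely exists for an eventually Sylvester sequence so that the inequality produced by Theorem \ref{mainthm0} is meaningful. The last of these is the familiar fact that the recursion $c_{n+1} = c_n^2 - c_n + 1$ forces $(\log c_n)/2^n$, and therefore $c_n^{1/2^n}$, to converge.
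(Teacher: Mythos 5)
Your proposal is correct and matches the paper's own proof: Corollary \ref{maincor} is obtained exactly by applying Theorem \ref{mainthm0} to produce the eventually Sylvester sequence $\{c_n\}$ with conditions $(*)$ and $(**)$, then feeding it into Theorem \ref{mainthm} and chaining $\liminf_{n\to\infty} a_n^{1/2^n} \leq \lim_{n\to\infty} c_n^{1/2^n} < \lim_{n\to\infty} u_n^{1/2^n}$. Your bookkeeping remarks (matching the index $N$ in the two invocations, existence of the limit via Lemma \ref{lemlimexist}) are exactly the points the paper relies on.
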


\subsection{Non-constructive Method}
We now continue our discussion by introducing additional terminology. It is known that for any given \( 0 < \lambda \leq 1 \), the \( n \)-term Egyptian underapproximation with the largest sum of reciprocals, referred to as \emph{the best \( n \)-term Egyptian underapproximation} of \( \lambda \), is guaranteed to exist by \cite[Theorem 3]{Nat23}.

Moreover, not every greedy \( n \)-term Egyptian underapproximation is necessarily the best \( n \)-term Egyptian underapproximation. For example, consider \( \lambda = \frac{10}{61} \). Its greedy two-term Egyptian underapproximation is \(\{7,48\}\), whereas the Egyptian underapproximation \(\{9,19\}\) satisfies  
\[
\frac{1}{7} + \frac{1}{48} < \frac{1}{9} + \frac{1}{19} < \frac{10}{61}.
\]
Thus, a natural question arises: what kind of rational numbers satisfy the property that the best \( n \)-term Egyptian underapproximation is always the greedy \( n \)-term Egyptian underapproximation? This problem has been partially explored by Soundararajan \cite{Sou05}, Nathanson \cite{Nat23}, and Chu \cite{Chu23}, where they investigated conditions on coprime integers \( p \) and \( q \) under which \( p/q \) satisfies this property. It is worth noting that in their results, the best \( n \)-term Egyptian underapproximation is always unique. For instance, Nathanson \cite[Theorem 5]{Nat23} proved that if a rational number \( p/q \) satisfies \( p \mid (q+1) \), then for every positive integer \( m \), the best \( m \)-term Egyptian underapproximation of \( p/q \) is unique and coincides with the greedy \( m \)-term Egyptian underapproximation.

A number \( 0 < \lambda \leq 1 \) is said to have \emph{eventually greedy best Egyptian underapproximations} if, for sufficiently large \( n \), the sum of reciprocals of its best \( n \)-term Egyptian underapproximation, denoted by \( R_n(\lambda) \)\footnote{For a given rational number \( \lambda \), its best \( n \)-term Egyptian underapproximation is not necessarily unique, but the sum of their reciprocals is always the same.}, satisfies  
\[
R_n\left(\lambda\right) = R_{n-1}\left(\lambda\right) + \frac{1}{m},
\]
where \( m \) is the smallest denominator not yet used such that \( R_n(\lambda) < \lambda \).

In their 1980 monograph \cite[p.~31]{EG80}, Erd\H{o}s and Graham stated:  
\begin{quote}
\emph{In other words, it is true that for any rational \( \frac{a}{b} \), the closest strict underapproximation \( R_n\left(\frac{a}{b}\right) \) of \( \frac{a}{b} \) by a sum of \( n \) unit fractions is given by  
\[
R_n\left(\frac{a}{b}\right) = R_{n-1}\left(\frac{a}{b}\right) + \frac{1}{m}
\]
where \( m \) is the least denominator not yet used for which \( R_n\left(\frac{a}{b}\right) < \frac{a}{b} \), provided
that \(n\) is sufficiently large.}
\end{quote}
Nevertheless, they provided neither a proof nor a reference. More than three decades later, Graham \cite[p.~296]{Gra13} implicitly withdrew this claim. He softened the assertion to:  
\begin{quote}
\emph{In other words, is it true that for any rational \( \frac{a}{b} \), the closest strict underapproximation \( R_n\left(\frac{a}{b}\right) \) of \( \frac{a}{b} \) is given by 
\[
R_n\left(\frac{a}{b}\right) = R_{n-1}\left(\frac{a}{b}\right) + \frac{1}{m}
\]
where \( m \) is the least denominator not yet used for which \( R_n\left(\frac{a}{b}\right) < \frac{a}{b} \) provided
that \(n\) is sufficiently large? }
\end{quote}
and subsequently reformulated it as an open question. Nathanson later revisited this problem, explicitly listing it as an open question in \cite[\S~Open problems, (4)]{Nat23}. Although Kova\v{c} \cite[Theorem 1]{Kovac2025} proved that the set of real numbers with this property has Lebesgue measure zero, this result alone is insufficient to settle the unproven claim. We restate this claim here in the form of a conjecture:
\begin{conj}[\cite{EG80,Gra13,Nat23}]\label{conj2}
For every rational number \( 0 < \theta \leq 1\), there exists an integer \( n_0 = n_0(\theta) \) such that, for all \( n \geq n_0 + 1 \),  
\[
R_n(\theta) = R_{n_0}(\theta) + R_{n-n_0}\left( \theta - R_{n_0}(\theta) \right),
\]
and the best \( (n-n_0) \)-term underapproximation \( R_{n-n_0}\left( \theta - R_{n_0}(\theta) \right) \) 
is always constructed by the greedy algorithm.
\end{conj}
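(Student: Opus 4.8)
The plan is to reduce Conjecture~\ref{conj2} to a \emph{stabilization} statement about best underapproximations and then to close the loop with Nathanson's uniqueness result \cite[Theorem~5]{Nat23}. First I would analyze the greedy algorithm on a fixed $\theta=a/b$ (in lowest terms). Writing $\delta_k=\theta-\sum_{i=1}^{k}1/g_i$ for the deficit after the greedy terms $g_1,\dots,g_k$, a one-line computation using $g_k=\lfloor q/p\rfloor+1$ shows that if $\delta_{k-1}=p/q$ in lowest terms with $p\ge 2$ then $\delta_k=p'/q'$ in lowest terms with $1\le p'\le p-1$; hence the deficit numerators strictly decrease and hit $1$ at some finite index $n_1=n_1(\theta)$. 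From step $n_1$ on the greedy sequence is \emph{eventually Sylvester} in the sense of Definition~\ref{def1} — if $\delta_{n_1}=1/q^{*}$ then $g_{n_1+1}=q^{*}+1$ and $g_{m+1}=g_m^2-g_m+1$ for $m\ge n_1+1$ — and $\delta_{n_1+k}=1/(g_{n_1+k+1}-1)$, which is doubly exponentially small. I would also record the elementary bound that each greedy step at least squares the deficit, since $r-1/(\lfloor 1/r\rfloor+1)\le r^2/(1+r)<r^2$; iterating, $\delta_n<\theta^{2^{n}}$, and since the best sum dominates the greedy sum, $0<\theta-R_n(\theta)\le\delta_n<\theta^{2^{n}}$ — the best $n$-term deficit also vanishes doubly exponentially.

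The reduction then runs as follows. Suppose one can show there is an index $n_0$ such that, for all $n>n_0$, some best $n$-term Egyptian underapproximation of $\theta$ has its first $n_0$ terms equal to a fixed best $n_0$-term underapproximation of $\theta$, and that the leftover $\theta-R_{n_0}(\theta)=p/q$ satisfies the divisibility condition $p\mid q+1$ (for $n_0$ large this leftover will have numerator $1$, which is part of what the stabilization below delivers). Then by \cite[Theorem~5]{Nat23} the best $(n-n_0)$-term underapproximation of $\theta-R_{n_0}(\theta)$ is unique and produced by the greedy algorithm, which gives exactly
\[
R_n(\theta)=R_{n_0}(\theta)+R_{n-n_0}\!\left(\theta-R_{n_0}(\theta)\right)
\]
with the second summand greedy, i.e.\ Conjecture~\ref{conj2}. (One checks directly that when $p\mid q+1$ the greedy algorithm on $p/q$ reduces the deficit numerator to $1$ in a single step, so the greedy completion is itself eventually Sylvester, matching the constructive picture behind Theorem~\ref{mainthm}.) Everything is thus reduced to the stabilization statement.

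The heart of the matter — and the step I expect to be the genuine obstacle — is this stabilization: that the sequence of best $n$-term underapproximations ``converges'', coordinate by coordinate, to an infinite underapproximation $\{y_i\}$ with $\sum_i 1/y_i=\theta$ that is eventually Sylvester, and whose prefixes are eventually best. The natural attack is an exchange argument: replacing a too-large $i$-th denominator in a best $n$-term underapproximation by a smaller admissible one improves the running deficit by a \emph{fixed} positive rational amount, whereas the remaining $n-i$ terms can recover at most a doubly exponentially small amount (by the squaring estimate), which is impossible once $n$ is large; so each $x_i^{(n)}$ stays bounded and, being monotone along a subsequence, is eventually constant. The delicate point is that the optimal $n_0$-term prefix need not be the greedy one (already for $\theta=10/61$ it is $\{9,19\}$, not $\{7,48\}$), and one must rule out that it oscillates forever among several competitors as $n\to\infty$. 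Here the naive idea is to exploit the separation $(\mu_2/\mu_1)^{2^{n}}\to\infty$ between two competing leftover deficits $\mu_1<\mu_2$, but the map $\mu\mapsto\mu-R_m(\mu)$ is \emph{not} monotone in $\mu$ (for instance $\tfrac12-R_1(\tfrac12)=\tfrac16$ while $\tfrac{51}{100}-R_1(\tfrac{51}{100})=\tfrac1{100}$), so one first needs two-sided control on $\mu-R_m(\mu)$ for every rational $\mu$ — comparing it with the size of $\mu$ after one greedy step — fine enough that the ordering of completed deficits stabilizes with $n$. Obtaining such uniform control is, I believe, the crux, and it is presumably why the conjecture has remained open; note that Kova\v{c}'s theorem \cite[Theorem~1]{Kovac2025}, which only asserts that the exceptional set of reals is Lebesgue-null, gives no handle on any individual rational $\theta$.
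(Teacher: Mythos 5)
You should first be aware that the paper does not prove Conjecture~\ref{conj2} at all: it is precisely the unproven Erd\H{o}s--Graham claim, which the authors only \emph{assume} in Theorems~\ref{mainthm2}, \ref{mainthm1} and \ref{mainthm3}, and which they restate as open Problem~\ref{problem2}; Kova\v{c}'s result \cite[Theorem 1]{Kovac2025} is explicitly noted to be insufficient to settle it. So there is no proof in the paper to compare yours against, and the relevant question is only whether your proposal closes the problem. It does not, and you say so yourself. The ingredients you do establish are correct but standard and already present in the paper or its references: the descent of deficit numerators under the greedy algorithm and the resulting eventually Sylvester tail are Proposition~\ref{propfinite} and Corollary~\ref{lemfinalsylvester} (via \cite{Badea1993}), the doubly exponential decay of the deficit is classical, and the final appeal to \cite[Theorem 5]{Nat23} is a reasonable way to finish \emph{if} the stabilization statement were available.

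The genuine gap is exactly the step you flag: the claim that for all large $n$ some best $n$-term underapproximation of $\theta$ has a fixed best $n_0$-term prefix whose leftover $\theta-R_{n_0}(\theta)=p/q$ satisfies $p\mid q+1$. That statement is essentially equivalent to Conjecture~\ref{conj2} itself, so the proposal is a reduction of the conjecture to (a form of) the conjecture, not a proof. The exchange heuristic does not deliver it: bounding each coordinate $x_i^{(n)}$ and extracting a monotone subsequence gives at best convergence along subsequences, not eventual constancy of the whole prefix, and ruling out perpetual oscillation among competing prefixes is blocked by precisely the phenomenon you observe, namely that $\mu\mapsto\mu-R_m(\mu)$ is not monotone, so a prefix with smaller immediate deficit need not remain ahead after completion. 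Moreover, even your bounded-coordinate claim is not justified as stated: replacing a large denominator by a smaller admissible one changes the constraint set for the remaining terms, and the ``fixed gain versus doubly exponentially small recovery'' comparison needs the remaining terms of the \emph{competitor} configuration to be controlled, which is not automatic. In short, the proposal correctly identifies where the difficulty lies but leaves that difficulty untouched, so the conjecture remains open both in the paper and in your write-up.
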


In fact, assuming that Conjecture \ref{conj2} holds, we can establish a stronger result as a generalization of Conjecture \ref{conj1}:

\begin{thm}\label{mainthm2}
Assume that Conjecture \ref{conj2} holds. Let \( 0 < \lambda \leq 1 \) be a rational number whose best \( m \)-term Egyptian underapproximation is unique for every positive integer \( m \). Let \( \{u_n\}_{n=1}^{\infty} \) denote the best Egyptian underapproximation of \( \lambda \), and let \( a_1 < a_2 < \cdots \) be any other sequence of positive integers satisfying 
\[
\sum_{i=1}^\infty \frac{1}{a_i} = \lambda.
\]  
Then,
\[
\liminf_{n\to\infty} a_n^{\frac{1}{2^n}} < \lim_{n\to\infty} u_n^{\frac{1}{2^n}}.
\]
\end{thm}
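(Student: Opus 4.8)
\textbf{Proof proposal for Theorem \ref{mainthm2}.}

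The plan is to reduce Theorem \ref{mainthm2} to the mechanism already exploited in Theorem \ref{mainthm}, namely that any sequence which is eventually Sylvester has a genuine limit $\lim_{n\to\infty} a_n^{1/2^n}$, and that this limit is controlled by the size of the ``tail'' the sequence must cover. First I would invoke Conjecture \ref{conj2} applied to $\theta = \lambda$: there is an index $n_0$ such that for all $n \ge n_0+1$ the best $n$-term underapproximation of $\lambda$ is obtained by appending greedy terms to the best $n_0$-term underapproximation. Writing $\lambda' := \lambda - R_{n_0}(\lambda)$, this says precisely that the tail $\{u_n\}_{n > n_0}$ of the best Egyptian underapproximation $\{u_n\}$ of $\lambda$ is the infinite greedy Egyptian underapproximation of the rational $\lambda'$. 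The greedy algorithm on a rational $\lambda' = p/q$ with $p \nmid (q \bmod \text{something})$ eventually satisfies the exact Sylvester recurrence $u_{n+1} = u_n^2 - u_n + 1$ once the numerator of the remainder becomes $1$ — this is the classical observation that a ``greedy'' expansion stabilizes to Sylvester growth; one shows the remainders $r_n = \lambda' - \sum_{j \le n} 1/u_j$ have numerators that are non-increasing and hence eventually equal to $1$, after which $u_{n+1} = u_n(u_n-1)/\gcd + 1$ collapses to the Sylvester rule. Hence $\{u_n\}$ is eventually Sylvester, and by (the computation behind) Theorem \ref{mainthm} it has a well-defined limit $L := \lim_{n\to\infty} u_n^{1/2^n}$, with $\log L = \sum_{n \ge 1} 2^{-n} \log(\text{something involving } u_n)$; more usefully, $L$ satisfies a scaling identity: if the recurrence $c_{n+1} = c_n^2 - c_n + 1$ starts exactly at index $M$ with value $c_M$, then $L = \lim (c_n)^{1/2^n} = (c_M)^{1/2^M} \cdot (1 + O(c_M^{-1}))^{1/2^M}$, so $L$ is essentially $c_M^{1/2^M}$, a quantity that is \emph{strictly decreasing} in the discrepancy $\lambda - \sum_{i<M} 1/u_i$.

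Next I would handle the competitor sequence $\{a_n\}$ with $\sum 1/a_i = \lambda$. The key comparison is: for any $n$, the partial sum $\sum_{i=1}^n 1/a_i$ is an $n$-term Egyptian underapproximation of $\lambda$, hence is $\le R_n(\lambda) = \sum_{i=1}^n 1/u_i$ by maximality of the best underapproximation. Therefore the tail $\sum_{i > n} 1/a_i \ge \sum_{i > n} 1/u_i = r_n$ for every $n$. Since a sum of the reciprocals of an increasing integer sequence starting above $a_{n+1}$ is at most $\sum_{k \ge a_{n+1}} 1/(k(k-1)) \cdot(\text{distinctness})$... more precisely $\sum_{i>n} 1/a_i < \sum_{j \ge 0} 1/(a_{n+1}+j) $ diverges, so I instead use: $\sum_{i>n}1/a_i \le \sum_{i > n} 1/(a_{n+1} + (i-n-1))$ is not summable — the right bound is that $\sum_{i>n} 1/a_i$ for \emph{distinct} integers $\ge a_{n+1}$ can be as large as we like, so this direction needs the finer fact $a_{i} \ge a_{n+1} + (i - n - 1)$ giving no bound; the correct tool is the \emph{reverse} Sylvester bound, namely that to have $\sum_{i > n} 1/a_i \ge r_n$ with distinct integers all $> a_n$, one needs $a_{n+1} \le 1/r_n + 1 = $ essentially the greedy choice, i.e. $a_{n+1} \le u_{n+1}$ cannot be forced, but $\liminf a_{n+1}^{1/2^{n+1}}$ is controlled because infinitely often $a_{n+1}$ must be small enough that the remaining (larger, distinct) reciprocals can still sum to $r_n$, which forces $a_{n+1} \ll (2/r_n)$ infinitely often. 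Combining $r_n \gg u_n^{-1}$ with the growth $u_n \asymp L^{2^n}$ then yields $\liminf_{n} a_n^{1/2^n} \le L^{1/2} \cdot(\dots) < L$ — here is where the strict inequality comes from, exactly as in the proof of Theorem \ref{mainthm}: the tail of $\{a_n\}$, being a \emph{distinct} underapproximation rather than the exact remainder, loses a factor that survives into the $2^n$-th root as a strict drop.

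The main obstacle I anticipate is making the last comparison quantitatively tight enough to get strict inequality rather than $\le$. Concretely, one needs: if $\{a_n\}$ is not eventually the best underapproximation tail (and if it is, then $\{a_n\}$ is eventually Sylvester and one is reduced to comparing two eventually-Sylvester limits, where strictness follows from condition (**)-type reasoning as in Theorem \ref{mainthm}), then infinitely often $\sum_{i=1}^{n} 1/a_i < R_n(\lambda) - \delta_n$ with $\delta_n$ not decaying too fast relative to $r_n$; translating this loss through the doubly-exponential growth requires controlling how a deficit at stage $n$ propagates, which is precisely the content of the estimates behind Theorem \ref{mainthm}. I would structure the proof so that Theorem \ref{mainthm2} becomes: (i) use Conjecture \ref{conj2} to see $\{u_n\}$ is eventually Sylvester, so $\lim u_n^{1/2^n}$ exists and equals a value determined by the ``launch data'' $\big(M, \sum_{i<M} 1/u_i\big)$; (ii) show any competitor either is eventually Sylvester with strictly smaller launch sum (hence strictly smaller limit, by monotonicity of the limit in the launch sum — the same lemma used for Theorem \ref{mainthm}), or fails to be eventually Sylvester, in which case a direct tail estimate gives $\liminf a_n^{1/2^n} \le L^{1/2^j}\cdot(\text{bounded}) < L$ for the relevant offset $j$; (iii) conclude. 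The uniqueness hypothesis on best underapproximations is what guarantees that ``competitor equals best tail'' forces genuine equality of sequences from some point on, cleanly separating the two cases.
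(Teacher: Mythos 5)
Your overall architecture (use Conjecture \ref{conj2} to see that $\{u_n\}$ is eventually Sylvester, then compare limits of eventually Sylvester sequences via a strict monotonicity lemma in the ``launch data'') matches the paper's strategy for part of the argument, and your treatment of an eventually-Sylvester competitor is essentially the paper's Theorem \ref{mainthm1}. But there is a genuine gap in the other branch of your dichotomy, the case where $\{a_n\}$ is \emph{not} eventually Sylvester. Your ``direct tail estimate'' rests on the claim that, since $\sum_{i>n}1/a_i \ge r_n := \lambda - R_n(\lambda)$, the term $a_{n+1}$ must infinitely often be of size $\ll 2/r_n$. This is false: a tail of \emph{distinct} unit fractions with denominators beyond any prescribed threshold can still sum to any required positive amount, because the harmonic series over the integers exceeding that threshold diverges; so the tail-sum requirement never forces $a_{n+1}$ to be small at any particular stage. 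Consequently the concluding bound $\liminf a_n^{1/2^n}\le L^{1/2^j}\cdot(\text{bounded})<L$ has no support (and a ``bounded'' factor could anyway push the product above $L$). You yourself note along the way that the intermediate bounds you try do not close, and the final step is asserted rather than proved.

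What is missing is the paper's non-constructive interpolation step (Theorem \ref{mainthm3}): given a competitor $\{a_n\}$, let $N_1$ be the first index with $a_{N_1}\notin\{u_k\}$, keep $c_i=a_i$ for $i\le N_1$, and apply Conjecture \ref{conj2} to the remainder $\mu=\lambda-\sum_{i\le N_1}1/a_i$ to continue $\{c_n\}$ by the best $n_0$-term underapproximation of $\mu$ followed by the greedy algorithm. This $\{c_n\}$ is eventually Sylvester, sums to $\lambda$, differs from $\{u_n\}$ as a set, and—crucially—its partial sums dominate those of $\{a_n\}$ beyond $N_1$ because its segments are \emph{best} underapproximations of $\mu$. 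That domination is what yields $\liminf a_n^{1/2^n}\le\lim c_n^{1/2^n}$ (if $a_n>c_n$ for all large $n$, the two sums could not both equal $\lambda$), after which the strict inequality $\lim c_n^{1/2^n}<\lim u_n^{1/2^n}$ follows from the eventually-Sylvester comparison (the monotonicity lemma together with the uniqueness hypothesis, i.e.\ Theorem \ref{mainthm1}). Without this intermediate sequence, a deficit of $\{a_n\}$ against $\{u_n\}$ at one stage cannot be propagated through the doubly exponential growth by any purely local estimate of the kind you sketch; so to complete your proof you would need to add precisely this construction (or an equivalent comparison object), not a sharper tail bound.
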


By combining Nathanson \cite[Theorem 5]{Nat23} and Chu \cite[Theorem 1.12]{Chu23}, we obtain the following direct corollary:

\begin{cor}\label{corpq}
Assume that Conjecture \ref{conj2} holds. Let \( 0 < \frac{p}{q} \leq 1 \) be an irreducible fraction satisfying one of the following conditions:
\begin{enumerate}
    \item \( p \) divides \( q+1 \);
    \item \( q \) is odd, and \( l = 2 \) is the smallest positive integer such that \( p \) divides \( q + l \).
\end{enumerate}
Let \( \{u_n\}_{n=1}^{\infty} \) denote the infinite greedy Egyptian underapproximation of \( \frac{p}{q} \), and let \( a_1 < a_2 < \cdots \) be any other sequence of positive integers satisfying  
\[
\sum_{i=1}^\infty \frac{1}{a_i} = \frac{p}{q}.
\]  
Then,  
\[
\liminf_{n\to\infty} a_n^{\frac{1}{2^n}} < \lim_{n\to\infty} u_n^{\frac{1}{2^n}}.
\]
\end{cor}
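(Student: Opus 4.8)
The plan is to obtain Corollary~\ref{corpq} as an immediate application of Theorem~\ref{mainthm2}, once the hypotheses of the latter are checked for $\lambda = p/q$. Recall from Section~\ref{sec1} that the infinite greedy (resp.\ infinite best) Egyptian underapproximation of $p/q$ is the sequence each of whose initial segments $\{x_i\}_{i=1}^m$ is the greedy (resp.\ a best) $m$-term Egyptian underapproximation of $p/q$. Thus it is enough to prove, for each of conditions (1) and (2), that for every positive integer $m$ the best $m$-term Egyptian underapproximation of $p/q$ is unique and coincides with the greedy $m$-term Egyptian underapproximation. Granting this, the uniqueness hypothesis of Theorem~\ref{mainthm2} is met, the infinite greedy underapproximation $\{u_n\}_{n=1}^\infty$ appearing in the corollary coincides with the infinite best underapproximation of $\lambda$ named in Theorem~\ref{mainthm2} (in particular the latter exists, being the nested family of greedy initial segments, which are nested by the recursive definition of the greedy algorithm), and the conclusion $\liminf_{n\to\infty} a_n^{1/2^n} < \lim_{n\to\infty} u_n^{1/2^n}$ follows directly, under Conjecture~\ref{conj2}.

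First I would dispatch case (1): when $p/q$ is irreducible with $p \mid (q+1)$, the required statement is exactly Nathanson~\cite[Theorem~5]{Nat23}, which asserts that the best $m$-term Egyptian underapproximation of such a $p/q$ is unique and equal to the greedy one for every $m$. This includes the degenerate instances $p = 1$ (so $\lambda = 1/q$), and in particular $p/q = 1/1$, so that Corollary~\ref{maincor} is recovered inside Corollary~\ref{corpq}.

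Next I would dispatch case (2): when $q$ is odd and $l = 2$ is the least positive integer with $p \mid q + l$, the analogous conclusion — uniqueness of the best $m$-term Egyptian underapproximation and its agreement with the greedy $m$-term Egyptian underapproximation for every $m$ — is provided by Chu~\cite[Theorem~1.12]{Chu23}. In both cases the infinite greedy and infinite best Egyptian underapproximations of $p/q$ therefore coincide, so Theorem~\ref{mainthm2} applies verbatim and yields the claimed strict inequality.

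I do not expect any genuine obstacle here: the analytic heart of the matter — controlling $\lim_{n\to\infty} u_n^{1/2^n}$ for a best (eventually Sylvester) underapproximation and comparing it against $\liminf_{n\to\infty} a_n^{1/2^n}$ for an arbitrary competitor — is already carried out in the proof of Theorem~\ref{mainthm2}, and the two cited theorems of Nathanson and Chu are precisely what certify its uniqueness hypothesis for the arithmetically defined families (1) and (2). The only points needing care are routine: reconciling the irreducibility and divisibility conventions of \cite{Nat23,Chu23} with those adopted here, checking the boundary cases such as $p = 1$ and $p/q = 1$, and observing that ``best $=$ greedy at every finite stage'' upgrades automatically to the statement that the infinite best and infinite greedy underapproximations are the same sequence, by the definitions recalled above.
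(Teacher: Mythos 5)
Your proposal is correct and matches the paper's (implicit) argument exactly: the paper also obtains Corollary~\ref{corpq} as a direct consequence of Theorem~\ref{mainthm2}, using Nathanson~\cite[Theorem~5]{Nat23} for case (1) and Chu~\cite[Theorem~1.12]{Chu23} for case (2) to certify that the best $m$-term Egyptian underapproximation of $p/q$ is unique and greedy for every $m$, so that the infinite greedy underapproximation in the corollary is the best Egyptian underapproximation required by the theorem. No further comment is needed.
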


For the non-constructive approach to Theorem \ref{mainthm2}, we first need to establish the following result:

\begin{thm}\label{mainthm1}
Assume that Conjecture \ref{conj2} holds. Let \( 0 < \lambda \leq 1 \) be a rational number whose best \( m \)-term Egyptian underapproximation is unique for every positive integer \( m \). Let \( \{u_n\}_{n=1}^{\infty} \) be the best Egyptian underapproximation of \( \lambda \), and let \( \{a_n\}_{n=1}^{\infty} \) be an eventually Sylvester sequence of positive integers satisfying  
\[
\sum_{i=1}^\infty \frac{1}{a_i} = \lambda.
\]  
If \( \{a_n\}_{n=1}^{\infty} \) and \( \{u_n\}_{n=1}^{\infty} \) are distinct as sets, then  
\[
\liminf_{n\to\infty} a_n^{\frac{1}{2^n}} = \lim_{n\to\infty} a_n^{\frac{1}{2^n}}< \lim_{n\to\infty} u_n^{\frac{1}{2^n}}.
\]
\end{thm}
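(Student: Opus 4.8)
\emph{Reduction.} The plan is to show that, granting Conjecture \ref{conj2}, \emph{both} $\{a_n\}$ and $\{u_n\}$ are eventually Sylvester sequences with the same reciprocal sum $\lambda$, and then to run the argument of Theorem \ref{mainthm} verbatim with $\lambda$ in place of $1$. The only genuinely new input will be the verification of the analogue of hypothesis (**) at a conveniently chosen index.

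\emph{Both sequences are eventually Sylvester.} By hypothesis $\{a_n\}$ is eventually Sylvester, so $a_{n+1}=a_n^2-a_n+1$ for $n\ge N_a$. For $\{u_n\}$ I would first record the elementary fact that the \emph{greedy} Egyptian underapproximation of any rational $\mu\in(0,1]$ is eventually Sylvester: writing the $n$-th greedy remainder in lowest terms as $p_n/q_n$, a greedy step with $p_n\nmid q_n$ produces a new numerator (before cancellation) equal to $p_n-(q_n\bmod p_n)\le p_n-1$, so the numerators strictly decrease and eventually some remainder equals $1/m$, after which the recursion gives $x_{k+1}=x_k^2-x_k+1$. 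Now invoke Conjecture \ref{conj2}: for $n\ge n_0$ the tail $u_{n_0+1},u_{n_0+2},\dots$ is the greedy underapproximation of the rational $\lambda-R_{n_0}(\lambda)$, hence $\{u_n\}$ is eventually Sylvester, say $u_{n+1}=u_n^2-u_n+1$ for $n\ge N_u$. In particular $\lim_n u_n^{1/2^n}$ exists, and since $\{a_n\}$ is eventually Sylvester so does $\lim_n a_n^{1/2^n}$: indeed, once $x_{n+1}=x_n^2-x_n+1$ for $n\ge N$ one has $\log x_n/2^n=\log x_N/2^N+\sum_{k=N}^{n-1}g(x_k)/2^{k+1}$ with $g(t):=\log(1-t^{-1}+t^{-2})$ bounded, so the sequence is Cauchy. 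This already proves the first equality $\liminf_n a_n^{1/2^n}=\lim_n a_n^{1/2^n}$.

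\emph{The key inequality.} Choose $N$ large enough that both sequences obey $x_{n+1}=x_n^2-x_n+1$ for $n\ge N$ and that $N-1\ge n_0$. For a Sylvester tail one has the telescoping identity $\sum_{i\ge N}1/x_i=1/(x_N-1)$ (from $\frac{1}{x_{n+1}-1}=\frac{1}{x_n-1}-\frac{1}{x_n}$), so
\[
\sum_{i=1}^{N-1}\frac{1}{a_i}+\frac{1}{a_N-1}=\lambda=\sum_{i=1}^{N-1}\frac{1}{u_i}+\frac{1}{u_N-1}.
\]
Since $N-1\ge n_0$, Conjecture \ref{conj2} together with the optimality of the greedy residual approximation gives $\sum_{i=1}^{N-1}1/u_i=R_{N-1}(\lambda)$, the optimal $(N-1)$-term value, while $\{a_i\}_{i=1}^{N-1}$ is an $(N-1)$-term Egyptian underapproximation of $\lambda$ (strict, because $\sum_{i\ge N}1/a_i>0$); hence $\sum_{i=1}^{N-1}1/a_i\le\sum_{i=1}^{N-1}1/u_i$. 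If equality held, then $\{a_i\}_{i=1}^{N-1}$ would itself be an optimal $(N-1)$-term underapproximation, so by the uniqueness hypothesis $\{a_i\}_{i=1}^{N-1}=\{u_i\}_{i=1}^{N-1}$ as sets; the displayed identity would then force $a_N=u_N$, hence $a_n=u_n$ for all $n\ge N$, making the two sets coincide — contradicting the hypothesis. Therefore $\sum_{i=1}^{N-1}1/a_i<\sum_{i=1}^{N-1}1/u_i$, which is exactly condition (**), and the displayed identity yields $a_N<u_N$.

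\emph{Conclusion and main difficulty.} With $a_N<u_N$ in hand, the comparison of limits is the computation from the proof of Theorem \ref{mainthm}, now with $\lambda$ rather than $1$: using the formula of Step 1, that $g$ is non-decreasing on $[2,\infty)$ (its derivative there is $\frac{(t-2)/t^3}{1-t^{-1}+t^{-2}}\ge 0$), and that $u_k\ge a_k\ge 2$ for all $k\ge N$ (induction from $a_N<u_N$, since $t\mapsto t^2-t+1$ is increasing on $[2,\infty)$), one gets
\[
\frac{\lim_{n\to\infty} u_n^{1/2^n}}{\lim_{n\to\infty} a_n^{1/2^n}}=\exp\Big(\frac{\log(u_N/a_N)}{2^N}+\sum_{k\ge N}\frac{g(u_k)-g(a_k)}{2^{k+1}}\Big)>1,
\]
the leading term being positive and every summand non-negative. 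Together with Step 1 this is the assertion of the theorem. The only non-formal ingredient is Step 1 — that the best Egyptian underapproximation of $\lambda$ is eventually Sylvester — which is precisely where Conjecture \ref{conj2} enters; the rest is the bookkeeping of Theorem \ref{mainthm}, and the uniqueness hypothesis is used only to rule out the degenerate equality case $\sum_{i=1}^{N-1}1/a_i=\sum_{i=1}^{N-1}1/u_i$ in Step 2.
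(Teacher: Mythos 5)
Your proof is correct, and its skeleton matches the paper's: use Conjecture \ref{conj2} to see that \( \{u_n\} \) is eventually greedy, hence eventually Sylvester (Corollary \ref{lemfinalsylvester} in the paper), reduce to showing \( a_N < u_N \) at a common Sylvester index \( N \), and then compare the limits of the two Sylvester tails. Where you genuinely diverge is in the two key steps, and in both cases your route is a bit more streamlined. First, for \( a_N < u_N \) the paper argues by contradiction: assuming \( a_N \ge u_N \) it compares the tails termwise, deduces \( \sum_{i=1}^{N-1} 1/a_i \ge \sum_{i=1}^{N-1} 1/u_i \), and then invokes its unordered uniqueness statement (Corollary \ref{cor:Nathanson}) to force equality and eventually coincidence of the sequences; you instead use the telescoping identity \( \sum_{i\ge N} 1/x_i = 1/(x_N-1) \) for Sylvester tails to convert everything into a statement about the first \( N-1 \) terms, apply optimality of \( R_{N-1}(\lambda) \) plus the uniqueness hypothesis to get the strict inequality directly, and read off \( a_N < u_N \) — essentially equivalent content, but more direct (note that \( \sum_{i=1}^{N-1}1/u_i = R_{N-1}(\lambda) \) is already the definition of \( \{u_n\} \) being the best underapproximation; Conjecture \ref{conj2} is only needed to make \( \{u_n\} \) eventually Sylvester). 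Second, and more substantially, for the strict comparison of limits the paper goes through Lemma \ref{lemmono}: after the substitution \( z_n = 2c_n-1 \) it must show \( f(x)=\log x+\sum_j 2^{-j}\log(1+1/g^{(j)}(x)^2) \) is strictly increasing, which requires differentiating the series and an induction showing each \( h_j>0 \), precisely because the summands there decrease in \( x \) while \( \log x \) increases. Your parametrization \( \log x_n/2^n = \log x_N/2^N + \sum_{k\ge N} g(x_k)/2^{k+1} \) with \( g(t)=\log(1-t^{-1}+t^{-2}) \) avoids this entirely: \( g \) is non-decreasing on \( [2,\infty) \) and \( 2\le a_k\le u_k \) by induction, so the comparison is termwise, with strictness supplied by the leading term \( \log(u_N/a_N)/2^N>0 \). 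This simultaneously gives existence of the limits (so you do not need Lemma \ref{lemlimexist} separately) and is a cleaner substitute for the paper's monotonicity lemma; the trade-off is only that your argument is tailored to integer-valued (hence \( \ge 2 \)) Sylvester tails, whereas the paper's Lemma \ref{lemmono} is stated for real sequences, which it reuses in the constructive Theorem \ref{mainthm} — but that generality is not needed here.
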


To fully establish Theorem \ref{mainthm2}, it remains to prove the following existence result:
\begin{thm}\label{mainthm3}
Assume that Conjecture \ref{conj2} holds. Let \( 0 < \lambda \leq 1 \) be a rational number whose best \( m \)-term Egyptian underapproximation is unique for every positive integer \( m \). Let \( \{u_n\}_{n=1}^{\infty} \) be the best Egyptian underapproximation of \( \lambda \), and let \( a_1 < a_2 < \cdots \) be any other sequence of positive integers satisfying  
\[
\sum_{i=1}^\infty \frac{1}{a_i} = \lambda.
\]  
Then, there exists an eventually Sylvester sequence \(\{c_n\}_{n=1}^{\infty}\) of positive integers satisfying \(\sum_{i=1}^\infty \frac{1}{c_i} = \lambda\), such that \( \{c_n\}_{n=1}^{\infty} \) and \( \{u_n\}_{n=1}^{\infty} \) are distinct as sets, and  
\[
\liminf_{n\to\infty} a_n^{\frac{1}{2^n}} \leq \lim_{n\to\infty} c_n^{\frac{1}{2^n}}.
\]
\end{thm}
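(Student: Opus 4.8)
The plan is to dichotomize on the tail masses $\rho_K:=\lambda-\sum_{i=1}^{K-1}\tfrac1{a_i}=\sum_{i\ge K}\tfrac1{a_i}$ of the given sequence and, in each branch, to produce $\{c_n\}$ either by splicing an optimal continuation onto an initial block of $\{a_n\}$ or, in a degenerate branch, by slightly perturbing $\{u_n\}$ itself. I will use repeatedly that if a sequence of integers $\ge2$ obeys $x_{n+1}=x_n^2-x_n+1$ from some index $N$ on, then $x_n^{1/2^n}$ converges to a limit $>1$: the recurrence gives $\tfrac{\log x_n}{2^n}=\tfrac{\log x_N}{2^N}+\sum_{k\ge N}2^{-(k+1)}\log\!\big(1-x_k^{-1}+x_k^{-2}\big)$, an absolutely convergent series, and the limit cannot be $1$ since $x_{n+1}\ge\tfrac12x_n^2$ forces doubly exponential growth. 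In particular, by Conjecture \ref{conj2} the best Egyptian underapproximation of any rational $0<\nu\le1$ is eventually greedy, and the greedy expansion of a rational has strictly decreasing numerators and so becomes a Sylvester tail as soon as the numerator hits $1$; hence that best underapproximation is eventually Sylvester and has a well-defined $2^n$-th root limit.

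The second ingredient is a telescoping comparison: if $\{b_l\}$ and $\{h_l\}$ are infinite Egyptian fractions of the \emph{same} value $\nu$, if $\{h_l\}$ is eventually Sylvester, and if $\sum_{l\le m}\tfrac1{b_l}\le\sum_{l\le m}\tfrac1{h_l}$ for all large $m$, then $\liminf_l b_l^{1/2^l}\le\lim_l h_l^{1/2^l}$. Writing $\delta_m:=\sum_{l\le m}\tfrac1{h_l}-\sum_{l\le m}\tfrac1{b_l}$, we have $\delta_m\to0$, $\delta_m\ge0$ for large $m$, and $\tfrac1{b_{m+1}}-\tfrac1{h_{m+1}}=\delta_m-\delta_{m+1}$; either $\delta_m=0$ for all large $m$, whence $b_l=h_l$ for all large $l$ and the two sides coincide, or $\delta_M>0$ for arbitrarily large $M$, in which case $\sum_{m\ge M}(\delta_m-\delta_{m+1})=\delta_M>0$ forces infinitely many $m$ with $b_{m+1}<h_{m+1}$, and comparing $(\cdot)^{1/2^{m+1}}$ along that subsequence gives the claim. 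Now the dichotomy: either \textup{(B1)} there are infinitely many $K$ with $\rho_K<1/a_{K-1}$, or \textup{(B2)} $\rho_{m+1}\ge1/a_m$ for all large $m$. I claim (B2) forces $\liminf_n a_n^{1/2^n}=1$: summing $\rho_{k+1}\ge1/a_k$ over $k\ge m_0$ and rearranging yields $\sum_{i\ge m_0+2}\tfrac{i-m_0-1}{a_i}\ge\tfrac1{a_{m_0}}$; but if $\liminf_n a_n^{1/2^n}=c>1$, then $a_i\ge(c-\varepsilon)^{2^i}$ for large $i$ makes the left side at most a polynomial in $m_0$ times $(c-\varepsilon)^{-4\cdot2^{m_0}}$, while taking $m_0$ along a subsequence realizing the liminf makes the right side $\ge(c+\varepsilon)^{-2^{m_0}}$ — impossible for large $m_0$ once $\varepsilon$ is chosen with $(c-\varepsilon)^4>c+\varepsilon$.

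In case (B2), take $K'$ large enough that $\{u_n\}$ obeys the Sylvester recurrence from index $K'$, set $c_i=u_i$ for $i\le K'$ and $c_{K'+1}=u_{K'+1}+1$; since $u_{K'+1}$ is odd, the residual mass $\tfrac1{u_{K'+1}-1}-\tfrac1{u_{K'+1}+1}$ is the unit fraction $\tfrac2{u_{K'+1}^2-1}$, so put $c_{K'+2}=\tfrac{u_{K'+1}^2+1}2$ and continue by the recurrence; this $\{c_n\}$ is an eventually Sylvester Egyptian fraction for $\lambda$, is distinct as a set from $\{u_n\}$ (it contains $u_{K'+1}+1\notin\{u_n\}$), and has $\lim c_n^{1/2^n}>1=\liminf_n a_n^{1/2^n}$. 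In case (B1), pick $K>j$ with $\rho_K<1/a_{K-1}$, where $j$ is the least index with $a_j\ne u_j$ (it exists since $a\ne u$ and both are strictly increasing), let $\{h_l\}$ be the best Egyptian underapproximation of $\rho_K$ (partial sums $R_m(\rho_K)$ for large $m$, via a best $n_0(\rho_K)$-term underapproximation followed by the greedy continuation of Conjecture \ref{conj2}; it is eventually Sylvester), and set $c:=(a_1,\dots,a_{K-1},h_1,h_2,\dots)$. Since $\rho_K<1/a_{K-1}$, every denominator of every Egyptian fraction of $\rho_K$ exceeds $1/\rho_K>a_{K-1}$, so $c$ is strictly increasing; it is an Egyptian fraction for $\lambda$, eventually Sylvester, and distinct as a set from $\{u_n\}$ since $c_j=a_j\ne u_j$. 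Finally $b:=(a_K,a_{K+1},\dots)$ is an Egyptian fraction of $\rho_K$, each of whose $m$-term partial sums is an $m$-term underapproximation of $\rho_K$, hence $\le R_m(\rho_K)=\sum_{l\le m}\tfrac1{h_l}$ for large $m$; the telescoping comparison gives $\liminf_l b_l^{1/2^l}\le\lim_l h_l^{1/2^l}$, and raising both sides to the power $1/2^{K-1}$ (deleting finitely many leading terms not changing a liminf, and $x\mapsto x^{1/2^{K-1}}$ being increasing) yields $\liminf_n a_n^{1/2^n}=\big(\liminf_l b_l^{1/2^l}\big)^{1/2^{K-1}}\le\big(\lim_l h_l^{1/2^l}\big)^{1/2^{K-1}}=\lim_n c_n^{1/2^n}$, as required.

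The step I expect to require the most care is the interface with Conjecture \ref{conj2} in case (B1): one must ensure that the best underapproximation of the auxiliary rational $\rho_K$ genuinely has the eventually-Sylvester shape and the partial sums $R_m(\rho_K)$ dominating those of the arbitrary competitor $b$ — which is exactly what the telescoping comparison consumes — along with the quantitative estimate underpinning the dichotomy in case (B2); the remaining verifications (distinctness as sets, strict monotonicity of the spliced sequence, and that the perturbed sequence in (B2) closes up to $\lambda$) are routine.
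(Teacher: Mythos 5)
Your overall mechanism is the same as the paper's: splice an initial segment of $\{a_n\}$ with a continuation of the residual mass consisting of a best $n_0$-term block followed by the greedy algorithm (supplied by Conjecture \ref{conj2}, hence eventually Sylvester by Corollary \ref{lemfinalsylvester}), note that the partial sums of the spliced sequence dominate those of $\{a_n\}$ from some point on, and conclude by a tail/telescoping comparison. Your comparison lemma and the case (B2) estimate are essentially sound. The genuine gap is in the main case (B1), at the assertion that $\{c_n\}$ and $\{u_n\}$ are distinct as sets ``since $c_j=a_j\ne u_j$''. Disagreement at one index does not give set-distinctness: a priori $a_j$ could coincide with $u_{j'}$ for some $j'\ne j$, and the two sets could agree even though the sequences differ at $j$. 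The natural repair is to use the strict monotonicity of $c$ that you assert (if $c$ is strictly increasing and $\{c_n\}=\{u_n\}$ as sets, then $\sum_n 1/u_n=\lambda$ forces $u$ to be repeat-free, hence the increasing enumeration of the common set, hence equal to $c$ termwise, contradicting $c_j\ne u_j$), but that monotonicity is itself not established: you only prove $h_l>a_{K-1}$ for all $l$. The best $n_0(\rho_K)$-term underapproximation of the auxiliary rational $\rho_K$ is not covered by the uniqueness hypothesis (which concerns $\lambda$ only), so the block $h_1\le\cdots\le h_{n_0}$ may contain repeated terms, and you have not shown that the first greedy term exceeds $h_{n_0}$ (this needs a short exchange argument against the optimality of $R_{n_0}(\rho_K)$). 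As written, the set-distinctness property -- the one conclusion of Theorem \ref{mainthm3} beyond the inequality -- is not proved.

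The paper sidesteps all of this by splicing at $N_1$, the least index with $a_{N_1}\notin\{u_k\mid k\ge1\}$ (such an index exists, since otherwise equal sums would force $a=u$): then $c$ contains $a_{N_1}$, so distinctness as sets is immediate, no monotonicity of $c$ is needed (the theorem never requires $c$ to be increasing), and in consequence your entire (B1)/(B2) dichotomy and the quantitative (B2) estimate become unnecessary. If you wish to keep your structure, choosing $K>\max(j,N_1)$ in case (B1) repairs the gap at once. Two smaller points: your case (B2) argument that $\liminf a_n^{1/2^n}=1$ uses a subsequence along which $a_{m_0}\le(c+\varepsilon)^{2^{m_0}}$, which presupposes the liminf is finite; the case $\liminf=\infty$ needs a separate word, e.g.\ (B2) gives $\rho_m=1/a_m+\rho_{m+1}\le2\rho_{m+1}$ for large $m$, so the tails decay at most geometrically, which is incompatible with $a_n\ge c^{2^n}$ for any $c>1$ -- an argument that in fact handles every $c>1$ uniformly and simplifies your estimate. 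Finally, you use implicitly that the partial sums $R_m(\rho_K)$ converge to $\rho_K$ (so that $\sum_l 1/h_l=\rho_K$ and $c$ really sums to $\lambda$); this is true because the greedy underapproximation already converges to its target, but it deserves a sentence.
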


\begin{remark}
Shortly after this paper was submitted for publication, the authors became aware that a proof of Conjecture \ref{conj1} had also been obtained independently and almost simultaneously by Yuhi Kamio \cite{Kamio2025}. His proof is also constructive but differs from ours in many important ways. For instance, he introduced the concept of the so-called "Generalized Sylvester sequence" and employed a completely different approach to proving the "strict monotonicity" property, analogous to Lemma \ref{lemmono} in our work.   
\end{remark}

\subsection*{Organization}  
In Section~\ref{sec2}, we introduce the preliminaries needed for the rest of the paper. In Section~\ref{sec3}, we present the constructive proof of Conjecture~\ref{conj1}. In Section~\ref{sec4}, we provide the non-constructive proof of Theorem~\ref{mainthm2}. Finally, we propose several open problems for further study in Section~\ref{sec:conclusion}.

\subsection*{Acknowledgements}
The authors would like to express their sincere gratitude to Prof. Vjekoslav Kova\v{c} for his valuable comments, which have significantly improved an earlier version of this paper. They are also grateful to Thomas Bloom for founding and maintaining the website \cite{EP}.

\section{Preliminaries}\label{sec2}

\subsection{The Greedy Approximation and Underapproximation}

The term "approximation" corresponds to "underapproximation". For a positive integer \( n \), we say that a positive integer sequence \( \{x_i\}_{i=1}^n \) is an \emph{\( n \)-term Egyptian approximation} of a rational number \( 0 < \lambda \leq 1 \) if  
\begin{equation}\label{eq:nounder}
\sum_{i=1}^{n} \frac{1}{x_i} = \lambda,
\end{equation}
where \( 2 \leq x_1 \leq x_2 \leq \cdots \leq x_n \). We also define a positive integer sequence \( \{x_i\}_{i=1}^{\infty} \) as an \emph{infinite Egyptian approximation} of a rational number \( 0 < \lambda \leq 1 \) if \(\sum_{i=1}^{\infty} \frac{1}{x_i} = \lambda\), where \(2 \leq x_1 \leq x_2 \leq \cdots  \). Clearly, an infinite Egyptian approximation of \(\lambda\) is necessarily an infinite Egyptian underapproximation of \(\lambda\).

Similar to the Egyptian underapproximation, the greedy algorithm is also an effective method for obtaining an Egyptian approximation. We define the empty sum \( \sum_{i=1}^{0} \) to be \( 0 \). The \emph{greedy \( n \)-term Egyptian approximation} of a rational number \( 0 < \lambda \leq 1 \) is the sequence \( \{x_i\}_{i=1}^n \) satisfying \(\sum_{i=1}^{n} \frac{1}{x_i} = \lambda\), with each term \( x_m \) constructed recursively as follows:  
\[
\sum_{i=1}^{m} \frac{1}{x_i} = \sum_{i=1}^{m-1} \frac{1}{x_i} + \frac{1}{k},
\]  
where \( k \geq 2 \) is the smallest denominator not yet used such that \(\sum_{i=1}^{m} \frac{1}{x_i} \leq \lambda.\)

\begin{exm}
For \( \lambda = 1 \), its greedy Egyptian approximation is given by \(\{2, 3, 6\}\):
\[1 = \frac{1}{2} + \frac{1}{3} + \frac{1}{6}.\]
\end{exm}

In fact, the greedy Egyptian approximation of any rational number is necessarily finite. More precisely, we have the following proposition, which was first provided by Fibonacci in 1202 (see \cite[p.~322]{Wagon1999}):
\begin{prop}\label{propfinite}
    The greedy Egyptian approximation algorithm for a rational number \( 0 < \frac{a}{b} \leq 1 \) always terminates in a finite number of steps.
\end{prop}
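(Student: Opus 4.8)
The plan is to follow Fibonacci's classical descent argument: show that each greedy step strictly decreases the numerator of the rational remainder, so the process cannot go on forever. Write $\frac ab$ in lowest terms, and for $m\ge 0$ let $r_m := \frac ab - \sum_{i=1}^{m}\frac1{x_i}$ be the remainder after $m$ greedy steps (with $r_0=\frac ab$); the algorithm terminates exactly when some $r_m=0$. I would first dispose of the degenerate case $\frac ab=1$ by direct inspection of the definition: it forces $x_1=2$, then $x_2=3$, then $x_3=6$, at which point $r_3=0$.

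So assume $0<\frac ab<1$. The key point is that for a target strictly below $1$ the side conditions in the definition of the greedy $n$-term Egyptian approximation — that $k\ge 2$ and that $k$ has ``not yet'' been used — are automatically in force, so the algorithm simply appends $\frac1{k_0}$ with $k_0=\lceil q/p\rceil$ whenever the current remainder is $\frac pq$ in lowest terms. Indeed, since $0<\frac pq<1$ we have $k_0\ge 2$, and I would show by induction that the denominators satisfy $x_1<x_2<\cdots$, so that none is ever repeated: after appending $\frac1{k_0}$ the new remainder equals $\frac{p k_0-q}{q k_0}$, where $0\le p k_0-q<p$ because $k_0-1<q/p\le k_0$, and moreover $p k_0<2q$, so $\frac{p k_0-q}{q k_0}<\frac1{k_0}$; hence the next greedy denominator $\big\lceil q k_0/(p k_0-q)\big\rceil$ (or the algorithm stops, if $p k_0-q=0$) strictly exceeds $k_0$. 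Since the denominators strictly increase, the ``not yet used'' clause never forces a larger choice, and since every remainder is $<1$ the clause $k\ge 2$ is never binding either; thus the present formulation of the greedy approximation coincides with the unconstrained ``always take $\lceil q/p\rceil$'' rule.

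The heart of the matter is then immediate: reducing $\frac{p k_0-q}{q k_0}$ to lowest terms can only shrink its numerator further, and $p k_0-q<p$ already, so the numerators of $r_1,r_2,\dots$ form a strictly decreasing sequence of nonnegative integers bounded above by $a$. Such a sequence reaches $0$ after at most $a$ steps, and when the numerator is $0$ the remainder is $0$, i.e.\ the greedy expansion has terminated. This proves Proposition~\ref{propfinite}.

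Since this is a classical fact, I do not expect a genuine obstacle; the only thing that needs care is the bookkeeping just described, namely checking that the ``$k\ge 2$'' and ``not yet used'' restrictions in this paper's formulation of the greedy approximation are vacuous once the target lies strictly below $1$ (so that the numerator-descent argument applies verbatim), together with the by-hand treatment of the single exceptional target $\frac ab=1$.
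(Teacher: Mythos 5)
Your proof is correct and follows essentially the same route as the paper: Fibonacci's descent argument, showing each greedy step replaces the remainder's numerator by $pk_0-q<p$, so the numerators form a strictly decreasing sequence of nonnegative integers and the algorithm halts. The only difference is that you additionally verify that the ``$k\ge 2$'' and ``not yet used'' clauses are vacuous (via strictly increasing denominators) and treat $\frac{a}{b}=1$ separately, bookkeeping the paper's proof glosses over; this is a harmless refinement, not a different approach.
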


\begin{proof}
Let \( 0 < \frac{a}{b} \leq 1 \) be a given rational number. The denominator \( t \) of the largest unit fraction that does not exceed \( \frac{a}{b} \) is given by  
    \[
   t = \max\left\{\left\lceil \frac{b}{a} \right\rceil, 2\right\}.
    \]  
    The algorithm proceeds recursively by subtracting \( \frac{1}{t} \) from \( \frac{a}{b} \) and applying the same procedure to the remainder:  
    \[
    \frac{a}{b} - \frac{1}{t}.
    \]  
    Since  
    \[
    \frac{1}{t} \leq \frac{a}{b} < \frac{1}{t-1},
    \]  
    it follows that the numerator of the remaining fraction,  
    \[
    ta - b,
    \]  
    is strictly smaller than \( a \).  

    This process generates a strictly decreasing sequence of numerators, all of which are positive integers. Hence, the sequence must eventually reach zero, ensuring that the algorithm terminates in a finite number of steps.
\end{proof}

A natural question arises: If a sequence \( \{x_n\}_{n=1}^{\infty} \) is the infinite greedy Egyptian underapproximation of a rational number, what structural properties does it possess? The following proposition provides such a characterization:

\begin{prop}\label{thmstructure}
Let \( 0 < \lambda \leq 1 \) be a rational number. If a sequence of positive integers \( \{x_n\}_{n=1}^{\infty} \) is the infinite greedy Egyptian underapproximation of \( \lambda \), then there exists a positive integer \( N \) such that the terms of the sequence \( \{x_n\}_{n=1}^{\infty} \) can be classified into the following two categories:
    \begin{enumerate}
        \item \textbf{The Finite Greedy Approximation Component:} The terms
        \[
        x_{1}, x_{2}, \dots, x_{N-1}, x_{N}-1,
        \]
        form the greedy \( N \)-term Egyptian approximation for \( \lambda \). That is,  
        \[
        \lambda = \frac{1}{x_1} + \frac{1}{x_2} + \cdots +\frac{1}{x_{N-1}} + \frac{1}{x_{N}-1}
        \]
        is an Egyptian fraction obtained through the greedy algorithm.

        \item \textbf{The Infinite Greedy Underapproximation Component of a Unit Fraction:} 
        The terms
        \[
        x_{N}, x_{N+1}, x_{N+2}, x_{N+3}, \dots
        \]
        form the infinite greedy Egyptian underapproximation of the unit fraction
        \[
        \frac{1}{x_{N}-1}.
        \]
    \end{enumerate}
\end{prop}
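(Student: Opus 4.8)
The plan is to show that the infinite greedy Egyptian underapproximation $\{x_n\}_{n=1}^\infty$ of $\lambda$ reproduces, term by term, the finite greedy Egyptian approximation of $\lambda$ up until the running remainder first becomes a unit fraction, and that from that point on it coincides with the infinite greedy Egyptian underapproximation of that unit fraction. To set this up I would invoke Proposition~\ref{propfinite} — together with the description of the greedy approximation used in its proof, namely that each step subtracts the largest unit fraction not exceeding the current remainder — to get that the greedy Egyptian approximation of $\lambda$ terminates after some $N$ steps, say $\lambda = \tfrac{1}{y_1}+\cdots+\tfrac{1}{y_N}$ with $2\le y_1<\cdots<y_N$. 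Writing $s_i:=\lambda-\sum_{j=1}^i\tfrac{1}{y_j}$ for the successive remainders, so that $s_0=\lambda$, $s_N=0$ and $s_i>0$ for $i<N$, the key observation is that none of $s_0,\dots,s_{N-2}$ is a unit fraction, while $s_{N-1}=\tfrac{1}{y_N}$ is one: if some $s_i$ with $i\le N-2$ equalled $\tfrac1k$, the greedy approximation would subtract $\tfrac1k$ at the very next step and halt with $N=i+1$, a contradiction.

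Next I would prove by induction on $i$ that $x_i=y_i$ for $1\le i\le N-1$. The heart of this is the elementary reconciliation of the two rules: the underapproximation rule picks the smallest integer with $\tfrac{1}{x_i}<r$ (the current remainder), while the approximation rule picks the smallest integer with $\tfrac{1}{y_i}\le r$, and these agree — both equal $\lceil 1/r\rceil$ — precisely when $r$ is not a unit fraction. By the inductive hypothesis the remainder entering step $i$ is $s_{i-1}$, which is not a unit fraction since $i-1\le N-2$, so $x_i=y_i$. Hence the remainder of $\{x_n\}$ after $N-1$ terms is $s_{N-1}=\tfrac{1}{y_N}$, and step $N$ of the underapproximation gives $x_N=\lfloor y_N\rfloor+1=y_N+1$, i.e.\ $x_N-1=y_N$. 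Thus $x_1,\dots,x_{N-1},x_N-1=y_1,\dots,y_{N-1},y_N$ is exactly the greedy $N$-term Egyptian approximation of $\lambda$, which is the first assertion.

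For the second assertion I would exploit the self-similarity of the defining recursion. Since $\lambda-\sum_{j=1}^{N-1}\tfrac{1}{x_j}=\tfrac{1}{y_N}$, for every $m\ge 1$ the term $x_{N+m-1}$ is given by $\big\lfloor\big(\tfrac{1}{y_N}-\sum_{l=1}^{m-1}\tfrac{1}{x_{N+l-1}}\big)^{-1}\big\rfloor+1$, which is verbatim the recursion defining the infinite greedy Egyptian underapproximation of $\tfrac{1}{y_N}=\tfrac{1}{x_N-1}$; moreover this underapproximation never terminates, because each remainder stays strictly positive by construction. Hence $x_N,x_{N+1},\dots$ is the infinite greedy Egyptian underapproximation of $\tfrac{1}{x_N-1}$, and the proof is complete.

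The only mildly delicate points — which I would spell out rather than gloss over — are the strict-versus-non-strict inequality reconciliation above (the two rules diverge exactly at unit fractions, which is precisely why the index $N$ marks the fork) and the routine verification that the greedy-approximation denominators are strictly increasing, so that the paper's "smallest unused denominator keeping the sum $\le\lambda$" formulation genuinely coincides with subtracting $\tfrac{1}{\lceil 1/r\rceil}$ (a short estimate: $r-\tfrac{1}{\lceil 1/r\rceil}<\tfrac{1}{\lceil 1/r\rceil(\lceil 1/r\rceil-1)}$). Neither constitutes a real obstacle; the proposition is in essence the bookkeeping identity "greedily underapproximate $\lambda$" $=$ "greedily approximate $\lambda$ until you hit a unit fraction, then greedily underapproximate that unit fraction."
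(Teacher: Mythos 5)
Your overall strategy is the same as the paper's (the paper's own proof is in fact terser: it simply asserts $x_k=y_k$ for $k\le N-1$ and $x_N=y_N+1$, then identifies the tail with the greedy underapproximation of $1/y_N$), but the extra detail you supply contains a false step, and it fails precisely at $\lambda=1$, the case the whole paper is about. Your ``key observation'' that none of the intermediate remainders $s_0,\dots,s_{N-2}$ is a unit fraction is wrong: for $\lambda=1$ the paper's greedy approximation is $\{2,3,6\}$ (so $N=3$), and $s_1=1-\tfrac12=\tfrac12$ \emph{is} a unit fraction with $1\le N-2$. Your argument for the observation (``the greedy approximation would subtract $\tfrac1k$ at the next step and halt'') overlooks that the paper's greedy approximation picks the smallest denominator \emph{not yet used} (and $\ge 2$); here $k=2$ is already used, so the algorithm subtracts $\tfrac13$ instead of halting. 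For the same reason the ``routine verification'' you defer --- that the unused-denominator clause is vacuous because each step subtracts $1/\lceil 1/r\rceil$ and the estimate $r-\frac{1}{\lceil 1/r\rceil}<\frac{1}{\lceil 1/r\rceil(\lceil 1/r\rceil-1)}$ applies --- is not routine and is in fact false at the first step for $\lambda=1$: the clamp $k\ge 2$ forces $y_1=2$ rather than $\lceil 1/\lambda\rceil=1$, and $1-\tfrac12=\tfrac12$ is not $<\tfrac12$. Consequently your induction, which deduces $x_i=y_i$ from ``$s_{i-1}$ is not a unit fraction, so both rules pick $\lceil 1/s_{i-1}\rceil$,'' breaks at $i=2$ for $\lambda=1$ (and even your characterizations of the two rules disagree with the paper's definitions at $i=1$ there).

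The gap is localized and fixable: for rational $\lambda<1$ the clamp never binds, the standard estimate gives $s_i<\frac{1}{y_i(y_i-1)}$, hence the candidate denominator always exceeds all used ones and no intermediate remainder can be a unit fraction, so your argument is fine; the case $\lambda=1$ (or, more generally, a remainder equal to $\tfrac1k$ with $k$ just used) must be treated separately, noting that the approximation is then forced to $k+1$, which coincides with the underapproximation's choice $\lfloor 1/s\rfloor+1=k+1$, after which the estimate takes over again. As written, though, your proof would conclude for $\lambda=1$ that the greedy approximation is $1=\tfrac12+\tfrac12$, which is not the paper's greedy approximation (nor an Egyptian fraction with distinct parts), so the claim as argued does not cover the Sylvester case.
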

\begin{proof} By Proposition \ref{propfinite}, since the greedy Egyptian approximation of the rational number \( \lambda \) has only finitely many terms, there exists a positive integer \( N \) such that \(\{y_1, y_2, \cdots, y_{N-1}, y_{N}\}\) is the greedy \( N \)-term Egyptian approximation for \( \lambda \):
    \[
    \lambda = \frac{1}{y_1} + \frac{1}{y_2} + \dots + \frac{1}{y_{N-1}} + \frac{1}{y_{N}}.
    \]    
    Since \( \{x_n\}_{n=1}^{\infty} \) is the infinite greedy Egyptian underapproximation to \( \lambda \), we have \( x_k = y_k \) for \( 1 \leq k \leq N - 1 \) and \( x_{N} > y_{N} \). Let \(\mu = \lambda - \frac{1}{y_1} - \frac{1}{y_2} - \dots - \frac{1}{y_{N-1}} = \frac{1}{y_N}\), then we know that 
    \[
    \mu = \frac{1}{y_N} = \frac{1}{x_N} + \frac{1}{x_{N+1}} + \frac{1}{x_{N+2}} + \frac{1}{x_{N+3}} + \cdots
    \] is the sum of reciprocals of the infinite greedy Egyptian underapproximation of \( \frac{1}{y_N} \), where  
\[
x_{N} < x_{N+1} < x_{N+2} < x_{N+3} < \cdots.
\] Thus, we must have \( x_{N} = y_{N} + 1 \), and the sequence  
\[
x_{N}, x_{N+1}, x_{N+2}, x_{N+3}, \dots
\]  
is precisely the infinite greedy Egyptian underapproximation of the unit fraction \( \frac{1}{x_N - 1} \). Meanwhile, the terms  
\[
x_{1}, x_{2}, \dots, x_{N-1}, x_{N}-1
\]  
form the greedy \( N \)-term Egyptian approximation for \( \lambda \). Therefore, the proof is complete.
\end{proof}

\begin{exm}\label{example1920}
For \( \lambda = \frac{19}{20} \), its greedy Egyptian approximation is given by 
\[
\frac{19}{20} = \frac{1}{2} + \frac{1}{3} + \frac{1}{9} + \frac{1}{180},
\]
and its infinite greedy Egyptian underapproximation is given by 
\[
\frac{19}{20} = \frac{1}{2} + \frac{1}{3} + \frac{1}{9} + \frac{1}{181} + \frac{1}{32581} + \frac{1}{1061488981} + \frac{1}{1126758855722929381} + \cdots.
\] Thus, the terms \( \{2,3,9,181-1\} \) form the \emph{Finite Greedy Approximation Component}, while the terms \(\{181, 32581, 1061488981, 1126758855722929381, \dots\}\) form the \emph{Infinite Greedy Underapproximation Component} of the unit fraction \( \frac{1}{180} \).
\end{exm}

As a corollary of Proposition \ref{thmstructure}, we can show that every infinite greedy Egyptian underapproximation of a rational number is necessarily eventually Sylvester.

\begin{cor}\label{lemfinalsylvester}
    Let \( 0 < \lambda \leq 1 \) be a rational number. If a sequence of positive integers \(\{x_n\}_{n=1}^{\infty}\) is the infinite greedy Egyptian underapproximation of \( \lambda \), then there exists a positive integer \( N \) such that, for all \( n \geqslant N \),
    \[
    x_{n+1} = x_n^2 - x_n + 1.
    \]
\end{cor}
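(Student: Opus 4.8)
The plan is to deduce the statement from the structural description in Proposition~\ref{thmstructure}, combined with a short induction on the remainders produced by the greedy algorithm.

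First I would invoke Proposition~\ref{thmstructure}: it provides a positive integer $N$ for which the tail $x_N, x_{N+1}, x_{N+2},\dots$ is precisely the infinite greedy Egyptian underapproximation of the unit fraction $\frac{1}{x_N-1}$. Hence it suffices to prove the following self-contained claim, after which the corollary follows with the very same $N$: if $k\geq 1$ is an integer and $\{y_n\}_{n=1}^{\infty}$ is the infinite greedy Egyptian underapproximation of $\frac1k$, then $y_{n+1}=y_n^2-y_n+1$ for every $n\geq 1$.

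To prove the claim I would track the remainders $r_0:=\frac1k$ and $r_n:=\frac1k-\sum_{i=1}^n\frac1{y_i}$ for $n\geq 1$, so that by the definition of the greedy underapproximation $y_{n+1}=\lfloor r_n^{-1}\rfloor+1$, and establish by induction on $n$ that
\[
r_n=\frac{1}{y_n(y_n-1)}\qquad\text{for all }n\geq 1.
\]
The base case is immediate: $\lfloor r_0^{-1}\rfloor=k$, so $y_1=k+1$ and $r_1=\frac1k-\frac1{k+1}=\frac{1}{k(k+1)}=\frac{1}{y_1(y_1-1)}$. For the inductive step, if $r_n=\frac{1}{y_n(y_n-1)}$ then $r_n^{-1}=y_n(y_n-1)$ is an integer, whence $y_{n+1}=y_n(y_n-1)+1=y_n^2-y_n+1$, and
\[
r_{n+1}=r_n-\frac1{y_{n+1}}=\frac{1}{y_n^2-y_n}-\frac{1}{y_n^2-y_n+1}=\frac{1}{(y_n^2-y_n)(y_n^2-y_n+1)}=\frac{1}{(y_{n+1}-1)\,y_{n+1}},
\]
which closes the induction and in particular yields $y_{n+1}=y_n^2-y_n+1$ for all $n\geq 1$.

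I do not expect a serious obstacle here; the only point demanding a moment's care is the interaction between the floor function and the greedy rule, but the inductive hypothesis forces every $r_n$ to be a unit fraction, so $\lfloor r_n^{-1}\rfloor=r_n^{-1}$ and the ``$+1$'' in the greedy definition contributes exactly the summand $y_n(y_n-1)+1$. It is also worth noting that the computation above covers the degenerate value $k=1$ (where $\frac1k=1$ and $\{y_n\}$ is literally the Sylvester sequence), so no separate treatment of small denominators is needed; alternatively, the induction could be replaced by the identity $\sum_{i=1}^{n-1}\frac1{u_i}+\frac1{u_n-1}=1$ recorded in the Remark following the definition of Sylvester's sequence, applied to the shifted unit-fraction problem, but the one-line induction above is cleaner and fully self-contained.
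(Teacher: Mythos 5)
Your proposal is correct and follows essentially the same route as the paper: both reduce via Proposition~\ref{thmstructure} to the tail being the greedy underapproximation of the unit fraction $\frac{1}{x_N-1}$, and then verify the recurrence $y_{n+1}=y_n^2-y_n+1$ for greedy underapproximations of unit fractions. The only cosmetic difference is that the paper quotes the known product recurrence $w_1=1+t$, $w_{n+1}=1+t\,w_1\cdots w_n$ from Badea and notes the identity follows by induction, whereas you supply a self-contained induction on the remainders $r_n=\frac{1}{y_n(y_n-1)}$, which is an equally valid verification of the same fact.
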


\begin{proof}
    By Proposition \ref{thmstructure}, we know that there exists a positive integer \( N \) such that the terms
    \[
    x_{N}, x_{N+1}, x_{N+2}, x_{N+3}, \dots
    \]
    form the infinite greedy Egyptian underapproximation of the unit fraction \(\frac{1}{x_{N}-1}.\)

    It is known that the infinite greedy Egyptian underapproximation \( \{w_n\}_{n=1}^{\infty} \) of the unit fraction \( \frac{1}{t} \) satisfies the recurrence relation:
    \[
    w_1 = 1 + t, \quad w_{n+1} = 1 + t w_1 w_2 \dots w_n.
    \]
    By induction, one can easily verify the following recurrence formula (see \cite[Equation (1.2)]{Badea1993}):
    \[
    w_{n+1} = w_n^2 - w_n + 1.
    \] Let \( \{z_n\}_{n=1}^{\infty} \) be the infinite greedy Egyptian underapproximation of the unit fraction \( \frac{1}{x_{N}-1} \), then it follows that
    \[
    z_{n+1} = z_n^2 - z_n + 1, \quad \text{where } z_k = x_{k+N-1}.
    \]
    Therefore, for all \( n \geq N \), we have
    \[
    x_{n+1} = x_n^2 - x_n + 1.
    \] \end{proof}

\begin{exm}
In Example \ref{example1920}, we have \( N = 4 \), and the terms  
\[
\{181, 32581, 1061488981, 1126758855722929381, \dots\}
\] form the infinite greedy underapproximation of the unit fraction \( \frac{1}{180} \). It is easy to verify that  
\begin{align*}
32581 &= 181^2 - 181 + 1,\\
1061488981 &= 32581^2 - 32581 + 1,\\
1126758855722929381 &= 1061488981^2 - 1061488981 + 1.
\end{align*}

\end{exm}

\subsection{Auxiliary Lemmas}
First, we recall the following result from Nathanson \cite{Nat23}:
\begin{prop}[\cite{Nat23}, Theorem 4]\label{lemNathanson} If \( \{x_i\}_{i=m+1}^{n} \) and \( \{a_i\}_{i=m+1}^{n} \) are increasing sequences of positive numbers such that
\[
\prod_{i=m+1}^{m+k} a_i \leq \prod_{i=m+1}^{m+k} x_i
\]
for all \( k = 1, \dots, n - m \), then
\[
\sum_{i=m+1}^{n} \frac{1}{x_i} \leq \sum_{i=m+1}^{n} \frac{1}{a_i}.
\]
\end{prop}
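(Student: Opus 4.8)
The plan is to convert the multiplicative hypothesis into an additive one by taking logarithms, and then to combine a single convexity (tangent-line) estimate with Abel summation. Set $r = n-m$ and, for $m+1 \le i \le n$, write $\xi_i = \log x_i$, $\alpha_i = \log a_i$, and $d_i = \xi_i - \alpha_i$. Since $\{x_i\}$ and $\{a_i\}$ are increasing, $\{\xi_i\}$ and $\{\alpha_i\}$ are (weakly) increasing, and the hypothesis $\prod_{i=m+1}^{m+k} a_i \le \prod_{i=m+1}^{m+k} x_i$ translates into the statement that all partial sums
\[
D_k := \sum_{i=m+1}^{m+k} d_i = \log\frac{\prod_{i=m+1}^{m+k} x_i}{\prod_{i=m+1}^{m+k} a_i}
\]
are nonnegative, for $k = 1, \dots, r$. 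The desired conclusion $\sum_{i=m+1}^n 1/x_i \le \sum_{i=m+1}^n 1/a_i$ is precisely $\sum_{i=m+1}^n\big(g(\xi_i) - g(\alpha_i)\big) \le 0$ for the convex decreasing function $g(t) = e^{-t}$.

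First I would record the pointwise bound coming from convexity of $g$: the tangent line at $\xi_i$ lies below the graph, so $g(\alpha_i) \ge g(\xi_i) + g'(\xi_i)(\alpha_i - \xi_i)$, i.e.
\[
g(\xi_i) - g(\alpha_i) \;\le\; g'(\xi_i)(\xi_i - \alpha_i) \;=\; w_i\, d_i, \qquad w_i := g'(\xi_i) = -\frac{1}{x_i}.
\]
Summing over $i$ gives $\sum_{i=m+1}^n(1/x_i - 1/a_i) \le \sum_{i=m+1}^n w_i d_i$. The key structural fact is that, because $\{x_i\}$ is increasing, the weights $w_i = -1/x_i$ form a \emph{non-decreasing} sequence of negative numbers.

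Next I would sum the right-hand side by parts. With $D_0 := 0$ and $d_{m+j} = D_j - D_{j-1}$,
\[
\sum_{j=1}^{r} w_{m+j}\, d_{m+j} \;=\; w_{m+r}\, D_r \;+\; \sum_{j=1}^{r-1} \big(w_{m+j} - w_{m+j+1}\big)\, D_j .
\]
Every term here is $\le 0$: the boundary term because $w_{m+r} < 0$ and $D_r \ge 0$, and each remaining summand because $w_{m+j} - w_{m+j+1} \le 0$ (monotonicity of the weights) while $D_j \ge 0$ (the translated hypothesis). Hence $\sum_i w_i d_i \le 0$, and combined with the pointwise bound this yields $\sum_{i=m+1}^n 1/x_i \le \sum_{i=m+1}^n 1/a_i$.

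I expect the main obstacle to be getting the order of operations right, namely realizing that the convexity estimate must come \emph{before} the summation by parts. Telescoping $\sum 1/x_i$ directly through the partial products and then summing by parts produces a weight (of the shape $\prod_{j\le k-1} x_j \big/ \prod_{j\le k} a_j$) that need not be monotone --- already for $\{x_i\} = (1,5,5)$ and $\{a_i\} = (1,1,1)$ it is increasing rather than decreasing --- so that naive route stalls even though the conclusion still holds. Passing first to $g(t)=e^{-t}$ is exactly what repairs the monotonicity. Conceptually this is the standard fact that weak supermajorization of $(\log x_i)$ by $(\log a_i)$ forces $\sum\phi(\log x_i) \le \sum\phi(\log a_i)$ for every convex non-increasing $\phi$, so one could instead cite the relevant majorization inequality; the argument above is self-contained.
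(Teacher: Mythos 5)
Your proof is correct. Note that the paper itself does not prove this proposition at all: it is imported verbatim from Nathanson \cite[Theorem 4]{Nat23}, so any proof you give is necessarily "different from the paper's". What you have written is the classical weak-supermajorization (Tomi\'c--Weyl type) argument: pass to logarithms, observe that the hypothesis says exactly that all partial sums \(D_k=\sum_{i\le m+k}(\log x_i-\log a_i)\) are nonnegative, apply the tangent-line bound for the convex decreasing function \(g(t)=e^{-t}\) to reduce the claim to \(\sum_i w_i d_i\le 0\) with \(w_i=-1/x_i\), and finish by Abel summation using that the weights \(w_i\) are nondecreasing and negative. I checked the details: the tangent-line inequality, the summation-by-parts identity with \(D_0=0\), and the sign analysis of both the boundary term and the differences \(w_{m+j}-w_{m+j+1}\) are all correct. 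Compared with Nathanson's elementary induction on the number of terms, your route buys a short, self-contained argument that isolates the structural mechanism (weak supermajorization of \((\log x_i)\) by \((\log a_i)\) plus convexity and monotonicity of \(t\mapsto e^{-t}\)); it also makes visible that the monotonicity of \(\{a_i\}\) is never used, only that of \(\{x_i\}\), and that \(1/x\) could be replaced by any convex non-increasing function of \(\log x\). Since this proposition is applied twice in the paper (in Lemma \ref{lemmacomstructc} and in the proof of inequality (\ref{construct2})), a self-contained proof of this kind is a perfectly adequate substitute for the citation.
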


The following proposition is an analogue of Nathanson \cite[Theorem 5]{Nat23}. Its proof follows directly from the definition and is therefore immediate.

\begin{prop}\label{lem:Nathanson}
Let \( 0 < \lambda \leq 1 \) be a rational number whose best \( m \)-term Egyptian underapproximation is unique for every positive integer \( m \). Let \( \{u_n\}_{n=1}^{\infty} \) denote the best Egyptian underapproximation of \( \lambda \). For every positive integer \( n \), let \( \{a_i\}_{i=1}^{n} \) be a sequence of positive integers satisfying \(\sum_{i=1}^{n} \frac{1}{a_i} < \lambda\), with \( a_1 \leq a_2 \leq \cdots \leq a_n \). Then  
\begin{equation}\label{ineq1}
    \sum_{i=1}^{n} \frac{1}{a_i} \leq \sum_{i=1}^{n} \frac{1}{u_i},
\end{equation}  
and equality in (\ref{ineq1}) holds if and only if \( a_i = u_i \) for all \( i = 1, \dots, n \).
\end{prop}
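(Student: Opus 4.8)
The plan is to mimic Nathanson's proof of \cite[Theorem 5]{Nat23}, but in the underapproximation setting rather than the exact-representation setting, using the assumed uniqueness of best $m$-term underapproximations. Fix $n$ and a sequence $a_1 \le a_2 \le \cdots \le a_n$ of positive integers with $\sum_{i=1}^n 1/a_i < \lambda$. By definition, $\{u_i\}_{i=1}^n$ is the best $n$-term Egyptian underapproximation of $\lambda$, so $\sum_{i=1}^n 1/u_i$ is the largest possible value of $\sum_{i=1}^n 1/x_i$ over all $n$-term Egyptian underapproximations $\{x_i\}_{i=1}^n$ of $\lambda$. Since $\{a_i\}_{i=1}^n$ is itself an $n$-term Egyptian underapproximation of $\lambda$ (this is exactly the hypothesis $\sum 1/a_i < \lambda$, together with $a_1 \le \cdots \le a_n$), inequality \eqref{ineq1} is immediate: $\sum_{i=1}^n 1/a_i \le \sum_{i=1}^n 1/u_i$.

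It remains to analyze the equality case. First I would observe that $\{u_i\}_{i=1}^n$ is a strictly increasing sequence of integers $\ge 2$: strict monotonicity follows because the best underapproximation never repeats a denominator (if $u_j = u_{j+1} = m$, replacing the pair $\frac1m + \frac1m$ by $\frac1{\lfloor m/2\rfloor}$ or similar would strictly increase the sum while remaining an underapproximation, unless blocked, and one argues this case away as in Nathanson), and that $u_1 \ge 2$ since $\sum 1/u_i < \lambda \le 1$. Now suppose $\sum_{i=1}^n 1/a_i = \sum_{i=1}^n 1/u_i$. Then $\{a_i\}_{i=1}^n$ is also a best $n$-term Egyptian underapproximation of $\lambda$. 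By the uniqueness hypothesis on $\lambda$, the best $n$-term underapproximation is unique, hence $a_i = u_i$ for all $i = 1, \dots, n$. Conversely, if $a_i = u_i$ for all $i$ then trivially the two sums agree. This establishes the "if and only if."

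The only genuinely delicate point is the assertion that "the proof follows directly from the definition": one must be careful that the uniqueness in the hypothesis is uniqueness of the best underapproximation \emph{as a multiset of denominators}, so that equality of the reciprocal sums combined with the fact that $\{a_i\}$ achieves the optimum forces $\{a_i\} = \{u_i\}$ as sequences (after sorting). I would also double-check the edge case where some $a_i$ could equal $1$ — but $\sum 1/a_i < \lambda \le 1$ rules this out — and the case $n$ so small that $\sum_{i=1}^n 1/u_i$ might be forced by a collision; none of these cause trouble. I expect no substantial obstacle here, as this proposition is essentially a restatement of the definition of "best underapproximation" plus the uniqueness assumption, which is why the paper flags its proof as immediate.
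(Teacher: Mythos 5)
Your proof is correct and takes essentially the same route as the paper, which offers no separate argument and declares the proposition ``immediate from the definition'': your key observation that $\{a_i\}_{i=1}^{n}$ is itself an $n$-term Egyptian underapproximation of $\lambda$, so maximality of the best one gives the inequality and the uniqueness hypothesis forces $a_i=u_i$ in the equality case, is exactly that intended argument. The side digression about strict monotonicity/distinctness of the $u_i$ is the only sketchy part, but it is not needed anywhere in the statement or in your main argument, so it does no harm.
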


Now, we introduce the concept of the \textit{increasing rearrangement} of a sequence.
\begin{defn}
Let \(\{a_i\}_{i=1}^{n}\) be a sequence of positive integers, where all elements are distinct and the sequence is not necessarily monotonically increasing. We define its \textit{increasing rearrangement} \(\{b_i\}_{i=1}^{n}\) as the sequence satisfying  
\[
b_1 \leq b_2 \leq \cdots \leq b_n,
\]
where \(\{b_i\}_{i=1}^{n}\) and \(\{a_i\}_{i=1}^{n}\) are identical as sets.
\end{defn}

In Proposition \ref{lem:Nathanson}, we note that the sequence \(\{a_i\}_{i=1}^{n}\) is inherently monotonically increasing by definition. As a direct corollary, we present an unordered version of Proposition \ref{lem:Nathanson}, which will be useful in Section \ref{sec41}:
\begin{cor}\label{cor:Nathanson}
Let \( 0 < \lambda \leq 1 \) be a rational number whose best \( m \)-term Egyptian underapproximation is unique for every positive integer \( m \). Let \( \{u_n\}_{n=1}^{\infty} \) denote the best Egyptian underapproximation of \( \lambda \). For every positive integer \( k \), let \( \{a_i\}_{i=1}^{k} \) be a sequence of positive integers satisfying \(\sum_{i=1}^{k} \frac{1}{a_i} < \lambda\).
Let \( \{b_i\}_{i=1}^{k} \) denote the increasing rearrangement of \( \{a_i\}_{i=1}^{k} \). Then  
\begin{equation}\label{ineq2}
    \sum_{i=1}^{k} \frac{1}{a_i} \leq \sum_{i=1}^{k} \frac{1}{u_i},
\end{equation}  
and equality in (\ref{ineq2}) holds if and only if \( b_i = u_i \) for all \( i = 1, \dots, k \).
\end{cor}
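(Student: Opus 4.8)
The plan is to reduce the statement directly to Proposition~\ref{lem:Nathanson} by observing that passing to the increasing rearrangement does not change the sum of reciprocals. First I would note that, since $\{b_i\}_{i=1}^{k}$ is by definition the increasing rearrangement of $\{a_i\}_{i=1}^{k}$ (so that the two are identical as sets and, in the setting where this corollary is used, consist of $k$ pairwise distinct positive integers), the finite sequence $\{b_i\}_{i=1}^{k}$ is simply a permutation of $\{a_i\}_{i=1}^{k}$. Reordering the terms of a finite sum leaves its value unchanged, so
\[
\sum_{i=1}^{k} \frac{1}{b_i} = \sum_{i=1}^{k} \frac{1}{a_i} < \lambda .
\]
Thus the hypothesis of Proposition~\ref{lem:Nathanson} is satisfied by the monotone sequence $\{b_i\}_{i=1}^{k}$, since by construction $b_1 \leq b_2 \leq \cdots \leq b_k$ and its reciprocal sum is strictly less than $\lambda$.

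Next I would simply apply Proposition~\ref{lem:Nathanson} with $n = k$ and with $\{b_i\}_{i=1}^{k}$ playing the role of the (already ordered) sequence there. This yields
\[
\sum_{i=1}^{k} \frac{1}{a_i} = \sum_{i=1}^{k} \frac{1}{b_i} \leq \sum_{i=1}^{k} \frac{1}{u_i},
\]
which is precisely inequality~(\ref{ineq2}). For the equality characterization, Proposition~\ref{lem:Nathanson} gives that $\sum_{i=1}^{k} 1/b_i = \sum_{i=1}^{k} 1/u_i$ holds if and only if $b_i = u_i$ for every $i = 1, \dots, k$; since the two sums $\sum 1/a_i$ and $\sum 1/b_i$ coincide, this condition is exactly the equality condition for~(\ref{ineq2}), completing the argument.

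I do not expect any genuine obstacle: the entire content is carried by Proposition~\ref{lem:Nathanson}, and the only work is the elementary bookkeeping that rearranging a finite sum preserves its value. The single point deserving a word of care is that the notion of \emph{increasing rearrangement} was defined only for sequences of \emph{distinct} positive integers, so the statement should be read (and is only invoked in Section~\ref{sec41}) for finite collections $\{a_i\}_{i=1}^{k}$ of pairwise distinct denominators; under that reading the rearrangement is well defined and the reduction above goes through verbatim.
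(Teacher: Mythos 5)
Your proposal is correct and matches the paper's treatment: the paper states this as a direct corollary of Proposition~\ref{lem:Nathanson}, obtained precisely by passing to the increasing rearrangement and using that reordering a finite sum of reciprocals changes neither its value nor the equality condition. Your remark about the distinctness assumption in the definition of the increasing rearrangement is a sensible clarification but does not alter the argument.
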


The following lemma is crucial for proving the "strictly less than" part of Theorems \ref{mainthm} and \ref{mainthm1}. It establishes the strict monotonicity of the limits of different eventually Sylvester sequences with respect to their first terms.

\begin{lem}\label{lemmono}
Let \( a_1 \geq 3 \) be a real number, and let the sequence \( \{a_n\}_{n=1}^{\infty} \) satisfy the recurrence relation:
\[
2a_{n+1} = a_n^2 + 1.
\]
Define the function:
\[
f(a_1) = \log a_1 + \sum_{j=1}^{\infty} 2^{-j} \log \left( 1 + \frac{1}{a_j^2} \right).
\] Then \(f(a_1)\) is bounded above. 

Moreover, if \( \{b_n\}_{n=1}^{\infty} \) is another sequence satisfying \( b_1 > a_1 \) and \( 2b_{n+1} = b_n^2 + 1\), then \[f(b_1)>f(a_1).\]
\end{lem}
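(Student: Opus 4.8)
The plan is to analyze $f$ as a function of the real starting value $a_1 \ge 3$ via the recursion $2a_{n+1} = a_n^2 + 1$, and show that it is well-defined (bounded above) and strictly increasing. For the boundedness claim, I would first observe that the recursion forces rapid growth: since $a_1 \ge 3$, an easy induction gives $a_{n+1} = (a_n^2+1)/2 \ge a_n^2/2 \ge a_n$ (in fact $a_n \to \infty$ doubly exponentially), so $a_j^2 \ge a_1^{2^j}$ grows at least like $3^{2^j}$. Hence $\log(1 + a_j^{-2}) \le a_j^{-2} \le 3^{-2^j}$, and the series $\sum_j 2^{-j}\log(1+a_j^{-2})$ is dominated term-by-term by $\sum_j 2^{-j}3^{-2^j}$, which converges; so $f(a_1)$ is finite. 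Moreover $f(a_1) \le \log a_1 + \sum_j 2^{-j} a_j^{-2}$, and this bound is itself increasing but finite for each fixed $a_1$, which is all that "bounded above" requires here. (If a uniform bound over all $a_1 \ge 3$ were wanted one would instead bound $\log a_1$ trivially on compacta, but the statement only asks boundedness of the value $f(a_1)$.)

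For the strict monotonicity, the key is to recognize the telescoping identity hidden in $f$. Set $g_n(a_1) := 2^{-(n-1)}\log a_n$. Then
\[
g_{n+1}(a_1) - g_n(a_1) = 2^{-n}\log a_{n+1} - 2^{-(n-1)}\log a_n = 2^{-n}\bigl(\log a_{n+1} - 2\log a_n\bigr) = 2^{-n}\log\frac{a_n^2+1}{2a_n^2} = 2^{-n}\log\frac{1 + a_n^{-2}}{2}.
\]
Summing from $n=1$ to $N-1$ telescopes to $g_N(a_1) - \log a_1 = \sum_{n=1}^{N-1} 2^{-n}\log\frac{1+a_n^{-2}}{2}$, i.e.
\[
2^{-(N-1)}\log a_N = \log a_1 + \sum_{n=1}^{N-1} 2^{-n}\log(1 + a_n^{-2}) - \sum_{n=1}^{N-1} 2^{-n}\log 2.
\]
Letting $N \to \infty$ and using $\sum_{n\ge 1} 2^{-n} = 1$, the left side converges to a finite limit $L(a_1) = \lim_N 2^{-(N-1)}\log a_N$ (the analogue of $\log c_0$), and we get $L(a_1) = f(a_1) - \log 2$. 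Thus it suffices to prove $L$ is strictly increasing in $a_1$, which is natural because $L(a_1) = \lim_N (2 a_N)^{2^{-(N-1)}}$-type growth rate and each $a_N$ is a strictly increasing function of $a_1$.

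To make "strictly increasing" precise and quantitative, I would compare the two sequences $\{a_n\}$ and $\{b_n\}$ with $b_1 > a_1 \ge 3$ directly. First, an induction using that $t \mapsto (t^2+1)/2$ is strictly increasing and has derivative $t \ge 3 > 1$ on $[3,\infty)$ shows not merely $b_n > a_n$ for all $n$ but that the gap amplifies: concretely $\log b_{n+1} - \log a_{n+1} = \log\frac{b_n^2+1}{a_n^2+1} \ge \log\frac{b_n^2}{a_n^2} = 2(\log b_n - \log a_n)$, so $\log b_n - \log a_n \ge 2^{n-1}(\log b_1 - \log a_1)$. Therefore $2^{-(n-1)}(\log b_n - \log a_n) \ge \log b_1 - \log a_1 > 0$ for every $n$, and passing to the limit gives $L(b_1) - L(a_1) \ge \log b_1 - \log a_1 > 0$, hence $f(b_1) > f(a_1)$ as claimed.

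The main obstacle is essentially bookkeeping rather than conceptual: one must be careful that the limit defining $L$ (equivalently, that the series in $f$) genuinely converges so that the telescoping manipulation is legitimate, and that the inequality $\log b_{n+1} - \log a_{n+1} \ge 2(\log b_n - \log a_n)$ is applied in the correct direction with the $a_n^{-2}$ error term going the right way — it does, because adding $1$ to both $a_n^2$ and $b_n^2$ only shrinks the ratio $b_n^2/a_n^2$ toward $1$... wait, one must check the sign: $\frac{b_n^2+1}{a_n^2+1}$ versus $\frac{b_n^2}{a_n^2}$; since $b_n > a_n$, cross-multiplying shows $\frac{b_n^2+1}{a_n^2+1} < \frac{b_n^2}{a_n^2}$, so in fact the clean inequality is $\log b_{n+1} - \log a_{n+1} < 2(\log b_n - \log a_n)$, and I instead need a lower bound. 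The fix: bound below by noting $\frac{b_n^2+1}{a_n^2+1} \ge \frac{b_n^2}{a_n^2}\cdot\frac{a_n^2}{a_n^2+1} \ge \frac{b_n^2}{a_n^2}(1 - a_n^{-2})$, so $\log b_{n+1} - \log a_{n+1} \ge 2(\log b_n - \log a_n) - a_n^{-2}$ (using $\log(1-x) \ge -2x$ for small $x$, valid since $a_n \ge 3$); since $\sum_n 2^{-n} a_n^{-2}$ converges and the $2^{n-1}$ amplification of the main term dominates, one still concludes $2^{-(n-1)}(\log b_n - \log a_n)$ is bounded below by a positive constant, giving $L(b_1) > L(a_1)$. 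This error-term control is the one genuinely delicate point, and the fast growth of $a_n$ is exactly what makes it work.
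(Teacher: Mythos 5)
Your reduction of the problem via the telescoping identity is correct and is a genuinely different route from the paper (which differentiates $f$ term by term and shows $f'(x)>0$ by an induction): indeed $2^{-(N-1)}\log a_N=\log a_1+\sum_{n=1}^{N-1}2^{-n}\log(1+a_n^{-2})-\sum_{n=1}^{N-1}2^{-n}\log 2$, so $f(a_1)=\log 2+\lim_{N\to\infty}2^{-(N-1)}\log a_N$, and strict monotonicity of $f$ is equivalent to strict monotonicity of that limit. The boundedness part is also fine in substance, although your intermediate claim $a_j^2\ge a_1^{2^j}$ is false (for $a_1=3$ one has $a_2=5$ and $25<81$); the harmless fix is that $a_j\ge 3$ for all $j$ already gives $\log(1+a_j^{-2})\le\log(10/9)$, hence convergence and $f(a_1)\le\log a_1+1$, which is exactly the bound the paper uses.

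The genuine gap is in the final comparison step. After you correctly notice that $\log b_{n+1}-\log a_{n+1}<2(\log b_n-\log a_n)$, your repaired estimate is the additive bound $d_{n+1}\ge 2d_n-Ca_n^{-2}$ with $d_n=\log b_n-\log a_n$. Normalizing by $2^{-(n-1)}$ this only yields
\[
2^{-(N-1)}d_N \;\ge\; d_1-C\sum_{k\ge 1}2^{-k}a_k^{-2},
\]
and the subtracted sum is a fixed positive constant depending only on $a_1$ (roughly $a_1^{-2}$, about $0.1$ for $a_1=3$), not on $b_1$. So when $b_1$ is close to $a_1$, i.e. $d_1$ is tiny, this lower bound is negative and proves nothing; your assertion that ``the $2^{n-1}$ amplification of the main term dominates'' is not right, because after the $2^{-(n-1)}$ normalization there is no amplification left to absorb an error of constant size. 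The step can be repaired by making the error multiplicative rather than additive: applying the mean value theorem to $t\mapsto\log(e^{2t}+1)$ on $[\log a_n,\log b_n]$ gives $d_{n+1}\ge \frac{2}{1+a_n^{-2}}\,d_n$, hence $2^{-(N-1)}d_N\ge d_1\big/\prod_{n\ge1}(1+a_n^{-2})$, and the infinite product converges since $a_{n+1}\ge\frac{3}{2}a_n$ forces $\sum a_n^{-2}<\infty$; this yields $\lim_N 2^{-(N-1)}(\log b_N-\log a_N)>0$ and hence $f(b_1)>f(a_1)$. With that replacement your argument closes and is arguably more elementary than the paper's differentiation-under-the-sum proof, but as written the decisive inequality does not follow from what you established.
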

\begin{proof}
Let \(
g(x) = \frac{x^2+1}{2}\). Denote \(
g^{(n)}(x) = \underbrace{g \circ g \circ \dots \circ g}_{n \text{ times}}(x)\), and define \( g^{(0)}(x) = x \).

Define the function:
\[
f(x) = \log(x) + \sum_{j=1}^{\infty} 2^{-j} \log \left( 1 + \frac{1}{g^{(j)}(x)^2} \right).
\]

First of all, we prove that \( f(x) \) is well-defined in \( [3, +\infty) \). For \( x \geq 3 \), since \( g^{(j)}(x) > 0 \), we know that each summand in \( f(x) \) is positive. Moreover, since \(
g(x) = \frac{x^2+1}{2} \geq x\), it follows that
\[
g^{(j)}(x) \geq g^{(j-1)}(x) \geq \cdots \geq x \geq 3.
\]
Hence, we know that
\[
\log \left( 1 + \frac{1}{g^{(j)}(x)^2} \right) \leq \log \left( \frac{10}{9} \right) < 1.
\] Therefore,
\[
f(x) \leq \log (x) + \sum_{j=1}^{\infty} 2^{-j} = \log (x) + 1.
\]
This means that \( f(x) \) is convergent and well-defined for \( x \in [3, +\infty) \), and is also bounded above.

Next, we will prove that \( f(x) \) is a strictly increasing function on \( [3, +\infty) \).

We know the fact that:
\[
\frac{d}{dx} g^{(n)}(x) = \prod_{k=0}^{n-1} g'(g^{(k)}(x)),
\]
then by the dominated convergence theorem, we can differentiate \( f(x) \): \begin{align*}
f'(x) &= \frac{1}{x} + \sum_{j=1}^{\infty} 2^{-j} \cdot \frac{1}{1 + \frac{1}{g^{(j)}(x)^2}} \cdot \frac{-2}{g^{(j)}(x)^3} \cdot \prod_{k=0}^{j-1} g'(g^{(k)}(x))\\
&= \frac{1}{x} - \sum_{j=1}^{\infty} 2^{-(j-1)} \cdot \frac{\prod_{k=0}^{j-1} g'(g^{(k)}(x))}{g^{(j)}(x)^3 + g^{(j)}(x)}.
\end{align*} Since \( g'(x) = x \) and \(
\frac{1}{x} = \sum_{j=1}^{\infty} 2^{-j} \cdot \frac{1}{x}\), we know that
\[
f'(x) = \sum_{j=1}^{\infty} \frac{1}{2^{j-1}} \cdot \left( \frac{1}{2x} - \frac{\prod_{k=0}^{j-1} g^{(k)}(x)}{g^{(j)}(x)^3 + g^{(j)}(x)} \right).
\]
Let \(
h_j(x) = \frac{1}{2x} - \frac{\prod_{k=0}^{j-1} g^{(k)}(x)}{g^{(j)}(x)^3 + g^{(j)}(x)}\). Then we obtain that \(
f'(x) = \sum_{j=1}^{\infty} \frac{1}{2^{j-1}} \cdot h_j(x)\).

Now, we claim that for each \( j \geq 1 \), we have \( h_j(x) > 0 \) when \( x \geq 3 \).
We shall prove this claim by induction.

When \( j = 1 \), we have
\[
h_1(x) = \frac{1}{2x} - \frac{x}{\left( \frac{x^2+1}{2} \right)^3 + \frac{x^2+1}{2}} > 0 \quad \text{for } x \geq 3,
\]
which holds obviously.

Assume \( h_j(x) > 0 \) for \( 1 \leq j \leq n \). Consider \begin{align*}
h_{n+1}(x) &= \frac{1}{2x} - \frac{\prod_{k=0}^{n} g^{(k)}(x)}{g^{(n+1)}(x)^3 + g^{(n+1)}(x)}\\
&= \frac{1}{2x} - \frac{\prod_{k=0}^{n-1} g^{(k)}(x)}{g^{(n)}(x)^3 + g^{(n)}(x)} \cdot \frac{g^{(n)}(x) \cdot (g^{(n)}(x)^3 + g^{(n)}(x))}{g^{(n+1)}(x)^3 + g^{(n+1)}(x)}.
\end{align*} Let \( y = g^{(n)}(x) \), then \( g^{(n+1)}(x) = g(y) = \frac{y^2+1}{2} \). Additionally, we know from \( x \geq 3 \) that \( y \geq 3 \). Hence,
\[
\frac{g^{(n)}(x) \cdot (g^{(n)}(x)^3 + g^{(n)}(x))}{g^{(n+1)}(x)^3 + g^{(n+1)}(x)} = \frac{y(y^3+y)}{\left(\frac{y^2+1}{2}\right)^3 + \frac{y^2+1}{2}} < 1 \quad \text{for } y \geq 3.
\] It follows that
\[
h_{n+1}(x) > \frac{1}{2x} - \frac{\prod_{k=0}^{n-1} g^{(k)}(x)}{g^{(n)}(x)^3 + g^{(n)}(x)} = h_n(x) > 0,
\] which completes the proof of the claim that \( h_j(x) > 0 \) for \( j \geq 1 \).
Therefore,
\[
f'(x) = \sum_{j=1}^{\infty} \frac{1}{2^j} \cdot h_j(x) > 0 \quad \text{for } x \geq 3.
\] This means that \( f(x) \) is a strictly increasing function, thus we have completed the proof that
\[
f(b_1) > f(a_1),
\]
where \( b_1 > a_1 \geq 3 \). \end{proof}

\begin{lem}\label{lemlimexist}
Let \( \{c_n\}_{n=1}^{\infty} \) be an eventually Sylvester sequence satisfying \( c_k \geq 2 \) for all positive integers \( k \). Then, the limit  
    \[
    \lim_{n\to\infty} c_n^{\frac{1}{2^n}}
    \]  
    exists.
\end{lem}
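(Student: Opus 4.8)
The plan is to reduce the existence of $\lim_{n\to\infty}c_n^{\frac{1}{2^n}}$ to the convergence of the real sequence $\big(\frac{\log c_n}{2^n}\big)_n$, since $c_n^{\frac{1}{2^n}}=\exp\!\big(\frac{\log c_n}{2^n}\big)$ and $\exp$ is continuous. By hypothesis there is an integer $N$ with $c_{n+1}=c_n^2-c_n+1$ for all $n\ge N$, and $c_N\ge 2$ forces $c_n\ge 2$ for every $n\ge N$, so all quantities below are well defined and positive.

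The key computation is the factorisation $c_{n+1}=c_n^2\big(1-\frac1{c_n}+\frac1{c_n^2}\big)$, valid for $n\ge N$. Taking logarithms and dividing by $2^{n+1}$ gives
\[
\frac{\log c_{n+1}}{2^{n+1}}=\frac{\log c_n}{2^n}+\frac1{2^{n+1}}\log\!\Big(1-\frac1{c_n}+\frac1{c_n^2}\Big),
\]
so that, summing this telescoping identity from $n=N$ to $m-1$,
\[
\frac{\log c_m}{2^m}=\frac{\log c_N}{2^N}+\sum_{n=N}^{m-1}\frac1{2^{n+1}}\log\!\Big(1-\frac1{c_n}+\frac1{c_n^2}\Big)\qquad(m>N).
\]
It therefore suffices to prove that the series $\sum_{n\ge N}2^{-(n+1)}\log\!\big(1-\frac1{c_n}+\frac1{c_n^2}\big)$ converges.

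For that I would use the elementary bound $\frac34\le 1-\frac1x+\frac1{x^2}<1$ for every real $x\ge 2$: the upper bound is immediate since $\frac1{x^2}<\frac1x$, and the lower bound follows from $\frac{d}{dx}\big(1-\frac1x+\frac1{x^2}\big)=\frac{x-2}{x^3}\ge 0$ on $[2,\infty)$, so the minimum on $[2,\infty)$ is attained at $x=2$. Hence each summand satisfies $\big|2^{-(n+1)}\log(1-\frac1{c_n}+\frac1{c_n^2})\big|\le 2^{-(n+1)}\log\frac43$, and comparison with a geometric series shows the series converges absolutely. Consequently $\frac{\log c_m}{2^m}$ tends to a finite limit $L$, and $c_m^{\frac{1}{2^m}}\to e^{L}$, which is the assertion.

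There is no real obstacle here; the one point needing care is the uniform separation of $1-\frac1{c_n}+\frac1{c_n^2}$ away from $0$, so that the logarithms stay bounded, and this is precisely where the hypothesis $c_k\ge 2$ enters. Alternatively, the lemma can be deduced from Lemma~\ref{lemmono} via the substitution $a_n=2c_n-1$, which turns the recurrence $c_{n+1}=c_n^2-c_n+1$ into $2a_{n+1}=a_n^2+1$ and, after an index shift, expresses $\lim_n\frac{\log c_n}{2^n}$ in terms of the (bounded, hence finite) value of the function $f$ appearing there; but the direct telescoping argument above is shorter and self-contained.
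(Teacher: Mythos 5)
Your proof is correct, and it takes a genuinely different (and more elementary) route than the paper. You telescope the identity obtained from the factorisation $c_{n+1}=c_n^2\bigl(1-\tfrac1{c_n}+\tfrac1{c_n^2}\bigr)$ and observe that for $c_n\ge 2$ the correction factors lie in $[\tfrac34,1)$, so the series of correction terms converges absolutely by comparison with $\sum 2^{-(n+1)}\log\tfrac43$; continuity of $\exp$ then finishes the argument, with no auxiliary lemma needed. The paper instead completes the square, writing $c_{n+1}-\tfrac12=(c_n-\tfrac12)^2+\tfrac14$, telescopes $2^{-n}\log(c_n-\tfrac12)$, passes to the shifted sequence $z_n=2c_{n+N-1}-1$ satisfying $2z_{n+1}=z_n^2+1$, and invokes the boundedness statement of Lemma~\ref{lemmono} to conclude that the resulting positive series converges; it also needs the small extra step $\lim_n c_n^{1/2^n}=\lim_n (c_n-\tfrac12)^{1/2^n}$, which your version avoids. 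What the paper's heavier setup buys is reuse: the quantities $J$, the shifted sequences, and the function $f$ of Lemma~\ref{lemmono} are exactly what the proof of Theorem~\ref{mainthm} needs in order to compare two eventually Sylvester sequences via strict monotonicity of $f$, whereas your self-contained telescoping argument proves only existence of the limit (which is all this lemma asserts). Your closing remark correctly identifies that the paper's route is recoverable via the substitution $2c_n-1$, so the two arguments are consistent; as a standalone proof of Lemma~\ref{lemlimexist}, yours is shorter and complete.
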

\begin{proof} By the definition of eventually Sylvester sequence, there exists an integer \( N > 0 \) such that for \( n \geq N \), the sequence \( \{c_n\}_{n=1}^\infty \) satisfies the following recurrence relations:
\[c_{n+1} = c_n^2 - c_n + 1.\]
Now we only need to consider the behavior of \( c_n \) for \( n \geq N\). Since
\[
c_{n+1} - \frac{1}{2} = (c_n - \frac{1}{2})^2 + \frac{1}{4},
\]
it follows that
\[
\frac{c_{n+1} - \frac{1}{2}}{(c_n - \frac{1}{2})^2} = 1 + \frac{1}{4 (c_n - \frac{1}{2})^2}.
\]
Taking the natural logarithm on both sides, we obtain:
\[
\log (c_{n+1} - \frac{1}{2}) - 2 \log (c_n - \frac{1}{2}) = \log \left( 1 + \frac{1}{4 (c_n - \frac{1}{2})^2} \right).
\]
That is,
\[
2^{-(n+1)} \log (c_{n+1} - \frac{1}{2}) - 2^{-n} \log (c_n - \frac{1}{2}) = 2^{-(n+1)} \log \left( 1 + \frac{1}{4 (c_n - \frac{1}{2})^2} \right).
\]
Summing both sides from \( N \) to \(\infty \), we derive:
\[
\lim_{n \to \infty} 2^{-n} \log (c_n - \frac{1}{2}) - 2^{-N} \log (c_{N} - \frac{1}{2}) = \sum_{n=N}^{\infty} 2^{-(n+1)} \log \left( 1 + \frac{1}{4 (c_n - \frac{1}{2})^2} \right).
\]
Therefore, we know that:
\[
\lim_{n \to \infty} \log \left[ (c_n - \frac{1}{2})^{\frac{1}{2^n}} \right] = 2^{-N} \log (c_{N} - \frac{1}{2}) + \sum_{n=N}^{\infty} 2^{-(n+1)} \log \left( 1 + \frac{1}{4 (c_n - \frac{1}{2})^2} \right).
\]
That is,
\[
\lim_{n \to \infty} (c_n - \frac{1}{2})^{\frac{1}{2^n}} = \exp \left\{ 2^{-N} \log (c_{N} - \frac{1}{2}) + \sum_{n=N}^{\infty} 2^{-(n+1)} \log \left( 1 + \frac{1}{4 (c_n - \frac{1}{2})^2} \right) \right\}.
\]
Furthermore, since
\(
\lim_{n \to \infty} c_n^{\frac{1}{2^n}}  = \lim_{n \to \infty} (c_n - \frac{1}{2})^{\frac{1}{2^n}},\)
it follows that
\[
\lim_{n \to \infty} c_n^{\frac{1}{2^n}} = \exp \left\{ 2^{-N} \log (c_{N} - \frac{1}{2}) + \sum_{n=N}^{\infty} 2^{-(n+1)} \log \left( 1 + \frac{1}{4 (c_n - \frac{1}{2})^2} \right) \right\}.
\] Now, we define
\[
J = 2^{-N} \log (c_{N} - \frac{1}{2}) + \sum_{n=N}^{\infty} 2^{-(n+1)} \log \left( 1 + \frac{1}{4 (c_n - \frac{1}{2})^2} \right),
\]
then we know that \(
\lim_{n \to \infty} c_n^{\frac{1}{2^n}} = e^{J}\).
Since
\begin{align*}
2^{N} J +\log 2&= \log (2 c_{N} - 1) + \sum_{n=N}^{\infty} 2^{N-(n+1)} \log \left( 1 + \frac{1}{4 (c_n - \frac{1}{2})^2} \right)
\\& = \log (2 c_{N} - 1) + \sum_{j=1}^{\infty} 2^{-j} \log \left( 1 + \frac{1}{(2 c_{j+N-1} - 1)^2} \right),
\end{align*}
we define the sequence \(z_n = 2 c_{n+N-1} - 1\). Then, the sequence \( \{z_n\}_{n=1}^{\infty} \) satisfies the following recurrence relations:
\[
2 z_{n+1} = z_n^2 + 1.
\] Therefore, we know that
\begin{align*}
2^{N} J + \log 2 &= \log (2 c_{N} - 1) + \sum_{j=1}^{\infty} 2^{-j} \log \left( 1 + \frac{1}{(2 c_{j+N-1} - 1)^2} \right)\\&= \log (z_1) + \sum_{j=1}^{\infty} 2^{-j} \log \left( 1 + \frac{1}{z_j^2} \right).
\end{align*}
Since \( z_1 = 2c_N - 1 \geq 3 \), it follows from Lemma \ref{lemmono} that the following positive series  
\[
f(z_1) = \log (z_1) + \sum_{j=1}^{\infty} 2^{-j} \log \left( 1 + \frac{1}{z_j^2} \right)
\]
is bounded above. Hence, \( f(z_1) \) converges, which implies the existence of \( J \), and consequently,  
\[
\lim_{n \to \infty} c_n^{\frac{1}{2^n}}
\]
exists. \end{proof}

\section{Proof of Constructive Method}\label{sec3}

\subsection{Proof of Theorem \ref{mainthm}}
We are now ready to present the following
\begin{proof}[Proof of Theorem \ref{mainthm}]

Recall that for \( n \geq N \), the sequence \( \{a_n\}_{n=1}^{\infty} \) satisfies the following recurrence relation:
\[a_{n+1} = a_n^2 - a_n + 1.\]

\noindent
\textbf{Claim.} \( a_N < u_N \).

We prove this claim by contradiction. Suppose that \( a_N \geq u_N \). Then, for any \( n \geq N \), we have \[ \frac{1}{a_n} \leq \frac{1}{u_n},\] since both sequences \(\{a_n\}_{n=1}^{\infty}\) and \(\{u_n\}_{n=1}^{\infty}\) satisfy the same recurrence relation.

Therefore, for all positive integers \( k \geq N \), it follows that  
\begin{equation}\label{eqq1}
\sum_{j=N}^{k} \frac{1}{a_j} \leq \sum_{j=N}^{k} \frac{1}{u_j}.
\end{equation} 
In particular, we have  
\begin{equation}\label{eqq2}
\sum_{j=N}^{\infty} \frac{1}{a_j} \leq \sum_{j=N}^{\infty} \frac{1}{u_j}.
\end{equation} Combining inequality (\ref{eqq2}) with \(
\sum_{j=1}^{\infty} \frac{1}{a_j} = \sum_{j=1}^{\infty} \frac{1}{u_j} = 1,
\) it follows that  
\[
\sum_{j=1}^{N-1} \frac{1}{a_j} \geq \sum_{j=1}^{N-1} \frac{1}{u_j}.
\] However, this contradicts our assumption (**) that \(\sum_{i=1}^{N-1} \frac{1}{a_i} < \sum_{i=1}^{N-1} \frac{1}{u_i}\).

Therefore, we must have \( a_N < u_N \), completing the proof of our claim.

By Lemma \ref{lemlimexist}, the limits \( \lim_{n \to \infty} a_n^{\frac{1}{2^n}} \) and \( \lim_{n \to \infty} u_n^{\frac{1}{2^n}} \) both exist. Define  
\[
\lim_{n \to \infty} a_n^{\frac{1}{2^n}} = e^{J_1} \quad \text{and} \quad \lim_{n \to \infty} u_n^{\frac{1}{2^n}} = e^{J_2}.
\]  
According to the proof of Lemma \ref{lemlimexist}, we define  
\[
J_3 = 2^{N} J_1 + \log 2 \quad \text{and} \quad J_4 = 2^{N} J_2 + \log 2.
\]  
Moreover, they satisfy the following equalities:
\[
J_3 = \log (2 a_{N} - 1) + \sum_{j=1}^{\infty} 2^{-j} \log \left( 1 + \frac{1}{(2 a_{j+N-1} - 1)^2} \right),
\]
\[
J_4 = \log (2 u_{N} - 1) + \sum_{j=1}^{\infty} 2^{-j} \log \left( 1 + \frac{1}{(2 u_{j+N-1} - 1)^2} \right).
\] We also define the following two sequences:
\[
x_n = 2 a_{n+N-1} - 1, \quad y_n = 2 u_{n+N-1} - 1.
\]
Then, the sequences \( \{x_n\}_{n=1}^{\infty} \) and \( \{y_n\}_{n=1}^{\infty} \) satisfy the following recurrence relations:
\[
2 x_{n+1} = x_n^2 + 1, \quad 2 y_{n+1} = y_n^2 + 1.
\] Since \( a_{N} < u_{N} \), we conclude that \( x_1 < y_1 \).
Therefore, we know that
\begin{align*}
J_3 &= \log (2 a_{N} - 1) + \sum_{j=1}^{\infty} 2^{-j} \log \left( 1 + \frac{1}{(2 a_{j+N-1} - 1)^2} \right)\\&= \log (x_1) + \sum_{j=1}^{\infty} 2^{-j} \log \left( 1 + \frac{1}{x_j^2} \right).
\end{align*}
And similarly,
\[
J_4 = \log (y_1) + \sum_{j=1}^{\infty} 2^{-j} \log \left( 1 + \frac{1}{y_j^2} \right).
\]
Apply Lemma \ref{lemmono}, we obtain:
\[
J_3 < J_4,
\] since \(x_1 < y_1\). It follows that \( J_1 < J_2 \), and thus we have completed the proof that
\[
\liminf_{n\to\infty} a_n^{\frac{1}{2^n}} = \lim_{n \to \infty} a_n^{\frac{1}{2^n}} < \lim_{n \to \infty} u_n^{\frac{1}{2^n}}.
\]

\end{proof}

\subsection{Construction of \( c_n \)}\label{sectionconstruction} Let \( \{u_n\}_{n=1}^{\infty} \) be the Sylvester’s sequence, and let \( a_1 < a_2 < \cdots \) be any other sequence of positive integers satisfying \(
\sum_{k=1}^\infty \frac{1}{a_k} = 1.
\) Let \( m \) be the smallest positive integer \( s \) such that \( a_i = u_i \) for all \( 1 \leq i \leq s-1 \) and \( a_s > u_s \).  
The existence of \( m \) is guaranteed since \( \{a_n\}_{n=1}^{\infty} \) and \( \{u_n\}_{n=1}^{\infty} \) are distinct sequences. At this point, we define \( c_i = u_i \) for all \( 1 \leq i \leq s-1 \).

In conjunction with the identity  
\[
\sum_{i=1}^{m-1} \frac{1}{u_i} + \frac{1}{u_m - 1} = 1, \quad \text{for } m \geq 1,
\]
we obtain \( a_m \geq u_m + 1 \) and the following equalities:  
\[
\frac{1}{u_m - 1} = \frac{1}{a_m} + \frac{1}{a_{m+1}} + \frac{1}{a_{m+2}} + \frac{1}{a_{m+3}} + \cdots,
\]
\[
\frac{1}{u_m - 1} = \frac{1}{c_m} + \frac{1}{c_{m+1}} + \frac{1}{c_{m+2}} + \frac{1}{c_{m+3}} + \cdots.
\]
Now, we define \( c_m \), \( c_{m+1} \), and \( c_{m+2} \) as follows:
\[
\frac{1}{c_m} =
\begin{cases}
    \frac{1}{u_m}, & m = 1, \\
    \frac{1}{u_m + 1}, & m > 1.
\end{cases}
\]

\[
\frac{1}{c_{m+1}} =
\begin{cases}
    \frac{1}{u_m^2}, & m = 1, \\
    \frac{2}{u_m^2}, & m > 1.
\end{cases}
\]

\[
\frac{1}{c_{m+2}} =
\begin{cases}
    \frac{2}{u_m^3 (u_m - 1) + 1}, & m = 1, \\
    \frac{2}{u_m^2 (u_m^2 - 1) + 2}, & m > 1.
\end{cases}
\] Before defining \( \{c_n\}_{n=m+3}^{\infty} \), we first claim that
\[
\frac{1}{u_m - 1} - \frac{1}{c_m} - \frac{1}{c_{m+1}} - \frac{1}{c_{m+2}}
\]
is a unit fraction.

\noindent
\begin{claim}\label{claim1} \( \frac{1}{u_m - 1} - \frac{1}{c_m} - \frac{1}{c_{m+1}} - \frac{1}{c_{m+2}} = \frac{1}{(u_m - 1) c_m c_{m+1} c_{m+2}}\) is a unit fraction.
\end{claim}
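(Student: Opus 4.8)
The plan is to prove Claim~\ref{claim1} by an explicit computation, treating separately the two cases $m=1$ and $m>1$ that occur in the definitions of $c_m,c_{m+1},c_{m+2}$. In both cases the engine is the elementary telescoping identity $\frac1{d-1}-\frac1d=\frac1{d(d-1)}$ together with its obvious rescalings; the work is just to apply it three times and compare the result with $\frac1{(u_m-1)c_mc_{m+1}c_{m+2}}$.

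When $m=1$ we have $u_m=u_1=2$, so the defining formulas become the concrete numbers $c_1=u_1=2$, $c_2=u_1^2=4$ and $c_3=\tfrac{u_1^3(u_1-1)+1}{2}=\tfrac92$. Then
\[
\frac{1}{u_1-1}-\frac1{c_1}-\frac1{c_2}-\frac1{c_3}=1-\frac12-\frac14-\frac29=\frac1{36},
\]
while $(u_1-1)c_1c_2c_3=1\cdot 2\cdot 4\cdot\tfrac92=36$, which yields the stated identity and shows the value $\tfrac1{36}$ is a unit fraction.

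When $m>1$ put $u:=u_m$; note that $u$ is odd and $u\ge 3$, since $u_n$ is odd for every $n\ge 2$ (induction from $u_2=3$ using $u_{n+1}=u_n^2-u_n+1$). The defining formulas give $c_m=u+1$, $c_{m+1}=\tfrac{u^2}{2}$, $c_{m+2}=\tfrac{u^2(u^2-1)+2}{2}$, and I would telescope in three steps:
\[
\frac1{u-1}-\frac1{u+1}=\frac2{u^2-1},\qquad \frac2{u^2-1}-\frac2{u^2}=\frac2{u^2(u^2-1)},
\]
and then, writing $D:=u^2(u^2-1)$, one has $\frac2D-\frac2{D+2}=\frac4{D(D+2)}$. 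On the other hand $(u-1)c_mc_{m+1}c_{m+2}=(u^2-1)\cdot\tfrac{u^2}{2}\cdot\tfrac{D+2}{2}=\tfrac{D(D+2)}{4}$, so the difference equals $\frac1{(u-1)c_mc_{m+1}c_{m+2}}$, as claimed. To see that this is a unit fraction in the strict sense, observe that $u$ odd forces $u^2\equiv 1\pmod 8$, hence $8\mid u^2-1$ and $4\mid D$, so $\tfrac{D(D+2)}{4}$ is a positive integer.

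I do not expect a genuine obstacle here: the argument is pure bookkeeping. The only two points requiring a little care are that the case $m=1$ must be handled by substituting $u_m=2$ directly (the $m=1$ formulas do not arise as a specialization of a single identity valid for general $u_m$), and the parity remark $8\mid u_m^2-1$ for $m>1$, which is exactly what makes the resulting denominator an integer.
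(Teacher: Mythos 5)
Your proposal is correct and follows essentially the same route as the paper's proof: a direct computation for $m=1$ and a three-step telescoping for $m>1$ ending with $\frac{4}{D(D+2)}$ where $D=u_m^2(u_m^2-1)$. The only cosmetic difference is the integrality check at the end — you invoke $8\mid u_m^2-1$ from $u_m$ odd, while the paper just notes that $2$ divides both $D$ and $D+2$; both give $4\mid D(D+2)$, so the conclusion is the same.
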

\begin{proof}[Proof of Claim \ref{claim1}]
When \( m = 1 \), we compute:
\[
\frac{1}{u_m - 1} - \frac{1}{c_m} - \frac{1}{c_{m+1}} - \frac{1}{c_{m+2}} = 1 - \frac{1}{2} - \frac{1}{4} - \frac{2}{9} = \frac{1}{36} = 1 \times \frac{1}{2} \times \frac{1}{4} \times \frac{2}{9}.
\]
Thus, the claim holds for \( m = 1 \).

For \( m > 1 \), we compute:
\begin{align*}
\frac{1}{u_m - 1} - \frac{1}{c_m} - \frac{1}{c_{m+1}} - \frac{1}{c_{m+2}}
&= \frac{1}{u_m - 1} - \frac{1}{u_m+1} - \frac{2}{u_m^2} - \frac{2}{u_m^2 (u_m^2 - 1) + 2}
\\&= \frac{2}{u_m^2 - 1} - \frac{2}{u_m^2} - \frac{2}{u_m^2 (u_m^2 - 1) + 2}
\\&= \frac{2}{u_m^2 (u_m^2 - 1)} - \frac{2}{u_m^2 (u_m^2 - 1) + 2}
\\&= \frac{4}{u_m^2 (u_m^2 - 1) \cdot [u_m^2 (u_m^2 - 1) + 2]}
\\&=\frac{1}{u_m - 1} \cdot \frac{1}{c_m} \cdot \frac{1}{c_{m+1}} \cdot \frac{1}{c_{m+2}}.
\end{align*} Since \( 2 \) divides both \( u_m^2 (u_m^2 - 1) \) and \( u_m^2 (u_m^2 - 1) + 2 \), the resulting fraction is therefore a unit fraction.

Hence, Claim \ref{claim1} holds for all \( m \geq 1 \).\end{proof}

By Claim \ref{claim1}, we can define \( \{c_n\}_{n=m+3}^{\infty} \) as the infinite greedy Egyptian underapproximation of the unit fraction  
\[
\frac{1}{(u_m - 1) c_m c_{m+1} c_{m+2}}.
\]
Thus, the sequence \( \{c_n\}_{n=1}^{\infty} \) must satisfy the following recurrence relation (see \cite[Equation (1.2)]{Badea1993}):
\begin{equation}\label{eqconstruct}
c_{n+1} = c_n^2 - c_n + 1, \quad \text{for } n \geq m+3.    
\end{equation}

Clearly, the sequence \( \{c_n\}_{n=1}^{\infty} \) must satisfy the condition (*) in Theorem \ref{mainthm0}. Next, we prove that \( \{c_n\}_{n=1}^{\infty} \) also satisfies condition (**).

\noindent
\begin{claim}\label{claim2}\(\sum_{i=1}^{m+2} \frac{1}{c_i} < \sum_{i=1}^{m+2} \frac{1}{u_i}\).
\end{claim}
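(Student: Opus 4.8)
\textbf{Proposal for the proof of Claim~\ref{claim2}.}
The plan is to reduce the reciprocal-sum inequality to a clean inequality among products of three consecutive terms. Since $c_i = u_i$ for $1 \le i \le m-1$ by construction, Claim~\ref{claim2} is equivalent to $\sum_{i=m}^{m+2}\frac{1}{c_i} < \sum_{i=m}^{m+2}\frac{1}{u_i}$. By Claim~\ref{claim1}, the left-hand side equals $\frac{1}{u_m-1} - \frac{1}{(u_m-1)\,c_m c_{m+1}c_{m+2}}$. For the right-hand side, I would apply the Sylvester identity $\sum_{i=1}^{k-1}\frac{1}{u_i} + \frac{1}{u_k-1} = 1$ twice, at $k=m$ and at $k=m+3$, obtaining $\sum_{i=m}^{m+2}\frac{1}{u_i} = \frac{1}{u_m-1} - \frac{1}{u_{m+3}-1}$. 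Thus Claim~\ref{claim2} becomes $(u_m-1)\,c_m c_{m+1}c_{m+2} < u_{m+3}-1$. Finally, from the defining recurrence $u_{n+1} = \prod_{k=1}^{n}u_k + 1$ one has $u_{m+3}-1 = \prod_{k=1}^{m+2}u_k$ and $u_m-1 = \prod_{k=1}^{m-1}u_k$, so $u_{m+3}-1 = (u_m-1)\,u_m u_{m+1}u_{m+2}$, and the claim reduces to
\[
c_m c_{m+1}c_{m+2} < u_m u_{m+1}u_{m+2}.
\]
(Here the empty-product conventions make the reduction valid already at $m=1$, where $u_m-1=1$ and $c_m=u_m$.)

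For $m=1$ this reduced inequality is immediate: $c_1 c_2 c_3 = 2\cdot 4\cdot \tfrac{9}{2} = 36 < 42 = 2\cdot 3\cdot 7 = u_1 u_2 u_3$.

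For $m>1$ we have $u_m \ge u_2 = 3$, and I would establish $c_m c_{m+1}c_{m+2} < u_m u_{m+1}u_{m+2}$ factorwise. Using $c_m = u_m+1$, $c_{m+1} = \tfrac{u_m^2}{2}$ and $u_{m+1} = u_m^2-u_m+1$, a short computation gives
\[
u_m u_{m+1} - c_m c_{m+1} = \frac{u_m(u_m-1)(u_m-2)}{2} > 0 .
\]
Similarly, writing $u_{m+1}-1 = u_m(u_m-1)$ so that $u_{m+2} = u_m^2(u_m-1)^2 + u_m(u_m-1) + 1$, and using $c_{m+2} = \tfrac{u_m^2(u_m^2-1)+2}{2}$, one obtains
\[
u_{m+2} - c_{m+2} = \frac{u_m(u_m-1)^2(u_m-2)}{2} > 0 .
\]
Since all quantities involved are positive, multiplying $c_m c_{m+1} < u_m u_{m+1}$ by $c_{m+2} < u_{m+2}$ yields $c_m c_{m+1}c_{m+2} < u_m u_{m+1}u_{m+2}$, which by the reduction proves Claim~\ref{claim2}.

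The computations here are entirely elementary, so no deep obstacle is expected; the only step requiring care is the reduction itself, where one must correctly combine the two instances of the Sylvester identity with the product formulas $u_{n+1}-1 = \prod_{k\le n}u_k$, and keep the case $m=1$ (where $c_m = u_m$ rather than $u_m+1$) separate from $m>1$.
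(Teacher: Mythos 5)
Your proof is correct, and it takes a genuinely different route from the paper. The paper, after the same initial reduction to $\sum_{i=m}^{m+2}\frac{1}{c_i} < \sum_{i=m}^{m+2}\frac{1}{u_i}$, handles $m>1$ by writing both three-term sums explicitly in the variable $x=u_m$ and factoring the difference as a single rational function,
\[
F(x)=\frac{(x-2)\,(3x^4-4x^3+3x^2-2x+2)}{x^2 (x+1)(x^2-x+1)(x^4-x^2+2)(x^4-2x^3+2x^2-x+1)},
\]
whose positivity for $x\ge 3$ settles the claim; this is a direct but computation-heavy verification. You instead exploit the structure already available: Claim~\ref{claim1} converts the $c$-side into $\frac{1}{u_m-1}-\frac{1}{(u_m-1)c_mc_{m+1}c_{m+2}}$, the Sylvester identity telescopes the $u$-side into $\frac{1}{u_m-1}-\frac{1}{u_{m+3}-1}$, and the product formula $u_{n+1}-1=\prod_{k\le n}u_k$ reduces everything to $c_mc_{m+1}c_{m+2}<u_mu_{m+1}u_{m+2}$, which you then verify factorwise via the clean factorizations $u_mu_{m+1}-c_mc_{m+1}=\tfrac{u_m(u_m-1)(u_m-2)}{2}$ and $u_{m+2}-c_{m+2}=\tfrac{u_m(u_m-1)^2(u_m-2)}{2}$ (both of which I checked and are right, with strict positivity from $u_m\ge 3$ when $m>1$, and the $m=1$ case disposed of by $36<42$). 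Your reduction buys a conceptually cleaner inequality and much lighter algebra, at the cost of invoking Claim~\ref{claim1} and the telescoping identity; the paper's computation is self-contained at this point but requires factoring a larger rational expression. Both arguments are complete and elementary, and your multiplication step is justified since all factors are positive.
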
 
\begin{proof}[Proof of Claim \ref{claim2}]
Since \( c_i = u_i \) for all \( 1 \leq i \leq m-1 \), it suffices to prove that  
\begin{equation}\label{con1}
\frac{1}{c_m} + \frac{1}{c_{m+1}} + \frac{1}{c_{m+2}}< \frac{1}{u_m} + \frac{1}{u_{m+1}} + \frac{1}{u_{m+2}}.
\end{equation}
For \( m = 1 \), it is clear that  
\[
\frac{1}{2} + \frac{1}{4} + \frac{2}{9}< \frac{1}{2} + \frac{1}{3} + \frac{1}{7}.
\]
Thus, we only need to consider the case when \( m > 1 \).  

Clearly, we have  
\[
\frac{1}{c_m} + \frac{1}{c_{m+1}} + \frac{1}{c_{m+2}} =  \frac{1}{u_m+1} + \frac{2}{u_{m}^2} + \frac{2}{u_m^2 (u_m^2 - 1) + 2}\]and\[ \frac{1}{u_m} + \frac{1}{u_{m+1}} + \frac{1}{u_{m+2}} = \frac{1}{u_m} + \frac{1}{u_{m}^2 - u_m +1} + \frac{1}{(u_{m}^2 - u_m+1)(u_{m}^2 - u_m) +1}.
\] Since the function  
\begin{align*}
F(x) &= \frac{1}{x} + \frac{1}{x^2 - x + 1} + \frac{1}{(x^2 - x+1)(x^2 - x) + 1} - \frac{1}{x + 1} - \frac{2}{x^2}  - \frac{2}{x^2(x^2-1)+2}\\&=\frac{( x - 2) (3x^4- 4x^3+ 3x^2-2x+2 )}
{x^2 (x + 1) (x^2- x+1) (x^4- x^2+2) (x^4- 2x^3+ 2x^2- x+1)}
\end{align*} is always positive for \( x \geq 3 \), we conclude the proof of Claim \ref{claim2}. \end{proof}

Before presenting the proof of Theorem \ref{mainthm0}, we establish a useful lemma, whose proof follows a similar approach to that of Nathanson \cite[Theorem 5]{Nat23}.

\begin{lem}\label{lemmacomstructc}
Let \( a_1 < a_2 < \cdots \) be any sequence of positive integers distinct from the Sylvester's sequence, satisfying \(\sum_{i=1}^\infty \frac{1}{a_i} = 1\). Let \( m \) be the smallest positive integer \( s \) such that \( a_i = u_i \) for all \( 1 \leq i \leq s-1 \) and \( a_s > u_s \). Define the sequence \( \{c_n\}_{n=1}^{\infty} \) as the construction given at the beginning of Section \ref{sectionconstruction}. Then, for any \( k \geq m \), we have 
\begin{equation}\label{eqlemma31}
\sum_{i=1}^{k} \frac{1}{a_i} \leq \sum_{i=1}^{k} \frac{1}{c_i}.    
\end{equation}
\end{lem}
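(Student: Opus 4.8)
The plan is to prove the inequality $\sum_{i=1}^k 1/a_i \leq \sum_{i=1}^k 1/c_i$ for all $k \geq m$ by splitting into cases according to the size of $k$ relative to the "transition zone" $\{m, m+1, m+2\}$ where the sequence $\{c_n\}$ is hand-crafted, and then invoking the greedy/best-underapproximation machinery (Proposition \ref{lem:Nathanson} and Proposition \ref{lemNathanson}) for the tail where $\{c_n\}$ is genuinely greedy. Since $c_i = u_i = a_i$ for $1 \leq i \leq m-1$, it suffices throughout to compare the partial sums $\sum_{i=m}^k 1/a_i$ and $\sum_{i=m}^k 1/c_i$, both of which are underapproximations of $\frac{1}{u_m-1}$ (strict, in the case of $\{a_i\}$, as long as $k$ is finite and the $a_i$ do not terminate — which holds because $\{a_i\}$ is infinite).

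First I would handle the small cases $k = m, m+1, m+2$ directly by the explicit formulas for $c_m, c_{m+1}, c_{m+2}$. For $k = m$: from $\sum_{i=m}^\infty 1/a_i = 1/(u_m-1)$ and $a_m \geq u_m+1 \geq c_m$ (using $c_m = u_m+1$ when $m>1$, and $c_1 = u_1 = 2 \leq a_1$ trivially when $m=1$ — here note $a_m > u_m$ forces $a_m \geq u_m + 1$), we get $1/a_m \leq 1/c_m$ immediately. For $k = m+1$ and $k = m+2$ the cleanest route is to observe that $\{c_m, c_{m+1}, c_{m+2}\}$ is (a tuple whose reciprocal sum equals that of) the greedy few-term approximation structure of $1/(u_m-1)$ adjusted as in the construction, so that among all underapproximations of $1/(u_m-1)$ with first term $\geq a_m$, the partial sums through index $m+1$ and $m+2$ are dominated by those of $\{c_i\}$; more concretely I would apply Corollary \ref{cor:Nathanson} (with $\lambda$ replaced by $1/(u_m-1)$, whose best underapproximation begins $c_m, c_{m+1}, c_{m+2}, \dots$ by the construction as a greedy object after the first steps) — but one must first check the hypothesis that $\{c_m, c_{m+1}, c_{m+2}, \dots\}$ really is the best (equivalently, greedy, via Nathanson \cite[Theorem 5]{Nat23}) underapproximation of the relevant unit-type fraction $1/(u_m-1)$; this reduces to verifying that $u_m - 1$ satisfies the "$p \mid q+1$" divisibility criterion, which it does since $p = 1$.

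The main case is $k \geq m+3$. Here $\{c_n\}_{n \geq m+3}$ is, by construction, the infinite greedy Egyptian underapproximation of the unit fraction $\frac{1}{(u_m-1)c_m c_{m+1} c_{m+2}}$, hence coincides term-by-term with the best underapproximation of that unit fraction (again by the $p \mid q+1$ criterion with $p=1$, so Nathanson's theorem applies). Meanwhile $\{a_i\}_{i \geq m+3}$ is some underapproximation of $\frac{1}{u_m-1} - \frac{1}{a_m} - \frac{1}{a_{m+1}} - \frac{1}{a_{m+2}}$, and the already-established small-case inequalities give $\sum_{i=m}^{m+2} 1/a_i \leq \sum_{i=m}^{m+2} 1/c_i$, so the "remaining mass" available to the $a$-tail is at most that available to the $c$-tail. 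Applying Proposition \ref{lem:Nathanson} (the best-underapproximation dominates every strict underapproximation of the same number, for every fixed number of terms) to the tail $\{a_i\}_{i \geq m+3}$ against the best underapproximation of $\frac{1}{(u_m-1)c_m c_{m+1} c_{m+2}}$ — using that the latter has more (or equal) total mass to distribute — yields $\sum_{i=m+3}^k 1/a_i \leq \sum_{i=m+3}^k 1/c_i$, and adding the small-case inequality finishes it. The subtlety here, and what I expect to be the main obstacle, is that Proposition \ref{lem:Nathanson} compares underapproximations \emph{of the same target $\lambda$}, whereas the $a$-tail and $c$-tail approximate slightly different residual values; bridging this requires either a monotonicity statement (best underapproximations of a larger $\lambda$ dominate those of a smaller $\lambda$, term by term — which follows from the greedy characterization) or a direct comparison argument inserting an intermediate underapproximation. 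Getting this comparison watertight — in particular checking that the greedy tail of $\{c_i\}$ is not "wasteful" relative to what $\{a_i\}$ could do with its strictly larger first few denominators — is the crux, and I would carry it out by combining the multiplicative criterion of Proposition \ref{lemNathanson} with an induction on $k$ rather than a single global estimate.
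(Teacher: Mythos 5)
There is a genuine gap, and it sits at the center of your plan. You propose to treat \( \{c_m, c_{m+1}, c_{m+2}, \dots\} \) as the greedy (hence, via the \( p \mid q+1 \) criterion, best) Egyptian underapproximation of \( \frac{1}{u_m-1} \) and then invoke Proposition \ref{lem:Nathanson} / Corollary \ref{cor:Nathanson} to dominate the \( a \)-side. But it is not: the greedy underapproximation of \( \frac{1}{u_m-1} \) is the Sylvester-type tail \( u_m, u_{m+1}, u_{m+2}, \dots \), which strictly dominates \( \{c_i\} \) — that is precisely the point of the construction (cf.\ Claim \ref{claim2}) — and for \( m \geq 2 \) the term \( c_{m+1} = u_m^2/2 \) is not even an integer, so \( \{c_i\}_{i \geq m} \) is not an Egyptian underapproximation of anything and Nathanson's best-vs-greedy results cannot be applied to it. Best-underapproximation machinery only ever puts \( \{u_i\} \) on the large side, which is the wrong comparison here: the lemma asks you to dominate an arbitrary \( \{a_i\} \) by the deliberately suboptimal \( \{c_i\} \). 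Your decomposition for \( k \geq m+3 \) is also structurally flawed: from \( \sum_{i=m}^{m+2} 1/a_i \leq \sum_{i=m}^{m+2} 1/c_i \) the residual mass left to the \( a \)-tail is at least (not ``at most'', as you write) that left to the \( c \)-tail, so the tail-only inequality \( \sum_{i=m+3}^{k} 1/a_i \leq \sum_{i=m+3}^{k} 1/c_i \) is not forced and can fail; only the combined inequality starting at index \( m \) is true, and no monotonicity-in-\( \lambda \) statement rescues this, since the larger residual belongs to the \( a \)-side and \( \{a_i\} \) is arbitrary rather than greedy.

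What closes the argument in the paper, and is missing from your proposal, is a single induction on \( k \) with a dichotomy on the \emph{products}. If \( \prod_{i=m}^{k} a_i < \prod_{i=m}^{k} c_i \), one uses that the \( c \)-leftover is \emph{exactly} the unit fraction \( \frac{1}{(u_m-1)\prod_{i=m}^{k} c_i} \) (Claim \ref{claim1} together with the greedy recursion for \( n \geq m+3 \)), while the \( a \)-leftover \( \frac{1}{u_m-1} - \sum_{i=m}^{k} 1/a_i \) is a positive rational whose denominator divides \( (u_m-1)\prod_{i=m}^{k} a_i \) and hence is at least \( \frac{1}{(u_m-1)\prod_{i=m}^{k} a_i} \); comparing the two leftovers gives the inequality at once. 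If instead \( \prod_{i=m}^{k} a_i \geq \prod_{i=m}^{k} c_i \), one takes the largest \( t \geq m \) with \( \prod_{i=t}^{k} a_i \geq \prod_{i=t}^{k} c_i \), checks by maximality that every initial segment from \( t \) inherits the product dominance, applies Proposition \ref{lemNathanson} on \( [t,k] \), and uses the induction hypothesis on \( [m,t-1] \). You do mention Proposition \ref{lemNathanson} and induction as a fallback, but without the exact-unit-fraction leftover on the \( c \)-side and the denominator-divisibility lower bound on the \( a \)-side, the comparison of two different residuals that you flag as ``the crux'' cannot be carried out, so the proposal as written does not prove the lemma.
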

\begin{proof}
Since \( c_i = u_i \) for all \( 1 \leq i \leq m-1 \), it suffices to prove that  
\begin{equation}\label{construct1}
\sum_{i=m}^{k} \frac{1}{a_i} \leq \sum_{i=m}^{k} \frac{1}{c_i}.
\end{equation} We proceed by mathematical induction.  

For \( k = m \), inequality (\ref{construct1}) holds trivially.  

For \( k = m+1 \), it suffices to prove that  
\begin{equation}\label{construct2}
\frac{1}{a_m} + \frac{1}{a_{m+1}} \leq \frac{1}{c_m} + \frac{1}{c_{m+1}}.
\end{equation}
For \( m = 1 \), it is clear that  
\[
\frac{1}{a_m} + \frac{1}{a_{m+1}} \leq \frac{1}{3} + \frac{1}{4} < \frac{1}{2} + \frac{1}{4} = \frac{1}{c_m} + \frac{1}{c_{m+1}}.
\]
For \( m \geq 2 \), if \(\frac{1}{a_m a_{m+1}} > \frac{1}{c_m c_{m+1}}\),  
then we obtain  
\begin{align*}
\frac{1}{u_m - 1} - \frac{1}{c_m} - \frac{1}{c_{m+1}}&= \frac{2}{u_m^2(u_m^2 - 1)} \\&= \frac{1}{(u_m - 1) c_m c_{m+1}} \\
&< \frac{1}{(u_m - 1) a_m a_{m+1}} \\
&\leq \frac{1}{u_m - 1} - \frac{1}{a_m} - \frac{1}{a_{m+1}}.
\end{align*} which means inequality (\ref{construct2}) holds. Thus, it remains to consider the case where \(\frac{1}{a_m a_{m+1}} \leq \frac{1}{c_m c_{m+1}}\).
Now, we have the following inequalities:  
\[
\frac{1}{a_m} \geq \frac{1}{a_{m+1}}, \quad \frac{1}{c_m} \geq \frac{1}{c_{m+1}}, \quad \frac{1}{a_m} \leq \frac{1}{c_m}, \quad \frac{1}{a_m a_{m+1}} \leq \frac{1}{c_m c_{m+1}}.
\]
Thus, applying Proposition \ref{lemNathanson}, we obtain that inequality (\ref{construct2}) holds.

Suppose that the inequality (\ref{construct1}) holds for all \( m \leq k < n \), where \( n \geq m+2 \).  
Now, we consider the case \( k = n \), that is, we need to prove that  
\begin{equation}\label{construct3}
\sum_{i=m}^{n} \frac{1}{a_i} \leq \sum_{i=m}^{n} \frac{1}{c_i}
\end{equation} holds.

If \( \prod_{i=m}^{n} \frac{1}{a_i} > \prod_{i=m}^{n} \frac{1}{c_i} \), then by Claim \ref{claim1}, we have
\begin{align*}
\frac{1}{u_m - 1} - \sum_{i=m}^{n} \frac{1}{c_i} &= \frac{1}{(u_m - 1) c_m c_{m+1} c_{m+2}} - \sum_{i=m+3}^{n} \frac{1}{c_i} \\&= \frac{1}{(u_m - 1) \prod_{i=m}^{n} c_i} \\&< \frac{1}{(u_m - 1) \prod_{i=m}^{n} a_i} \\&\leq \frac{1}{u_m - 1} - \sum_{i=m}^{n} \frac{1}{a_i},
\end{align*}
which means \eqref{construct3} holds. Thus, we only need to consider the exceptional case where \( \prod_{i=m}^{n} \frac{1}{a_i} \leq \prod_{i=m}^{n} \frac{1}{c_i} \). Define \( t \geq m \) as the largest integer such that
\begin{equation}\label{construct4}
\prod_{i=t}^{n} \frac{1}{a_i} \leq \prod_{i=t}^{n} \frac{1}{c_i}.
\end{equation}
We shall prove that for all \( j \in \{1, \dots, n-t\} \),
\begin{equation}\label{construct5}
\prod_{i=t}^{t+j} \frac{1}{a_i} \leq \prod_{i=t}^{t+j} \frac{1}{c_i}.
\end{equation}
If not, then there exists some \( s \in \{1, \dots, n-t-1\} \) such that
\[
\prod_{i=t}^{t+s} \frac{1}{c_i} < \prod_{i=t}^{t+s} \frac{1}{a_i}.
\]
Using (\ref{construct4}), we obtain:
\[
\prod_{i=t+s+1}^{n} \frac{1}{a_i} \leq \frac{\prod_{i=t}^{t+s} (a_i/c_i)}{\prod_{i=t+s+1}^{n} c_i} < \prod_{i=t+s+1}^{n} \frac{1}{c_i}.
\]
This contradicts the maximality of \( t \). This proves (\ref{construct5}).

Applying Proposition \ref{lemNathanson} to the increasing sequences \(\{a_i\}_{i=t}^n \) and \(\{c_i\}_{i=t}^n \), we obtain
\begin{equation}\label{construct6}
\sum_{i=t}^{n} \frac{1}{a_i} \leq \sum_{i=t}^{n} \frac{1}{c_i}
\end{equation} holds. And the induction hypothesis implies
\begin{equation}\label{construct7}
\sum_{i=m}^{t-1} \frac{1}{a_i} \leq \sum_{i=m}^{t-1} \frac{1}{c_i}
\end{equation} holds. Thus, combining (\ref{construct6}) and (\ref{construct7}), we conclude that (\ref{construct3}) holds. Hence, the proof is complete. \end{proof}

We are now ready to present the following
\begin{proof}[Proof of Theorem \ref{mainthm0}]
Let \( m \) be the smallest positive integer \( s \) such that \( a_i = u_i \) for all \( 1 \leq i \leq s-1 \) and \( a_s > u_s \). We then define the sequence \( \{c_n\}_{n=1}^{\infty} \) as the construction given at the beginning of Section \ref{sectionconstruction}. Thus, in conjunction with Eq.~(\ref{eqconstruct}) and Claim \ref{claim2}, we conclude that the sequence \( \{c_n\}_{n=1}^{\infty} \) satisfies both conditions (*) and (**), and moreover,  
\[
c_{n+1} = c_n^2 - c_n + 1, \quad \text{for } n \geq m+3.
\]

Now, using proof by contradiction, suppose that  
\[
\liminf_{n\to\infty} a_n^{\frac{1}{2^n}} > \lim_{n\to\infty} c_n^{\frac{1}{2^n}}.
\]
Then the set \(\{n \in \mathbb{Z}_{> 0} \mid a_n \leq c_n \}\) is finite, implying that there exists a positive integer \( N \geq m+3\) such that for all \( n > N \), we have \( a_n > c_n \). Hence, we obtain  
\begin{equation}\label{con2}
\sum_{i=N+1}^{\infty} \frac{1}{a_i} < \sum_{i=N+1}^{\infty} \frac{1}{c_i}.
\end{equation} 

Furthermore, setting \( k = N \) in inequality (\ref{eqlemma31}) of Lemma \ref{lemmacomstructc} yields
\begin{equation}\label{con3}
\sum_{i=1}^{N} \frac{1}{a_i} \leq \sum_{i=1}^{N} \frac{1}{c_i}.
\end{equation} Combining (\ref{con2}) and (\ref{con3}), we conclude that  
\[
\sum_{i=1}^{\infty} \frac{1}{a_i} = \sum_{i=1}^{N} \frac{1}{a_i} + \sum_{i=N+1}^{\infty} \frac{1}{a_i}
< \sum_{i=1}^{N} \frac{1}{c_i} + \sum_{i=N+1}^{\infty} \frac{1}{c_i}
= \sum_{i=1}^{\infty} \frac{1}{c_i} = 1.
\]
However, this contradicts the assumption that \(\sum_{i=1}^{\infty} \frac{1}{a_i} = 1.\) Thus, we have completed the proof that \[
\liminf_{n\to\infty} a_n^{\frac{1}{2^n}} \leq \lim_{n\to\infty} c_n^{\frac{1}{2^n}}.
\]

\end{proof}

\subsection{Proof of Corollary \ref{maincor}}
By Theorem \ref{mainthm0}, we can construct an eventually Sylvester sequence \( \{c_n\}_{n=1}^{\infty} \) of positive numbers that satisfies the recurrence relation  
\[
c_{n+1} = c_n^2 - c_n + 1, \quad \text{for } n \geq N,
\]
and the following two conditions:
\begin{itemize}
    \item[(*)] \(\sum_{i=1}^\infty \frac{1}{c_i} = 1\);
    \item[(**)] \(\sum_{i=1}^{N-1} \frac{1}{c_i} < \sum_{i=1}^{N-1} \frac{1}{u_i}\).
\end{itemize}
Moreover, this sequence satisfies  
\[
\liminf_{n\to\infty} a_n^{\frac{1}{2^n}} \leq \lim_{n\to\infty} c_n^{\frac{1}{2^n}}.
\]
Furthermore, by Theorem \ref{mainthm}, we have
\[
\liminf_{n\to\infty} c_n^{\frac{1}{2^n}} = \lim_{n \to \infty} c_n^{\frac{1}{2^n}} < \lim_{n \to \infty} u_n^{\frac{1}{2^n}}.
\]
Thus, we conclude that  
\[
\liminf_{n\to\infty} a_n^{\frac{1}{2^n}} < \lim_{n\to\infty} u_n^{\frac{1}{2^n}}.
\]
This completes the proof.\hfill $\square$

\section{Proof of Non-constructive Method}\label{sec4}
\subsection{Proof of Theorem \ref{mainthm1}}\label{sec41}
We are now ready to present the following
\begin{proof}[Proof of Theorem \ref{mainthm1}]

By the definition of an eventually Sylvester sequence, there exists a positive integer \( N_1 > 0 \) such that \( \{a_n\}_{n=1}^{\infty} \) satisfies the recurrence relation:  
\[
a_{n+1} = a_n^2 - a_n + 1, \quad \text{for } n \geq N_1.
\] Since Conjecture \ref{conj2} holds, the best Egyptian underapproximation \( \{u_n\}_{n=1}^{\infty} \) of \( \lambda \) eventually becomes the infinite greedy Egyptian underapproximation of the rational number \( \lambda - R_{n_0(\lambda)}(\lambda) \). Thus, by combining this with Corollary \ref{lemfinalsylvester}, we conclude that \( \{u_n\}_{n=1}^{\infty} \) also satisfies the recurrence relation:
\[
u_{n+1} = u_n^2 - u_n + 1, \quad \text{for } n \geq N_2,
\]
where \( N_2 > 0 \) is a positive integer.

Let \( N = \max\{N_1, N_2\} \). Note that \( N \geq 2 \) because otherwise the sequences \( \{a_n\}_{n=1}^{\infty} \) and \( \{u_n\}_{n=1}^{\infty} \) would coincide entirely, contradicting the assumption that they are distinct.

\noindent
\textbf{Claim.} \( a_N < u_N \).

We prove this claim by contradiction. Suppose that \( a_N \geq u_N \). Then, for any \( n \geq N \), we have \[ \frac{1}{a_n} \leq \frac{1}{u_n},\] since both sequences \(\{a_n\}_{n=1}^{\infty}\) and \(\{u_n\}_{n=1}^{\infty}\) satisfy the same recurrence relation.

Therefore, for all positive integers \( k \geq N \), it follows that  
\begin{equation}\label{eeqq1}
\sum_{i=N}^{k} \frac{1}{a_i} \leq \sum_{i=N}^{k} \frac{1}{u_i}.
\end{equation} 
In particular, we have  
\begin{equation}\label{eeqq2}
\sum_{i=N}^{\infty} \frac{1}{a_i} \leq \sum_{i=N}^{\infty} \frac{1}{u_i}.
\end{equation} Combining inequality (\ref{eeqq2}) with \(
\sum_{i=1}^{\infty} \frac{1}{a_i} = \sum_{i=1}^{\infty} \frac{1}{u_i} = 1,
\) it follows that  
\[
\sum_{i=1}^{N-1} \frac{1}{a_i} \geq \sum_{i=1}^{N-1} \frac{1}{u_i}.
\]

Now, since the best Egyptian underapproximation \( \{u_n\}_{n=1}^{\infty} \) of \( \lambda \) is unique, it follows from Corollary \ref{cor:Nathanson} that  
\[
b_i = u_i \quad \text{for } 1 \leq i \leq N-1,
\]  
where \( \{b_i\}_{i=1}^{N-1} \) is the increasing rearrangement of \( \{a_i\}_{i=1}^{N-1} \). Thus,  
\begin{equation}\label{eeqq3}
\sum_{i=1}^{N-1} \frac{1}{a_i} = \sum_{i=1}^{N-1} \frac{1}{b_i} = \sum_{i=1}^{N-1} \frac{1}{u_i}.
\end{equation} Combining (\ref{eeqq2}) and (\ref{eeqq3}), we get  
\begin{equation}\label{eeqq4}
1 = \sum_{i=1}^{\infty} \frac{1}{a_i} \leq \sum_{i=1}^{\infty} \frac{1}{u_i} = 1.
\end{equation}
Since the inequality in (\ref{eeqq4}) must hold as an equality, it follows that (\ref{eeqq2}) also holds as an equality. That is, we have \begin{equation}\label{eeqq5}
\sum_{i=N}^{\infty} \frac{1}{a_j}=\sum_{i=N}^{\infty} \frac{1}{u_j}.\end{equation}

If \( a_N = u_N \), then \( a_i = u_i \) for all \( i \geq N \), since both sequences \(\{a_n\}_{n=N}^{\infty}\) and \(\{u_n\}_{n=N}^{\infty}\) satisfy the same recurrence relation. This contradicts the assumption that \(\{b_n\}_{n=1}^{\infty}\) and \(\{u_n\}_{n=1}^{\infty}\) are distinct sequences. If \( a_N > u_N \), then \( a_i > u_i \) for all \( i \geq N \), which contradicts Eq.~(\ref{eeqq5}).

Therefore, we must have \( a_N < u_N \), completing the proof of this claim.

The remaining steps are identical to those in the proof of Theorem \ref{mainthm}, thus completing the proof of Theorem \ref{mainthm1}. \end{proof}

\subsection{Proof of the Existence of \( c_n \)}\label{secexist}

We are now ready to present the following
\begin{proof}[Proof of Theorem \ref{mainthm3}]
Let \( N_1 \) be the smallest positive integer \(s\) such that \( a_s \notin \{u_k \mid k \geq 1\} \). The existence of \( N_1 \) is guaranteed since \(\{a_n\}_{n=1}^{\infty}\) and \(\{u_n\}_{n=1}^{\infty}\) are distinct sequences.

Then, we define  
\[
c_i = a_i \quad \text{for } 1 \leq i \leq N_1.
\] Observing that  
\[
\lambda = \frac{1}{a_1} + \frac{1}{a_2} + \cdots + \frac{1}{a_{N_1}} + \frac{1}{a_{N_1+1}} + \frac{1}{a_{N_1+2}} + \cdots,
\]
we define \(\mu = 1 - \sum_{i=1}^{N_1} \frac{1}{a_i}\), then we obtain  
\[
\mu = \frac{1}{a_{N_1+1}} + \frac{1}{a_{N_1+2}} + \frac{1}{a_{N_1+3}} + \cdots.
\]

Since the best \( n \)-term Egyptian underapproximation of any given \( x \in (0, 1) \) exists (see \cite[Theorem 3]{Nat23}), the best \( n \)-term Egyptian underapproximation of \(\mu\) must exist. Since Conjecture \ref{conj2} holds, we obtain the existence of an integer \( n_0 = n_0(\mu) \) such that, for all \( n \geq n_0 + 1 \),  
\begin{equation}\label{eqconj2}
R_n(\mu) = R_{n_0}(\mu) + R_{n-n_0}\left( \mu - R_{n_0}(\mu) \right),
\end{equation}
and the best \( (n-n_0) \)-term underapproximation \( R_{n-n_0}\left( \mu - R_{n_0}(\mu) \right) \)  
is always constructed by the greedy algorithm.

We now define the sequence \(\{c_n\}_{n=N_1+1}^\infty\) as follows:  
Define \(\frac{1}{c_{N_1+1}} + \frac{1}{c_{N_1+2}} + \cdots + \frac{1}{c_{N_1+n_0}}\) to be \( R_{n_0}(\mu) \). And starting from \( c_{N_1+n_0} \), each subsequent term is obtained from the previous one using the greedy algorithm.

Thus, by Eq.~(\ref{eqconj2}), the sequence \(\{c_i\}_{i=N_1+1}^{N_1+n}\) is the best \( n \)-term Egyptian underapproximation of \( \mu \) for \( n \geq n_0 + 1 \). Hence, the inequality  
\begin{equation}\label{ineq4}
\frac{1}{a_{N_1+1}} + \frac{1}{a_{N_1+2}} + \cdots + \frac{1}{a_{N_1+n}}    \leq    \frac{1}{c_{N_1+1}} + \frac{1}{c_{N_1+2}} + \cdots + \frac{1}{c_{N_1+n}}
\end{equation} holds for every \( n \geq n_0 + 1 \).

In this way, the sequence \( \{c_n\}_{n=1}^{\infty} \) we have defined not only satisfies \(\sum_{i=1}^\infty \frac{1}{c_i} = \lambda\), but is also distinct from \( \{u_n\}_{n=1}^{\infty} \) as a set, since \( c_{N_1} = a_{N_1} \notin \{u_k \mid k \geq 1\} \). Moreover, since \(\{c_n\}_{n=1}^{\infty}\) eventually becomes the infinite greedy Egyptian underapproximation of the rational number \(\mu - R_{n_0}(\mu)\), it follows that \(\{c_n\}_{n=1}^{\infty}\) is eventually Sylvester by Corollary \ref{lemfinalsylvester}. 

Now, using proof by contradiction, suppose that  
\[
\liminf_{n\to\infty} a_n^{\frac{1}{2^n}} > \lim_{n\to\infty} c_n^{\frac{1}{2^n}}.
\]
Then the set \(\{n \in \mathbb{Z}_{> 0} \mid a_n \leq c_n \}\) is finite, implying that there exists a positive integer \( N_2 \geq N_1 + n_0 + 1\) such that for all \( n > N_2 \), we have \( a_n > c_n \). Hence, we obtain  
\begin{equation}\label{ineq3}
\sum_{i=N_2+1}^{\infty} \frac{1}{a_i} < \sum_{i=N_2+1}^{\infty} \frac{1}{c_i}.
\end{equation} Furthermore, let \(n=N_2 - N_1 \) in (\ref{ineq4}), we know that  
\begin{equation}\label{ineq5}
\frac{1}{a_{N_1+1}} + \frac{1}{a_{N_1+2}} + \cdots + \frac{1}{a_{N_2}}    \leq    \frac{1}{c_{N_1+1}} + \frac{1}{c_{N_1+2}} + \cdots + \frac{1}{c_{N_2}}.  
\end{equation} Combining (\ref{ineq5}) with \(c_j = a_j\) for \( 1 \leq j \leq N_1\), it follows that 
\begin{equation}\label{ineq6}
\sum_{i=1}^{N_2} \frac{1}{a_i} \leq \sum_{i=1}^{N_2} \frac{1}{c_i}.
\end{equation} Combining (\ref{ineq3}) and (\ref{ineq6}), we conclude that  
\[
\sum_{i=1}^{\infty} \frac{1}{a_i} = \sum_{i=1}^{N_2} \frac{1}{a_i} + \sum_{i=N_2+1}^{\infty} \frac{1}{a_i}
< \sum_{i=1}^{N_2} \frac{1}{c_i} + \sum_{i=N_2+1}^{\infty} \frac{1}{c_i}
= \sum_{i=1}^{\infty} \frac{1}{c_i} = \lambda.
\]
However, this contradicts the assumption that \(\sum_{i=1}^{\infty} \frac{1}{a_i} = \lambda.\) Thus, we have completed the proof that \[
\liminf_{n\to\infty} a_n^{\frac{1}{2^n}} \leq \lim_{n\to\infty} c_n^{\frac{1}{2^n}}.
\]
\end{proof}

\subsection{Proof of Theorem \ref{mainthm2}}
By Theorem \ref{mainthm3}, there exists an eventually Sylvester sequence \(\{c_n\}_{n=1}^{\infty}\) of positive integers satisfying \(\sum_{i=1}^\infty \frac{1}{c_i} = \lambda\), such that \( \{c_n\}_{n=1}^{\infty} \) and \( \{u_n\}_{n=1}^{\infty} \) are distinct as sets, and  
\[
\liminf_{n\to\infty} a_n^{\frac{1}{2^n}} \leq \lim_{n\to\infty} c_n^{\frac{1}{2^n}}.
\] Furthermore, by Theorem \ref{mainthm1}, we have  
\[
\liminf_{n\to\infty} c_n^{\frac{1}{2^n}} = \lim_{n \to \infty} c_n^{\frac{1}{2^n}} < \lim_{n \to \infty} u_n^{\frac{1}{2^n}}.
\]
Thus, we conclude that 
\[
\liminf_{n\to\infty} a_n^{\frac{1}{2^n}} < \lim_{n\to\infty} u_n^{\frac{1}{2^n}}.
\]
This completes the proof.\hfill $\square$

\section{Concluding Remarks and Open Problems}\label{sec:conclusion}

While our constructive method helps resolve Conjecture~\ref{conj1}, addressing Theorem~\ref{mainthm2} without assuming the validity of Conjecture~\ref{conj2} remains out of reach. Solving Theorem~\ref{mainthm2} using a constructive approach similar to that in Section~\ref{sectionconstruction} appears to be nearly impossible. We regard this as a natural generalization of Conjecture~\ref{conj1}, and formally state it as the following conjecture:

\begin{conj}\label{conj3}
Let \( 0 < \frac{p}{q} \leq 1 \) be an irreducible fraction whose best \( m \)-term Egyptian underapproximation is unique for every positive integer \( m \). Let \( \{u_n\}_{n=1}^{\infty} \) denote the best Egyptian underapproximation of \( \frac{p}{q} \), and let \( a_1 < a_2 < \cdots \) be any other sequence of positive integers satisfying 
\[
\sum_{i=1}^\infty \frac{1}{a_i} = \frac{p}{q}.
\]  
Then,
\[
\liminf_{n\to\infty} a_n^{\frac{1}{2^n}} < \lim_{n\to\infty} u_n^{\frac{1}{2^n}}.
\]   
\end{conj}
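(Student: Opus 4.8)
The plan is to remove the use of Conjecture~\ref{conj2} from the argument of Theorem~\ref{mainthm2}. That argument calls on Conjecture~\ref{conj2} at precisely two points. First, through Theorem~\ref{mainthm1}, it needs the best underapproximation $\{u_n\}_{n=1}^{\infty}$ of $\frac{p}{q}$ to be eventually Sylvester, so that $\lim_{n\to\infty}u_n^{1/2^n}$ exists by Lemma~\ref{lemlimexist} and so that the ``$a_N<u_N$'' step together with Lemma~\ref{lemmono} can be run. Second, through Theorem~\ref{mainthm3}, it needs to construct an eventually Sylvester sequence $\{c_n\}_{n=1}^{\infty}$ with $\sum_i 1/c_i=\frac{p}{q}$, distinct from $\{u_n\}$ as a set, that dominates all sufficiently long prefix sums of the competitor $\{a_n\}$. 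Granting both, the conclusion is immediate from the machinery of Section~\ref{sec4}: the reasoning of Theorem~\ref{mainthm1} gives $\lim_n c_n^{1/2^n}<\lim_n u_n^{1/2^n}$, and $\liminf_n a_n^{1/2^n}\le\lim_n c_n^{1/2^n}$ closes the gap.

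The first point is already unconditional on the families in Corollary~\ref{corpq}: when $p\mid q+1$, or under Chu's condition, the best Egyptian underapproximation coincides with the greedy one, so Proposition~\ref{thmstructure} and Corollary~\ref{lemfinalsylvester} force $\{u_n\}$ to be eventually Sylvester with nothing further assumed, and Proposition~\ref{thmstructure} also puts $\{u_n\}$ in the shape $\frac{p}{q}=\sum_{i=1}^{N-1}\frac1{u_i}+\frac1{u_N-1}$ with $\{u_n\}_{n\ge N}$ the greedy underapproximation of the unit fraction $\frac1{u_N-1}$. For a general rational with unique best underapproximations one would instead have to prove directly that its best underapproximation eventually satisfies $u_{n+1}=u_n^2-u_n+1$; this is a priori weaker than Conjecture~\ref{conj2} (it suffices that the residuals $\frac{p}{q}-\sum_{i<n}1/u_i$ eventually become unit fractions), but it appears essentially as hard, and I would isolate it as a separate sub-problem. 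So the honest first target is Conjecture~\ref{conj3} restricted to the classes of Corollary~\ref{corpq}, where only the construction is missing.

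For the construction I would imitate Section~\ref{sectionconstruction}. Let $m$ be the first index at which $\{a_n\}$ departs from $\{u_n\}$; Proposition~\ref{lem:Nathanson} gives $a_i=u_i$ for $i<m$ and $a_m>u_m$. If $m\ge N$, the residual $\frac{p}{q}-\sum_{i<m}1/u_i$ is the single unit fraction $\frac1{u_m-1}$, and the construction of Section~\ref{sectionconstruction}, Claims~\ref{claim1} and~\ref{claim2}, and Lemma~\ref{lemmacomstructc} carry over essentially verbatim, since they use only the recurrence $u_{m+1}=u_m^2-u_m+1$ and the bound $u_m\ge 3$. The main obstacle is the early-deviation case $m<N$: then the residual $\sum_{i=m}^{N-1}\frac1{u_i}+\frac1{u_N-1}$ is a sum of several unit fractions, and the identity $\sum_{i<m}1/u_i+1/(u_m-1)=\frac{p}{q}$ that drove the $\lambda=1$ argument is unavailable. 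The natural remedy is to keep $c_i=u_i$ as long as the domination $\sum_{i=m}^{k}1/a_i\le\sum_{i=m}^{k}1/u_i$ holds, thereby postponing the deviation of $\{c_n\}$ until the running residual has shrunk to a single unit fraction $\frac1T$ — which it does by step $N$ at the latest — and only then performing the Section~\ref{sectionconstruction} split on $\frac1T$. Making this rigorous forces one to understand the best underapproximations of the tail rationals $\frac{p}{q}-\sum_{i<k}1/u_i$ — one needs their greedy underapproximation to again be best, so that $\sum 1/a_i\le\sum 1/u_i$ is preserved term by term up to the deviation point — which is exactly the content of Conjecture~\ref{conj2} for these particular numbers; circumventing it, perhaps by a cleverer choice of where and how $\{c_n\}$ deviates, is where I expect the real difficulty to lie. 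Once $\{c_n\}$ is produced, the prefix domination $\sum_{i\le k}1/a_i\le\sum_{i\le k}1/c_i$ for all large $k$ follows as in the proof of Theorem~\ref{mainthm0} via Proposition~\ref{lemNathanson}, and the conjecture follows as above.
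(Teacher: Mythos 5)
You have not proved the statement, and to be fair, neither does the paper: Conjecture~\ref{conj3} is stated there as an open problem. The paper only establishes it \emph{conditionally} on Conjecture~\ref{conj2} (that is Theorem~\ref{mainthm2}, via Theorems~\ref{mainthm1} and~\ref{mainthm3}), and it remarks without proof that the special classes of Corollary~\ref{corpq} can be handled by a constructive argument in the spirit of Section~\ref{sectionconstruction} using Nathanson's Theorem~1 and Chu's Proposition~1.10. Your proposal essentially reconstructs this state of affairs rather than advancing past it: you correctly identify the two places where Conjecture~\ref{conj2} is invoked (the eventually Sylvester structure of $\{u_n\}$, and the prefix domination $\sum_{i\le k}1/a_i\le\sum_{i\le k}1/c_i$ needed for the constructed competitor), but you do not supply an unconditional substitute for either.

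The concrete gaps are the ones you yourself flag. First, for a general rational $\lambda$ with unique best underapproximations you have no argument that $\{u_n\}$ is eventually Sylvester, i.e.\ that the residuals $\lambda-\sum_{i<n}1/u_i$ eventually become unit fractions; without this, Lemma~\ref{lemlimexist} and Lemma~\ref{lemmono} cannot even be set up, so the general statement is untouched. Second, even in the restricted classes where greedy equals best, your construction of $\{c_n\}$ breaks in the early-deviation case $m<N$: postponing the split until the residual is a unit fraction requires the term-by-term domination $\sum_{i=m}^{k}1/a_i\le\sum_{i=m}^{k}1/u_i$ for the tail rationals, and you concede that justifying this is ``exactly the content of Conjecture~\ref{conj2} for these particular numbers,'' with the circumvention left as the place ``where I expect the real difficulty to lie.'' A plan whose crucial step is an acknowledged open difficulty is a research programme, not a proof; at best it sketches a plausible route to the special cases the paper already claims (and likewise omits), and it offers no new idea for the general conjecture.
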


On the other hand, several special cases of Conjecture~\ref{conj3} are relatively easy to resolve. For instance, when \( \frac{p}{q} \) satisfies either condition~(1) or (2) in Corollary~\ref{corpq}, Conjecture~\ref{conj3} can be established using a constructive method similar to that in Section~\ref{sectionconstruction}, by applying the results of Nathanson \cite[Theorem 1]{Nat23} and Chu \cite[Proposition 1.10]{Chu23}. However, as the construction is rather lengthy, we omit the detailed proof in this paper.

Characterizing such values of \( \frac{p}{q} \) in Conjecture~\ref{conj3} appears to be the central difficulty. This motivates the following natural problem:

\begin{problem}\label{problem1}
Characterize all rational numbers \( 0 < \frac{p}{q} \leq 1 \) for which the best \( m \)-term Egyptian underapproximation is unique for every positive integer \( m \).
\end{problem}

We remark that Nathanson \cite{Nat23} and Chu \cite{Chu23} only provided some sufficient conditions for Problem~\ref{problem1}, under which the best \( m \)-term Egyptian underapproximation also coincides with the greedy \( m \)-term Egyptian underapproximation.

Finally, confirming the validity of Conjecture~\ref{conj2} would also provide a promising route toward resolving Conjecture~\ref{conj3}:

\begin{problem}\label{problem2}
Prove or disprove Conjecture~\ref{conj2}.
\end{problem}


\begin{thebibliography}{99}

\bibitem{Badea1993} C. Badea, 
A theorem on irrationality of infinite series and applications, \textit{Acta Arithmetica}, 63(4):313–323, 1993.

\bibitem{Badea1995} C. Badea, 
On some criteria of irrationality for series of positive rationals: A survey, \textit{Actes de rencontres Arithm\'{e}tiques de Caen}, 2–15, 1995.

\bibitem{EP} T. F. Bloom, 
Erd\H{o}s problems, Available at: \url{https://www.erdosproblems.com/}, Accessed: March 15, 2025.

\bibitem{BE22} T. F. Bloom and C. Elsholtz, 
Egyptian fractions, \textit{Nieuw Arch. Wiskd. (5)}, 23(4):237–245, 2022.

\bibitem{chentoufsylvester} A. A. Chentouf, 
On Sylvester’s sequence and some of its properties, \textit{Parabola}, 56(2), 2020.

\bibitem{Chu23} H. V. Chu, 
A threshold for the best two-term underapproximation by Egyptian fractions, 
\textit{Indag. Math. (N.S.)}, 35(2):350–375, 2024.

\bibitem{chun2011egyptian} J. H. Chun, 
Egyptian fractions, Sylvester’s sequence, and the Erd\H{o}s-Straus conjecture, \textit{History}, 2(1):6, 2011.

\bibitem{EG80} P. Erd\H{o}s and R. L. Graham, 
\textit{Old and new problems and results in combinatorial number theory}, Vol. 28 of \textit{Monographies de L’Enseignement Math\'{e}matique}, Universit\'{e} de Gen\`{e}ve, L’Enseignement Math\'{e}matique, 1980.

\bibitem{Finch2003} S. R. Finch, 
\textit{Mathematical Constants}, Cambridge University Press, 2003.

\bibitem{Gra13} R. L. Graham, 
Paul Erd\H{o}s and Egyptian fractions, In \textit{Erd\H{o}s Centennial}, Vol. 25 of \textit{Bolyai Soc. Math. Stud.}, pages 289–309, 
J\'{a}nos Bolyai Math. Soc., Budapest, 2013.

\bibitem{Kamio2025} Y. Kamio, 
Asymptotic analysis of infinite decompositions of a unit fraction into unit fractions, \textit{arXiv preprint}, arXiv:2503.02317, 2025.

\bibitem{Kovac2025} V. Kova\v{c}, 
On eventually greedy best underapproximations by Egyptian fractions, \textit{Journal of Number Theory}, 268:39–48, 2025.

\bibitem{Nat23} M. B. Nathanson, 
Underapproximation by Egyptian fractions, \textit{J. Number Theory}, 242:208–234, 2023.

\bibitem{OEIS} N. J. A. Sloane et al., 
On-line encyclopedia of integer sequences, Available at \url{https://oeis.org}.

\bibitem{Sou05} K. Soundararajan, 
Approximating 1 from below using \( n \) Egyptian fractions, \textit{arXiv preprint}, arXiv:math/0502247, 2005.


\bibitem{Sylvester1880} J. J. Sylvester, 
On a point in the theory of vulgar fractions, \textit{Am. J. Math.}, 3(4):332–335, 1880.

\bibitem{Wagon1999} S. Wagon and S. Wagon, 
\textit{Mathematica in Action}, 2nd edition, Springer Science \& Business Media, 1999.

\end{thebibliography}
\end{document}